%
%

\documentclass[twoside]{amsart}

\numberwithin{equation}{section}
\usepackage{amsmath,amssymb,amsfonts,amsthm,latexsym}
\usepackage{graphicx,color,enumerate}
\usepackage[all]{xy}

\newtheorem{theorem}{Theorem}[section]
\newtheorem{lemma}[theorem]{Lemma}
\newtheorem{proposition}[theorem]{Proposition}
\newtheorem{corollary}[theorem]{Corollary}
\newtheorem{conjecture}[theorem]{Conjecture}

\theoremstyle{definition}
\newtheorem{definition}[theorem]{Definition}

\theoremstyle{remark}
\newtheorem{remark}[theorem]{Remark}

\numberwithin{equation}{section}

\newcommand{\C}{\mathbb{C}}
\newcommand{\Sp}{\mathbb{S}}
\newcommand{\Q}{\mathbb{Q}}
\newcommand{\R}{\mathbb{R}}
\newcommand{\T}{\mathbb{T}}

\newcommand{\Z}{\mathbb{Z}}
\newcommand{\N}{\mathbb{N}}
\newcommand{\K}{\mathbb{K}}
\newcommand{\U}{\operatorname{U}}
\newcommand{\SU}{\operatorname{SU}}

\newcommand{\lcm}{\operatorname{lcm}}

\newcommand{\Rank}{\operatorname{rank}}
\newcommand{\diag}{\operatorname{diag}}
\newcommand{\co}{\colon\thinspace}

\newcommand{\bs}{\boldsymbol}
\newcommand{\cc} [1] {\overline {{#1}}}

\newcommand{\Imaginary}{\operatorname{Im}}
\newcommand{\Hilb}{\operatorname{Hilb}}
\newcommand{\on}{\mathit{on}}
\newcommand{\off}{\mathit{off}}

\newcommand{\calC}{\mathcal{C}}

\bibliographystyle{apalike}

\begin{document}

\title{The Hilbert series of a linear symplectic circle quotient}

\author{Hans-Christian Herbig}
\address{Department of Mathematics, University of Vienna, Austria} \email{herbig@imf.au.dk}

\author{Christopher Seaton}
\address{Department of Mathematics and Computer Science,
Rhodes College, 2000 N. Parkway, Memphis, TN 38112}
\email{seatonc@rhodes.edu}

\thanks{The research was supported by the \emph{Centre for the Quantum Geometry of Moduli Spaces},
which is funded by the Danish National Research Foundation. In addition, HCH was supported by the
Austrian Ministry of Science and Research BMWF, Start-Prize Y377, and CS was supported by
a Rhodes College Faculty Development Grant.}
\keywords{singular symplectic reduction, invariant theory, circle actions}
\subjclass[2010]{Primary 53D20, 13A50; Secondary 11F20}

\begin{abstract}
We compute the Hilbert series of the graded algebra of regular
functions on a symplectic quotient of a unitary circle
representation.  Additionally, we elaborate explicit formulas for the lowest
coefficients of the Laurent expansion of such a Hilbert series
in terms of rational symmetric functions of the weights.
Considerable efforts are devoted to including the cases where
the weights are degenerate.  We find that these Laurent
expansions formally resemble Laurent expansions of Hilbert
series of graded rings of real invariants of finite subgroups
of $\U_n$.  Moreover, we prove that certain Laurent
coefficients are strictly positive.  Experimental observations
are presented concerning the behavior of these coefficients as
well as relations among higher coefficients, providing
empirical evidence that these relations hold in general.
\end{abstract}

\maketitle

\tableofcontents


\section{Introduction}
\label{sec:Intro}

This  work is part of a project that attempts to elucidate under
which conditions the symplectic quotient $M_0$ (for details see
Section \ref{sec:Background}) of a unitary representation $V$
of a compact Lie group $G$ is graded-regularly symplectomorphic to a
quotient $\C^n/\Gamma$ of a finite subgroup $\Gamma$ of $\U_n$.
The notion of a \emph{regular} (respectively \emph{graded regular}) \emph{symplectomorphism} has been
introduced in \cite{FarHerSea}. Roughly speaking, a graded
regular symplectomorphism is an isomorphism of the Poisson
algebras of smooth function $\calC^\infty(M_0)\to
\calC^\infty(\C^n/\Gamma)$ that comes from lifting an
isomorphism of the $\Z$-graded Poisson algebras of regular functions
$\R[M_0]\to \R[\C^n/\Gamma]$ and satisfies suitable
semi-algebraic conditions (cf. \cite[Subsection
4.2]{FarHerSea}). So if the graded Poisson algebras $\R[M_0]$ are
$\R[\C^n/\Gamma]$ are not isomorphic,  $M_0$ and $\C^n/\Gamma$
cannot be graded-regularly symplectomorphic.  Unfortunately, it is practically
impossible to determine the affine Poisson algebras  $\R[M_0]$ and $\R[\C^n/\Gamma]$
completely for most representations. Hence, to tell $M_0$ apart from $\C^n/\Gamma$, one
needs to consider quantities that are more amenable to
computations. Our basic idea is to compare the Hilbert series of the $\Z$-graded
algebras $\R[M_0]$ and $\R[\C^n/\Gamma]$.
At the time of this writing, it is unclear whether the existence of a regular symplectomorphism between $\R[M_0]$ and $\R[\C^n/\Gamma]$ implies the existence
of a graded regular symplectomorphism between the two Poisson algebras.

In this paper, we focus on the computation of the Hilbert
series associated to unitary representations of the circle
$\Sp^1$. We discover that the Laurent expansions of the Hilbert
series of the graded rings $\R[M_0]$ and $\R[\C^n/\Gamma]$
exhibit formal similarities. Namely, in Theorems
\ref{thrm:Laurent} and \ref{thrm:GFinLaurent}, we determine the
first four Laurent coefficients of the respective Hilbert
series and find that they fulfill the same constraints.
We present empirical evidence that these constraints are the first three elements
of a sequence of linear relations satisfied by the Laurent expansions
associated to symplectic quotients of an arbitrary unitary representation; see Conjecture
\ref{conj:SymplecticTypeCompact}.
Note that  for symplectic reduction over the field $\C$ with respect to a linear cotangent lifted $\C^\times$-action, there
is an  analogue of Theorem \ref{thrm:Laurent}. Similarly, there is an analogue of Theorem \ref{thrm:GFinLaurent} for finite
subgroups of symplectic group $\operatorname{Sp}_n(\C)$. The
reader is invited to spell out the details.

As a corollary of our computations of the Laurent coefficients,
we conclude that in order for $\R[M_0]$ and
$\R[\C^n/\Gamma]$ to be isomorphic, the weights of the circle
representation have to satisfy a certain Diophantine condition
(cf. Equation \eqref{Diophant}).   We would like to stress,
however, that this condition does not lead us to any new
orbifold examples. In a forthcoming paper, we will use the
results presented here, in particular Corollary
\ref{cor:LaurentPositive}, to demonstrate that a symplectic
quotient $M_0$ of dimension $>2$ of a unitary circle
representation $V$ with $V^{\Sp^1}=\{0\}$ cannot be $\Z$-graded
regularly symplectomorphic (or $\Z$-graded regularly diffeomorphic)
to a quotient $\C^n/\Gamma$ for some finite
subgroup $\Gamma\subset \U_n$.

A major challenge in the course of proving Theorem \ref{thrm:Laurent} is to incorporate the cases when the weights degenerate,
i.e. two or more weights have the same absolute value.
A crucial observation is that in the formulas for the Laurent coefficients in Lemma \ref{lem:LaurentC}, the apparent singularities
along the diagonals are actually removable. This leads to the expressions in Theorem \ref{thrm:Laurent} in terms of Schur
polynomials that make perfect sense for the degenerate case. The main purpose of the technology around Theorem
\ref{thrm:HilbSeriesGeneral} is to show that these expressions are actually the
Laurent coefficients of the Hilbert series of $\R[M_0]$.

Apart from topological conditions (see \cite[Subsection
2.2]{FarHerSea}), we do not know of any \emph{a priori} method
in order to tell apart symplectic quotients from quotients of
finite subgroups $\U_n$. In fact it seems that orbifold cases
appear as accidents, and one has to rely on quantitative,
computational means to identify them. Conversely, if one finds
a general property of quotients of finite subgroups
$\Gamma\subset \U_n$, it is expected that the same property
holds for symplectic quotients, possibly under suitable
topological assumptions. This idea leads to Conjecture
\ref{conj:SymplecticTypeCompact},
which is explained and substantiated with sample calculations
in Section \ref{sec:HigherRelat}.

The outline of this paper is as follows. We start by reviewing some background material on invariant theory and symplectic reduction
for unitary representations of compact Lie groups in Section \ref{sec:Background}. Section \ref{sec:HilbSeries} is devoted to the
proof of Theorem \ref{thrm:HilbSeriesGeneral}, an expression for Hilbert series of symplectic circle quotients.  We
present a simple algorithm for the calculation of such a Hilbert series in Section \ref{sec:Algorithm}. In Section \ref{sec:Laurent}, we use
Theorem \ref{thrm:HilbSeriesGeneral} to calculate the lowest four Laurent coefficients of the Hilbert series of $\R[M_0]$
(see Theorem \ref{thrm:Laurent}).  To compare the Laurent coefficients to the case of finite groups, we calculate the first six Laurent
coefficients of the Hilbert series of $\R[\C^n/\Gamma]$ in Section \ref{sec:GFin}.
In Section \ref{sec:experiments}, we discuss Diophantine aspects of our findings and related experimental observations.
Section \ref{sec:HigherRelat} discusses our conjecture regarding relations among higher-degree Laurent coefficients for
the symplectic quotients considered here as well as more general symplectic quotients; this conjecture is justified
with experimental observations.

\section*{Acknowledgements}

We express our gratitude to Leonid Bedratyuk for assistance in computing invariants of $\operatorname{SL}_2$.
The research of the first author was supported by the \emph{Centre for the Quantum Geometry
of Moduli Spaces}, which is funded by the Danish National Research Foundation, and by the Austrian Ministry
of Science and Research BMWF, Start-Prize Y377.  The second author would like to thank the
\emph{Centre for the Quantum Geometry of Moduli Spaces} for hospitality
during the completion of this manuscript.

\section{Background}
\label{sec:Background}


Suppose $G\to \operatorname{U}(V)$ is a unitary representation
of the compact Lie group $G$ on a finite dimensional complex vector
space $V$ with Hermitian scalar product $\langle\:,\:\rangle$; occasionaly it will be convenient to use the shorthand $G:V$ to refer to this situation.
By convention, $\langle\:,\:\rangle$ is complex antilinear in the first
argument.
Note that we can make any symplectic representation of $G$
unitary by using an invariant compatible complex structure. Let $\cc V$ be the complex conjugate vector space of $V$.
The identity map on $V$ induces a complex antilinear map ${}^{-}\co V\to
\cc V$, $v\mapsto \cc v$. The complex conjugation ${}^{-}$ extends
to a real structure on the algebra $\mathbb C[V\times \cc V]$, and
the ring of \emph{real regular functions on} $V$ is defined to be
the subring of invariants with respect to ${}^{-}$, i.e.
$\mathbb R[V]:=\mathbb C[V\times \cc V]^-$.
It is isomorphic to the $\R$-algebra of regular functions on the
real vector space $V_{\mathbb R}$ underlying $V$. The group $G$ acts on $\cc V$ by $\cc v\mapsto (g^{-1})^t\cc v$.
Letting $G$ act on $V\times \cc V$ diagonally, and observing that
this action commutes with ${}^{-}$, we obtain an action of $G$ on
$\mathbb R[V]$ by $\mathbb R$-algebra automorphisms. This action
can be seen as coming from the  obvious $\mathbb R$-linear
$G$-action on $V_\mathbb R$. By the theorem of Hilbert and Weyl, $\mathbb R[V]^G$ is a $\N$-graded Noetherian $\mathbb R$-algebra, and we can find a \emph{Hilbert
basis}, i.e. a complete system of homogeneous polynomial invariants, $\rho_1,\dots,\rho_k \in \mathbb R[V]^G$. Note that $v\mapsto \langle v,v\rangle$ is
always a quadratic invariant.

Infinitesimally, the data of our representation are encoded by the \emph{moment map} $J$. This is the regular map from $V$ to the dual space $\mathfrak g^*$ of the Lie algebra $\mathfrak g$ of $G$ whose image of the point $v\in V$ is determined by
\[J_{\xi}(v):=J(v)(\xi)=\frac{\sqrt{-1}}{2}\langle v,\xi.v\rangle \qquad \forall  \xi\in \mathfrak g.\]
Here $\xi.v=d/dt_{|t=0}\exp(-t\xi).v$ stands for the infinitesimal action of $\xi$ on $v$.
For each $\xi\in \mathfrak g$, $J_{\xi}(v)\in \R[V]$ is  homogeneous quadratic.
Recall that the K{\"a}hler form $\omega=\Imaginary \langle\:,\:\rangle$ is a non-degenerate real-valued two-form on $V_\R$. Its inverse, the Poisson tensor $\Pi$, is a real-valued two-form on $V_\R^*$. Extending it by Leibniz rule we obtain the \emph{Poisson bracket} $\{\:,\:\}$, making $\R[V]$ a Poisson algebra over $\R$. Note that the bracket is homogeneous of degree $-2$. The fundamental vector field corresponding to $\xi\in \mathfrak g$ is the Hamiltonian vector field $\{J_\xi,\:\}$. Note also that $J:V\to \mathfrak g^*$ is $G$-equivariant and that $\{J_{\xi},J_\eta\}=J_{[\xi,\eta]}$ for $\xi,\eta\in\mathfrak g$.

Throughout the paper, we will write $Z=J^{-1}(0)$ for the preimage of zero
via the moment map. Borrowing terminology from physics, we will
occasionally refer to $Z$ as the \emph{shell}. Due to the
equivariance of $J$, $Z$ is $G$-stable. The \emph{symplectic
quotient}
\[M_0=Z/G\]
is the primary object of our investigation. For the ideal of
$Z$ in $\R[V]$ we write $I_Z$, while for the ideal
$\langle J_\xi|\xi\in\mathfrak g \rangle\subset \R[V]$ we write $I_J$.
If $Z$ is coherent, the inclusion $I_J\subset I_Z$ becomes an
equality; for more information, we refer the reader to \cite{HerbigSchwarz}.
The \emph{Poisson algebra of regular functions on $M_0$}, denoted $\R[M_0]$, is $\R[V]^G/I_Z^G$,
where $I_Z^G:=I_Z \cap \R[V]^G$ is the invariant part of the
vanishing ideal.

Recall that, given an $\N$-graded locally finite dimensional vector space $X=\bigoplus_{i\ge 0} X_i$ over the field $\K$, there is a generating function
\[\Hilb_{X|\K}(x)=\sum_{i\ge 0} \dim_\K(X_i)\:x^i\quad \in \Z[\![x]\!]\]
for the dimensions of the graded components $X_i$ of $X$. We will
refer to it as the \emph{Hilbert series} of $X$.
It will be convenient  to call elements of the ring $\R[V]^G$
\emph{off-shell invariants}. The Hilbert series of the graded ring $\R[V]^G$ will be referred to as the
\emph{off-shell Hilbert series} and we write
\[\Hilb_{G:V}^{\off}(x):=\Hilb _{\R[V]^G|\R}(x).\]
By the ring of \emph{on-shell invariants}, we mean $\R[V]^G/I_J^G$, where
\[
    I_J^G:=I_J \cap \R[V]^G
\]
is the invariant part of $I_J$. Note that if $G$ is abelian, $I_J^G$ is actually
the ideal generated by the components of the moment map over $\R[V]^G$.
By the \emph{on-shell} Hilbert series, we mean
\[
    \Hilb_{G:V}^{\on}(x):= \Hilb _{\R[V]^G/I_J^G|\R}(x).
\]

We now turn our attention to the case when $G$ is a torus $\T^\ell=(\Sp^1)^\ell$.   We identify the Lie
algebra $\mathfrak g$ of $G= \T^\ell$ with $\mathbb R^n$ by writing
an arbitrary element $(t_1,\dots,t_\ell)\in G= \T^\ell$ in the form
$t_i=\exp(2\pi\sqrt{-1}\xi_i)$ for the  vector $(\xi^1, \ldots, \xi^\ell)\in
\mathfrak g=\mathbb R^n$. We identify $V$ with $\C^n$ by choosing  coordinates $z_1,\dots,z_n$. The data of the representation are encoded by the \emph{weight matrix}
$A = (a_{ij}) \in \Z^{\ell \times n}$. Setting $(\eta_1, \ldots, \eta_n):= (\xi_1,
\ldots, \xi_\ell)\cdot A\in \mathbb R^n$, the $G= \T^\ell$-action
corresponding to the weight matrix $A$ is given by the formula
\begin{equation}
\label{eq-IntroLieAlgAction}
(t_1, \ldots, t_\ell). (z_1, \ldots, z_n)
    = (\exp(2\pi \sqrt{-1} \eta_1)z_1, \ldots , \exp(2\pi \sqrt{-1} \eta_n)z_n).
\end{equation}
The components $J_i$ of the moment map
$J=(J_1,\dots,J_n)\co \C^n \to \R^\ell \cong \mathfrak g^*$ can also be
expressed in terms of the weight matrix,
\begin{align}
    J_i(\bs z, \cc{\bs z}) =\frac{1}{2} \sum\limits_{j=1}^n a_{ij} z_j \cc z_j, \quad i = 1, \ldots, \ell,
        \quad   {\bs z} = (z_1, \ldots, z_n).
\end{align}
In the case of a torus representation, we use the notation $\Hilb_{A}^{\off}(x):=\Hilb_{G:V}^{\off}(x)$ and $\Hilb_{A}^{\on}(x):=\Hilb_{G:V}^{\on}(x)$.

\begin{proposition} If $A\in \Z^{\ell\times n}$ is a weight matrix
of $\Rank(A)=\ell$, then $\Hilb_A^{\on}(x)=(1-x^2)^\ell \Hilb_A^{\off}(x)$.
\end{proposition}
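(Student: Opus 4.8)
The plan is to show that the moment map components $J_1,\dots,J_\ell$ form a regular sequence in $\R[V]^G$, so that the Hilbert series of the quotient $\R[V]^G/I_J^G$ is obtained from $\Hilb_A^{\off}(x)$ by multiplication by $(1-x^2)^\ell$, reflecting the fact that each $J_i$ is homogeneous of degree $2$. Since $G=\T^\ell$ is abelian, the earlier remark in the excerpt tells us that $I_J^G$ is the ideal of $\R[V]^G$ generated by $J_1,\dots,J_\ell$, so there is no subtlety distinguishing ``invariant part of the ideal generated by the $J_i$'' from ``ideal generated by the $J_i$ in the invariant ring,'' and it suffices to work entirely inside the Noetherian graded $\R$-algebra $R:=\R[V]^G$.

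First I would recall the standard fact that for a finitely generated $\N$-graded algebra $R$ over a field, a homogeneous sequence $f_1,\dots,f_m$ of positive degrees $d_1,\dots,d_m$ is a regular sequence if and only if $\dim R/(f_1,\dots,f_m) = \dim R - m$ (equivalently, $\operatorname{depth}$ drops by exactly $m$ when $R$ is Cohen--Macaulay, but the dimension-count characterization is all that is needed), and that in this case
\[
\Hilb_{R/(f_1,\dots,f_m)}(x) = \Hilb_R(x)\,\prod_{i=1}^m (1 - x^{d_i}).
\]
Here all $d_i = 2$ and $m = \ell$, so the factor is $(1-x^2)^\ell$. Thus the whole proposition reduces to the single claim that $\dim\bigl(\R[V]^G/I_J^G\bigr) = \dim \R[V]^G - \ell$.

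For the dimension count, I would argue on the geometric side: $\R[V]^G$ is the coordinate ring of $V_\R\git G$, which has dimension $\dim_\R V - \ell = 2n - \ell$ because $\Rank(A)=\ell$ forces the $G$-action to have $\ell$-dimensional generic orbits (the weight matrix having full rank means the stabilizer of a generic point is finite). The real algebraic set $Z = J^{-1}(0)$ is cut out inside $V_\R$ by the $\ell$ real quadrics $J_1,\dots,J_\ell$; because these are the components of the moment map of a torus action with full-rank weight matrix, $0$ is a regular value of $J$ on the open dense set of points with finite stabilizer, so $Z$ has (real) codimension exactly $\ell$ in $V_\R$ there, i.e. $\dim Z = 2n - \ell$, and hence $\dim M_0 = \dim(Z/G) = 2n - 2\ell = \dim \R[V]^G - \ell$. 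Since $\R[V]^G/I_J^G$ has the same Krull dimension as $\R[M_0]=\R[V]^G/I_Z^G$ (the two ideals have the same radical, as $I_J\subseteq I_Z$ and, $Z$ being coherent here, in fact $I_J = I_Z$; in any case $\sqrt{I_J}=\sqrt{I_Z}$ suffices for a dimension statement), this gives the required equality $\dim(\R[V]^G/I_J^G) = 2n - 2\ell = \ell$ less than $\dim\R[V]^G$.

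The main obstacle I anticipate is making the codimension/dimension bookkeeping airtight across the passage between the linear space $V_\R$, the non-free quotient $V_\R\git G$, and the possibly singular $Z$: one must be careful that ``generic orbits are $\ell$-dimensional'' genuinely yields both $\dim \R[V]^G = 2n-\ell$ and $\dim Z = 2n - \ell$, handling the locus of positive-dimensional stabilizers (which can only occur when some coordinate subtorus acts trivially on a coordinate subspace) by noting it is a proper closed subset and so does not affect the dimension. An alternative, perhaps cleaner, route that avoids some of this is to invoke Kirwan-type surjectivity or the known structure of torus moment maps to see directly that $J_1,\dots,J_\ell$ restrict to a regular sequence, or to reduce to the case $\ell = 1$ by induction on the torus rank, splitting off one circle factor at a time and checking that a single degree-$2$ moment-map component is a nonzerodivisor on each successive quotient ring; the inductive step again comes down to the same dimension drop by $1$, which is where the real work lies.
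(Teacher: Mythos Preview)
Your approach is the same as the paper's: the paper's proof is a single sentence stating that the moment map cuts out a complete intersection of quadrics of codimension $\ell$ in $\R[V]^G$, with the details deferred to \cite{HerbigIyengarPflaum}. You are essentially trying to supply those details via a Krull-dimension count.

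There is one genuine gap in your write-up. The ``standard fact'' you invoke---that a homogeneous sequence $f_1,\dots,f_m$ in a finitely generated graded algebra $R$ is regular if and only if $\dim R/(f_1,\dots,f_m)=\dim R-m$---is \emph{false} without assuming $R$ is Cohen--Macaulay. (Take $R=k[x,y,z]/(xz,yz)$ and $f_1=x$: the dimension drops from $2$ to $1$, yet $x$ is a zerodivisor.) The parenthetical in which you mention Cohen--Macaulay and then say ``the dimension-count characterization is all that is needed'' gets this backwards: the Cohen--Macaulay hypothesis is precisely what is needed to convert a codimension statement into regularity. The fix is immediate---$\R[V]^{\T^\ell}$ is Cohen--Macaulay by Hochster--Roberts---but you must say so.

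A second, more minor issue: your dimension computation for $\R[V]^G/I_J^G$ routes through the \emph{real} zero set $Z=J^{-1}(0)\subset V_\R$ and differential-topological regularity of the moment map. Krull dimension is governed by the complexified scheme $V_\C(I_J)\subset\operatorname{Spec}\C[V\times\overline V]$, and over $\R$ the passage from ``same real zero set'' to ``same radical'' is not the usual Nullstellensatz. It is cleaner either to argue directly that $J_1,\dots,J_\ell$ is a regular sequence already in the polynomial ring $\R[V]$ (where the algebraic codimension count is straightforward, since $\Rank A=\ell$ makes the $J_i$ algebraically independent quadrics) and then descend to invariants, or simply to cite the reference the paper does.
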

\begin{proof}
This follows from the observation that the moment map cuts out
a complete intersection of quadrics of codimension $\ell$ from
$\R[V]^G$ (cf. \cite{HerbigIyengarPflaum}).
\end{proof}

In the case of nonabelian $G$, one cannot expect such a nice relationship between on-and off-shell Hilbert series.

With the exception of Section \ref{sec:HigherRelat}, we will assume for the remainder of the paper
that $G = \Sp^1$.  Here, for simplicity, we will talk of a \emph{weight vector}
and write $A=(a_1,\dots,a_n)\in \Z^n$.  We observe that as the weight matrix of the action of $\Sp^1$ on $V\times \cc V$ is given by
$(a_1, \ldots, a_n, -a_1, \ldots, -a_n)$, $\Hilb_{A}^{\off}(x)$ and $\Hilb_{A}^{\on}(x)$ do not depend on
the signs of the weights $a_i$. For $G=\Sp^1$ we have $I_J=I_Z$
if and only if not all weights have the same sign (cf. \cite{ArmsGotayJennings, HerbigIyengarPflaum}).
Note that if all weights do have the same sign, then $Z$ is the origin and the symplectic quotient is a point.  Note further that if more than one weight is positive and more than one weight is negative,
then the symplectic quotient is not a rational homology manifold; see \cite{HerbigIyengarPflaum, FarHerSea}.

\begin{definition}
We say that the weight vector $A=(a_1,a_2,\dots, a_n)\in\Z^ n$ is \emph{generic}
if $|a_i|\ne|a_j|$ for all $i\ne j$. Otherwise, $A$ is \emph{degenerate}.
\end{definition}

It will sometimes be convenient to assume without loss of generality that
our circle action is effective, i.e. that $\gcd(a_1,\dots,a_n)=1$.
Similarly, we occasionally assume that there are no nontrivial invariants,
i.e. the weights $a_i$ are nonzero. As the signs of the weights do not
affect the Hilbert series, we also occasionally assume that the weights are non-negative.

\section{The Hilbert series of a symplectic circle quotient}
\label{sec:HilbSeries}


The aim in this section is to prove the following.

\begin{theorem}
\label{thrm:HilbSeriesGeneral}
Let $A = (a_1, \ldots, a_n)\in \Z^n$ be a nonzero weight vector for a $\Sp^1$-action on $\C^n$. The on-shell
Hilbert series (as a meromorphic function in $x$) is given by
\begin{equation}
\label{eq:HilbSeriesGeneral}
    \Hilb_A^{\scriptsize\mbox{on}}(x)
    =
    \lim\limits_{C\to A}
    \sum\limits_{i=1}^n
    \sum\limits_{\zeta^{a_i}=1}
    \frac{1}
        {c_i
        \prod\limits_{\substack{j=1 \\ j \neq i}}^n
        (1 - \zeta^{a_j} x^{(c_i + c_j)/c_i}) (1 - \zeta^{-a_j} x^{(c_i - c_j)/c_i})}.
\end{equation}
Here $C=(c_1,\dots,c_n)\in \R^n$ is assumed to be such that $|c_i|\ne |c_j|$ for $i\ne j$.
\end{theorem}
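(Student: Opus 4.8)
The natural starting point is the case where $A$ itself is generic, i.e. $|a_i| \neq |a_j|$ for $i \neq j$; in that case the limit is trivial (take $C = A$) and the claim reduces to a residue/fixed-point computation. The plan is to compute $\Hilb_A^{\mathrm{off}}(x)$ via a Molien-type integral
\[
    \Hilb_A^{\mathrm{off}}(x) = \frac{1}{2\pi i}\oint_{|t|=1} \frac{dt}{t}\prod_{j=1}^n \frac{1}{(1-t^{a_j}x)(1-t^{-a_j}x)},
\]
which is the $\Sp^1$-average of the character of $\R[V] = \C[V \oplus \cc V]$. One then evaluates this contour integral by residues. For $x$ in a suitable range, the poles of the integrand inside the unit circle are at the roots $t$ of $t^{a_j} = x$ or $t^{a_j} = x^{-1}$... more carefully, one deforms so that exactly the poles coming from the factors $(1 - t^{-a_j}x)$ contribute (the ones with $t^{a_j} = x$, of which there are $|a_j|$, spread on a circle of radius $|x|^{1/|a_j|}<1$). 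Summing these residues and simplifying the resulting products, each pole at $t = \zeta x^{1/a_i}$ with $\zeta^{a_i}=1$ contributes a term which, after substituting and clearing, becomes exactly the $i$-th summand of \eqref{eq:HilbSeriesGeneral} with $c_i = a_i$. Multiplying by $(1-x^2)$ (Proposition, since $\ell = 1$ and $A \neq 0$ forces rank $1$) converts off-shell to on-shell; one checks the extra factor $(1-x^2)$ is absorbed into the $j=i$ ``missing'' factor of the product, accounting for the $c_i$ in the denominator and the exclusion $j \neq i$. This disposes of the generic case.

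\textbf{The degenerate case via a limit argument.} For general (possibly degenerate) $A$, the right-hand side of \eqref{eq:HilbSeriesGeneral} is defined as a limit over generic $C \to A$. The key claims to establish are: (i) the limit exists, i.e. the apparent poles of the summand as a function of $C$ along the diagonals $|c_i| = |c_j|$ are removable when one sums over $i$ and over the roots $\zeta$; and (ii) the limiting value equals $\Hilb_A^{\mathrm{on}}(x)$. For (ii), I would argue by continuity/deformation: the Hilbert series $\Hilb_C^{\mathrm{on}}(x)$ is \emph{not} itself continuous in $C$ (the ring structure jumps), so one cannot simply pass to the limit on the left. Instead, the idea is to interpret the generic-case formula as an identity of rational functions valid for all generic $C \in \R^n$ (not just integer weight vectors — the residue computation goes through for real $c_i$ as long as they are pairwise distinct in absolute value, since only the \emph{structure} of which poles lie inside the contour matters, controlled by the signs and relative sizes), and then observe that this rational function of $x$ (with parameters $C$) extends continuously to $C = A$. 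One must then separately identify that continuous extension with $\Hilb_A^{\mathrm{on}}(x)$: this presumably requires an independent argument, e.g. showing both sides agree as formal power series by a direct (though more delicate) Molien computation in the degenerate case, where the integrand now has higher-order poles and residues involve derivatives.

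\textbf{Main obstacle.} The genuine difficulty is step (ii) in the degenerate case — proving that the limit of the generic formula actually \emph{is} the on-shell Hilbert series of the degenerate representation, rather than just some interpolating rational function. The residue computation that works cleanly for simple poles becomes a computation with poles of order equal to the multiplicity of a repeated weight, and one must show the resulting expression (involving derivatives of the remaining product) coincides with the symmetric limit of the generic formula. I expect the cleanest route is to keep the Molien integral as the common anchor: show (a) $\Hilb_A^{\mathrm{off}}(x) = \frac{1}{2\pi i}\oint \cdots$ always, by the Weyl integration formula, independent of genericity; and (b) that the residue evaluation of this same integral, organized as a limit of the generic residue sum, yields \eqref{eq:HilbSeriesGeneral} — essentially because a contour integral depends continuously on the position of its poles even as they collide, so the limit $C \to A$ of the generic residue sum computes the (higher-order) residues at the collided poles. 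The removability in (i) is then automatic from this, and the subtle bookkeeping is confined to verifying that the deformation of contours used in the residue computation is uniform as $C \to A$, so that no pole crosses the contour in the limit. Handling the precise range of $x$ for which the contour argument is valid, and then invoking meromorphic continuation, is a routine but necessary final step.
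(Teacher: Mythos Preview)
Your high-level plan matches the paper's: generic case via the Molien contour integral and residues (this is exactly Subsection~3.1), degenerate case by continuity of the contour integral as poles collide, with dominated convergence to justify the interchange of limit and integral. Where your sketch has a gap is in the choice of deformation parameter. You propose to let $C$ range over real vectors and assert that ``the residue computation goes through for real $c_i$'', but the Molien integrand would then involve $z^{c_i}$ with non-integer $c_i$, which is multi-valued on $|z|=1$; there is no well-defined contour integral to anchor to, and hence no direct way to identify the formal expression on the right of \eqref{eq:HilbSeriesGeneral} with a Hilbert series for generic real $C$.

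The paper's device avoids this: it keeps the integer exponents $a_i$ fixed and instead splits the variable $x$ into separate variables $x_1,\ldots,x_p$, one for each occurrence of a repeated weight $a$, producing a genuine meromorphic integrand
\[
F(\bs x,\bs y,z)=\frac{1}{z\prod_i(1-x_iz^{a})(1-x_iz^{-a})\prod_k(1-y_kz^{b_k})(1-y_kz^{-b_k})}
\]
with simple poles. Residues are computed as in the generic case; dominated convergence on the circle $|z|=1$ (the integrand is uniformly bounded for $|x_i|,|y_k|\le 1-\epsilon/2$) gives $\lim_{\bs x,\bs y\to\Delta x}\int F=\int\lim F=\Hilb_A^{\off}(x)$. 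Only then does a substitution $x_i=x^{t_i}$, $c_i=a/t_i$ reparametrize the residue sum into the $C$-form of the theorem. So the $C$-limit is not tied to a family of Molien integrals but is a reparametrization of the $x_i$-limit after the fact.

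One further point: the paper proves a sharper removability statement (Lemma~3.4) than is strictly needed for Theorem~\ref{thrm:HilbSeriesGeneral}. For each fixed root of unity $\zeta_0$, the partial sum $\sum_i\operatorname{Res}_{z=\zeta_0 x_i^{1/a}}F$ has removable singularities along $x_i=x_j$, shown by observing that its numerator is alternating in the $x_i$ and hence divisible by the Vandermonde determinant. You are right that the bare existence of the full limit already follows from continuity of the contour integral, but this per-$\zeta_0$ removability (and the explicit quotient form it yields) is what makes the Laurent-coefficient computations of Section~5 go through, so it is not redundant in the paper's larger architecture.
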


Note that if $A$ is a \emph{generic} weight vector, then
the limit is unnecessary, and the formula for the Hilbert series is obtained simply by setting
$c_i = a_i$ for $i = 1,\ldots, n$; see Proposition \ref{prop:HilbSeriesGeneric}.
For an alternative expression of the Hilbert series in the general case,
see Proposition \ref{prop:HilbSeriesDegenDeriv}.

Throughout, we will assume that $x \neq 0$ for simplicity, though the extension
of our expressions for the Hilbert series to this value can be easily checked.


\subsection{The generic case}
\label{subsec:HilbSeriesGeneric}

Let $A = (a_1, \ldots, a_n)$ be a generic weight vector.  By Molien's formula
(see e.g. \cite{DerskenKemperBook}),
the off-shell Hilbert series associated to $A$ is given by the residue
\[
   \Hilb_A^{\off}(x)= \frac{1}{2\pi i}
    \int_{z \in \Sp^1} \frac{dz}{z\prod\limits_{i=1}^n (1 - xz^{a_i}) (1 - xz^{-a_i})},
    \quad\quad |x| < 1.
\]
Rewriting the integrand $F(x,z) = F_A(x,z)$ as
\[
    F(x,z)
    =
    \frac{z^{-1 + \sum_{i=1}^n a_i}}{\prod\limits_{i=1}^n (1 - xz^{a_i}) (z^{a_i} - x)},
\]
we see that for fixed nonzero $x \in \mathbb{D}^1$, the open unit disk,
the poles of $F(x,z)$ for $z \in \mathbb{D}^1$
occur when $z^{a_i} = x$.  Fix a branch of $x^z$ by choosing a branch of the logarithm in a neighborhood
of $x$ and let $R(a_i)$ denote the set of $a_i$th roots of unity.  Then these poles occur at
$z = \zeta x^{1/a_i}$ where $i=1, \ldots, n$ and $\zeta\in R(a_i)$.

To compute the residue at a specific pole $z = \zeta_0 x^{1/a_i}$ with $\zeta_0 \in R(a_i)$, we express
\[
    F(x,z)
    =
    \left(\frac{1}{z - \zeta_0x^{1/a_i}}\right)
    \frac{z^{a_i-1}}
        {(1 - xz^{a_i})
        \prod\limits_{\substack{\zeta^{a_i}=1 \\ \zeta\neq\zeta_0}}
        (z - \zeta x^{1/a_i})
        \prod\limits_{\substack{j=1 \\ j \neq i}}^n (1 - xz^{a_j}) (1 - xz^{-a_j})}
\]
to see that this function has a simple pole at $z = \zeta_0 x^{1/a_i}$.  It follows that
\[
    \operatorname{Res}_{z=\zeta_0 x^{1/a_i}} F(x,z)
    =
    \frac{1}
        {a_i (1 - x^2)
        \prod\limits_{\substack{j=1 \\ j \neq i}}^n
            (1 - \zeta_0^{a_j} x^{(a_i + a_j)/a_i})
            (1 - \zeta_0^{a_j} x^{(a_i - a_j)/a_i})},
\]
where we simplify using the identity
\[
    \prod\limits_{\substack{\zeta^{a_i}=1 \\ \zeta\neq\zeta_0}}
        (\zeta_0 x^{1/a_i} - \zeta x^{1/a_i})
    =
    \zeta_0^{a_i-1} x^{(a_i-1)/a_i} \prod\limits_{\substack{\zeta^{a_i}=1 \\ \zeta\neq 1}}
        (1 - \zeta)
    =
    \zeta_0^{a_i-1} x^{(a_i-1)/a_i} a_i.
\]

Summing over each pole of $F(x,z)$ in $\mathbb{D}^1$ yields the following.

\begin{proposition}
\label{prop:HilbSeriesGeneric}
Let $A = (a_1, \ldots, a_n)$ be a generic weight vector.
Then the on-shell Hilbert series is given by
\begin{equation}
\label{eq:HilbSeriesGenericOn}
    \Hilb_A^{\scriptsize\mbox{on}}(x)
    =
    \sum\limits_{i=1}^n
    \sum\limits_{\zeta^{a_i}=1}
    \frac{1}
    {a_i
        \prod\limits_{\substack{j=1 \\ j \neq i}}^n
        (1 - \zeta^{a_j} x^{(a_i + a_j)/a_i}) (1 - \zeta^{-a_j} x^{(a_i - a_j)/a_i})}.
\end{equation}
\end{proposition}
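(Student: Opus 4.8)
The plan is to read off \eqref{eq:HilbSeriesGenericOn} from the residue computations already carried out above, by applying the residue theorem to the Molien integral for $\Hilb_A^{\off}(x)$ and then passing to the on-shell series via the complete-intersection relation. Since the Hilbert series does not depend on the signs of the $a_i$, I would first assume $a_i>0$ for all $i$, and fix $x$ with $0<|x|<1$ together with a branch of the logarithm near $x$, so that all the fractional powers $x^{1/a_i}$ and $x^{(a_i\pm a_j)/a_i}$ appearing below are unambiguous.

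The first substantive step is to pin down the poles of $F(x,z)$ inside the open unit disk $\mathbb{D}^1$. From the form $F(x,z)=z^{-1+\sum_i a_i}\big/\prod_{i=1}^n(1-xz^{a_i})(z^{a_i}-x)$ one sees that $z^{-1+\sum_i a_i}$ contributes no pole (the exponent is $\ge n-1\ge 0$), that the zeros of $1-xz^{a_i}$ satisfy $|z|=|x|^{-1/a_i}>1$ and hence lie outside $\mathbb{D}^1$, and that the only poles in $\mathbb{D}^1$ are the points $z=\zeta x^{1/a_i}$ with $\zeta\in R(a_i)$, at which $z^{a_i}=x$. This is exactly where genericity enters: because $|x|\ne 1$, the numbers $|x|^{1/a_i}$ are pairwise distinct for distinct $a_i$, while for fixed $i$ the $a_i$-th roots of $x$ are already pairwise distinct, so all of these poles are simple and mutually distinct. (In the degenerate case they would collide, which is why Theorem \ref{thrm:HilbSeriesGeneral} must introduce the limit.)

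With the pole structure established, the residue theorem applied to the Molien integral gives $\Hilb_A^{\off}(x)=\sum_{i=1}^n\sum_{\zeta\in R(a_i)}\operatorname{Res}_{z=\zeta x^{1/a_i}}F(x,z)$, and each residue was already evaluated above, namely $\bigl(a_i(1-x^2)\bigr)^{-1}\prod_{j\ne i}(1-\zeta^{a_j}x^{(a_i+a_j)/a_i})^{-1}(1-\zeta^{-a_j}x^{(a_i-a_j)/a_i})^{-1}$. Finally, a nonzero weight vector has $\Rank(A)=1$, so the Proposition relating on- and off-shell Hilbert series in Section \ref{sec:Background} yields $\Hilb_A^{\on}(x)=(1-x^2)\Hilb_A^{\off}(x)$; multiplying through cancels the factor $1-x^2$ in each summand and produces precisely the right-hand side of \eqref{eq:HilbSeriesGenericOn}. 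The identity holds initially for $0<|x|<1$ and then, both sides being meromorphic (indeed rational), as an identity of meromorphic functions.

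I expect the only delicate point — everything else being routine residue calculus — to be checking that the right-hand side of \eqref{eq:HilbSeriesGenericOn} genuinely defines a function of $x$ rather than of some $x^{1/a_i}$: although each individual summand involves a choice of branch, the expression $\zeta^{a_j}x^{(a_i+a_j)/a_i}=x\,(\zeta x^{1/a_i})^{a_j}$ depends only on the pole location $\zeta x^{1/a_i}$, and the set of these locations for fixed $i$ is exactly the set of $a_i$-th roots of $x$; hence the inner sum over $R(a_i)$, and therefore the whole expression, is independent of the branch chosen near $x$.
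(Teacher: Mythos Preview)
Your proposal is correct and follows essentially the same route as the paper's proof: identify the poles of the Molien integrand in $\mathbb{D}^1$, invoke the residue computation carried out immediately before the proposition, sum, and pass from off-shell to on-shell via the factor $(1-x^2)$. The extra care you take in verifying that the poles are simple and mutually distinct (this is precisely where genericity is used) and that the resulting expression is branch-independent makes explicit points the paper leaves tacit, but the argument is the same.
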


Note that the expression in Equation \eqref{eq:HilbSeriesGenericOn} does not depend on the signs
of the $a_i$.

\begin{remark}
\label{rem:HilbSeriesNonEffective}
Suppose the representation with weight vector $A$ is not effective so that
$\gcd(a_1, \ldots, a_n) = a > 1$.  Let $b_i = a_i/a$ for each $i$, and then $B = (b_1, \ldots, b_n)$
is the corresponding effective representation.  From the perspective of invariant theory, it is obvious
that $\Hilb_A(x)=\Hilb_B(x)$.  To elucidate this from the perspective of the above computation,
we note that
\begin{align*}
    \Hilb_A(x)
    &=\sum\limits_{i=1}^n
    \sum\limits_{\zeta\in R(a_i)}
    \frac{1}
    {a_i
        \prod\limits_{\substack{j=1 \\ j \neq i}}^n
        (1 - \zeta^{a_j} x^{(a_i + a_j)/a_i}) (1 - \zeta^{-a_j} x^{(a_i - a_j)/a_i})}
    \\&=
    \sum\limits_{i=1}^n
    \sum\limits_{\eta\in R(b_i)}
    \sum\limits_{\zeta^a=\eta}
    \frac{1}
        {ab_i\prod\limits_{\substack{j=1 \\ j\neq i}}^n
        (1 - (\zeta^a)^{b_j} x^{(b_i + b_j)/b_i})(1 - (\zeta^a)^{-b_j} x^{(b_i - b_j)/b_i})}
    \\&=
    \sum\limits_{i=1}^n
    \sum\limits_{\eta\in R(b_i)}
        \frac{1}
        {b_i\prod\limits_{\substack{j=1 \\ j\neq i}}^n
        (1 - \eta^{b_j} x^{(b_i + b_j)/b_i})(1 - \eta^{b_j} x^{(b_i - b_j)/b_i})}
    \quad=\Hilb_B(x).
\end{align*}
Hence, there is an $a$-to-$1$ correspondence between the poles of $F_A(x,z)$ for $z\in\mathbb{D}^1$ and those of
$F_B(x,z)$ for $z\in\mathbb{D}^1$, and the residue of each pole of the latter function is $a$ times the residue
of each corresponding pole of the former.
\end{remark}


\subsection{The degenerate case}
\label{subsec:HilbSeriesDegen1}

Suppose the weight vector $A$ has degeneracies.  Then we can express
$A = (a_1, \ldots, a_1, a_2, \ldots, a_2, \ldots, a_r, \ldots, a_r)$ with each $a_i$ occurring
$p_i$ times and $a_i \neq a_j$ for $i \neq j$.  The off-shell Hilbert series associated to $A$
is given by
\[
    \frac{1}{2\pi i}
    \int_{z \in \Sp^1} \frac{dz}{z\prod\limits_{i=1}^r (1 - xz^{a_i})^{p_i} (1 - xz^{-a_i})^{p_i}}
    =
    \frac{1}{2\pi i}
    \int_{z \in \Sp^1} \frac{z^{-1 + \sum_{i=1}^r p_i a_i} \; dz}{\prod\limits_{i=1}^r (1 - xz^{a_i})^{p_i} (z^{a_i} - x)^{p_i}}
\]
with $|x| < 1$.  Fixing a branch of $x^z$, we again have that for fixed nonzero $x \in \mathbb{D}^1$,
the poles of the integrand $F_A(x,z) = F(x,z)$ with
$z \in \mathbb{D}^1$ occur at $z = \zeta x^{1/a_i}$ for $i=1, \ldots, r$ and $\zeta\in R(a_i)$.
For brevity, we set
\[
    \beta_i(x,z) = (1 - xz^{a_i})^{p_i} (1 - xz^{-a_i})^{p_i}.
\]

Now, fix an $i$ and a $\zeta_0 \in R(a_i)$, and then
\[
    F(x,z)
    =
    \left(\frac{1}{(z - \zeta_0x^{1/a_i})^{p_i}}\right)
    \frac{z^{p_i a_i-1}}
        {(1 - xz^{a_i})^{p_i}
        \prod\limits_{\substack{\zeta^{a_i}=1 \\ \zeta\neq\zeta_0}}
        (z - \zeta x^{1/a_i})^{p_i}
        \prod\limits_{\substack{j=1 \\ j \neq i}}^r \beta_j(x,z)}.
\]
Then the residue at $\zeta_0 x^{1/a_i}$ is given by the $(p_i-1)$st coefficient of the Taylor series of the
expression after the factor in parentheses, which is holomorphic at $z = \zeta_0 x^{1/a_i}$; that is,
$\operatorname{Res}_{z=\zeta_0 x^{1/a_i}} F(x,z)$ is equal to
\[
    \frac{\partial^{p_i-1}}{\partial z^{p_i-1}}
    \frac{z^{p_i a_i-1}}
        {(p_i-1)! (1 - xz^{a_i})^{p_i}
        \prod\limits_{\substack{\zeta^{a_i}=1 \\ \zeta\neq\zeta_0}}
        (z - \zeta x^{1/a_i})^{p_i}
        \prod\limits_{\substack{j=1 \\ j \neq i}}^r \beta_j(x,z)}
    \Big|_{z=\zeta_0 x^{1/a_i}}.
\]
Summing the residues yields the following.

\begin{proposition}
\label{prop:HilbSeriesDegenDeriv}
Let $A = (a_1, \ldots, a_1, a_2, \ldots, a_2, \ldots, a_r, \ldots, a_r)$ with each $a_i$ occurring
$p_i$ times and $a_i \neq a_j$ for $i \neq j$.  The off-shell Hilbert series is
\begin{equation}
\label{eq:HilbSeriesDegenDerivOff}
    \Hilb_A^{\scriptsize\mbox{off}}(x)
    =
    \sum\limits_{i=1}^r
    \sum\limits_{\zeta_0^{a_i}=1}
    \frac{\partial^{p_i-1}}{\partial z^{p_i-1}}
    G_{i}(x,z, \zeta_0)
    \Big|_{z=\zeta_0 x^{1/a_i}},
\end{equation}
where for each $i$,
\[
    G_{i}(x,z,\zeta_0) =
    \frac{z^{p_i a_i-1}}
        {(p_i-1)! (1 - xz^{a_i})^{p_i}
        \prod\limits_{\substack{\zeta^{a_i}=1 \\ \zeta\neq\zeta_0}}
        (z - \zeta x^{1/a_i})^{p_i}
        \prod\limits_{\substack{j=1 \\ j \neq i}}^r \beta_j(x,z)},
\]
and the on-shell Hilbert series is
\begin{equation}
\label{eq:HilbSeriesDegenDeriv}
    \Hilb_A^{\scriptsize\mbox{on}}(x)
    =
    \sum\limits_{i=1}^r
    \sum\limits_{\zeta_0^{a_i}=1}
    \frac{\partial^{p_i-1}}{\partial z^{p_i-1}}
    (1 - x^2)G_{i}(x,z,\zeta_0)
    \Big|_{z=\zeta_0 x^{1/a_i}}.
\end{equation}
\end{proposition}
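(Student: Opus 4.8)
The plan is to repeat, for a degenerate weight vector, the Molien-integral-plus-residues argument already used in Section~\ref{subsec:HilbSeriesGeneric} for the generic case, the only difference being that the poles inside the unit disk now have order $p_i\ge 1$. Since $\Hilb_A^{\off}$ and $\Hilb_A^{\on}$ are unchanged when the weights are replaced by their absolute values, I would first assume without loss of generality that $0 < a_1 < a_2 < \cdots < a_r$ (in particular the weights are nonzero, as in Theorem~\ref{thrm:HilbSeriesGeneral}). Starting from the Molien integral for $\Hilb_A^{\off}(x)$ with $0 < |x| < 1$ displayed at the beginning of Section~\ref{subsec:HilbSeriesDegen1}, the integrand $F(x,z)$ has no pole on $\Sp^1$ and none at $z = 0$ — the exponent $-1 + \sum_i p_i a_i$ is non-negative because each $a_i \ge 1$ — so the residue theorem expresses the integral as the sum of the residues of $F(x,z)$ over its poles in the open disk $\mathbb{D}^1$.

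The one point that needs genuine care is the pole configuration. Using $z^{a_i} - x = \prod_{\zeta \in R(a_i)}(z - \zeta x^{1/a_i})$ for a fixed branch of $x^{1/a_i}$, the poles of $F(x,z)$ in $\mathbb{D}^1$ are the points $\zeta x^{1/a_i}$ with $1 \le i \le r$ and $\zeta \in R(a_i)$, and I claim each has order exactly $p_i$ and that no two of them coincide: for fixed $i$ the $\zeta x^{1/a_i}$ are distinct because the $\zeta$ are distinct roots of unity, and for $i \neq j$ one has $|x|^{1/a_i} \neq |x|^{1/a_j}$ since $a_i \neq a_j$ and $0 < |x| < 1$, so the poles from different blocks lie on circles of different radii; moreover at $z = \zeta_0 x^{1/a_i}$ the surviving factors $1 - x z^{a_i}$ (which is just $1 - x^2 \neq 0$ there) and $\beta_j(x,z)$ for $j \neq i$ are all nonzero. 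Hence extracting $(z - \zeta_0 x^{1/a_i})^{-p_i}$ from $F(x,z)$ leaves the germ $(p_i - 1)!\,G_i(x,z,\zeta_0)$, holomorphic and nonvanishing at $z = \zeta_0 x^{1/a_i}$, and the residue formula at a pole of order $p_i$ gives $\operatorname{Res}_{z = \zeta_0 x^{1/a_i}} F(x,z) = \partial_z^{p_i-1} G_i(x,z,\zeta_0)\big|_{z = \zeta_0 x^{1/a_i}}$; summing over $i$ and $\zeta_0$ produces \eqref{eq:HilbSeriesDegenDerivOff}. I would add the remark that although an individual summand depends on the branch of $x^{1/a_i}$, the inner sum over $\zeta_0 \in R(a_i)$ does not, since another branch merely permutes the points $\zeta_0 x^{1/a_i}$; thus the right-hand side is a well-defined meromorphic function of $x$, and as it agrees with the rational function $\Hilb_A^{\off}$ on the punctured disk it agrees with it identically.

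For the on-shell series, the $1 \times n$ weight matrix of a nonzero $A$ has rank $1$, so the proposition in Section~\ref{sec:Background} relating the two Hilbert series gives $\Hilb_A^{\on}(x) = (1 - x^2)\,\Hilb_A^{\off}(x)$; since $1 - x^2$ does not depend on $z$ it can be pulled inside each $\partial_z^{p_i-1}$, turning \eqref{eq:HilbSeriesDegenDerivOff} into \eqref{eq:HilbSeriesDegenDeriv}.

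I expect the only real obstacle to be the bookkeeping in the second paragraph: pinning down that the poles in $\mathbb{D}^1$ are exactly the $\zeta x^{1/a_i}$, each of order precisely $p_i$ and pairwise distinct, so that the standard order-$p_i$ residue formula applies verbatim. Once that is in place the rest is the same manipulation of rational functions as in the generic case, with the $(p_i-1)$st $z$-derivative replacing plain evaluation.
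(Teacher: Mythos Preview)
Your proposal is correct and follows essentially the same route as the paper: Molien integral, identification of the poles $\zeta_0 x^{1/a_i}$ in $\mathbb{D}^1$, extraction of the factor $(z-\zeta_0 x^{1/a_i})^{-p_i}$, and application of the order-$p_i$ residue formula. If anything you are more careful than the paper about the distinctness of the poles (via the radii argument) and the branch-independence of the inner sum over $\zeta_0$; the paper leaves these points implicit.
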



\subsection{Another approach to the degenerate case: Proof of Theorem \ref{thrm:HilbSeriesGeneral}}
\label{subsec:HilbSeriesDegen2}

Although the formulas given in Proposition \ref{prop:HilbSeriesDegenDeriv} are useful for concrete
computations, e.g. to compute the Hilbert series associated to a specific weight vector,
we will in the sequel require the more explicit expression for the Hilbert series associated to a
degenerate weight vector given in Theorem \ref{thrm:HilbSeriesGeneral}.  In particular, the direct
relationship between the Hilbert series in the generic and degenerate cases
is not apparent from Proposition \ref{prop:HilbSeriesDegenDeriv}.

To this end, we approach the computation of the residues in the degenerate case as follows.
First express the weight vector as $A = (\bs{a},\bs{b}) = (a, \ldots, a, b_1, \ldots, b_q)$
where $a$ occurs $p$ times and $a \neq b_i$ for each $i$.  To explain this notation, note that
we are interested in computing the residues at poles corresponding to the weight $a$
(i.e. poles of the form $z = \zeta x^{1/a}$ in the computation in Subsection
\ref{subsec:HilbSeriesDegen1}).  We make no additional assumptions about the $b_i$ so that,
for instance, the weight vector $(\bs{b})$ may itself have degeneracies.

We then consider
\begin{equation}
\label{eq:HilbSeriesDegen2Integral}
    \frac{1}{2\pi i}
    \int_{z \in \Sp^1} \frac{dz}{z\prod\limits_{i=1}^p (1 - x_i z^a)(1 - x_i z^{-a})
        \prod\limits_{j=1}^q (1 - y_j z^{b_j})(1 - y_j z^{-b_j})},
\end{equation}
where $|x_i| < 1$ for $i = 1, \ldots, p$, the $x_1, \ldots, x_p$ are distinct, and
$|y_j| < 1$ for $j = 1, \ldots, q$.
Let $\bs{x} = (x_1, \ldots, x_p)$ and $\bs{y} = (y_1, \ldots, y_q)$, and as above let
\[
    F(\bs{x},\bs{y},z) = F_{(\bs{a},\bs{b})}(\bs{x},\bs{y},z)
    =
    \frac{1}{z\prod\limits_{i=1}^p (1 - x_i z^a)(1 - x_i z^{-a})
        \prod\limits_{j=1}^q (1 - y_j z^{b_j})(1 - y_j z^{-b_j})}
\]
denote the integrand.  Succinctly, the method here will be to compute the residues corresponding
to this integral and then consider the limit as $\bs{x}$ approaches a diagonal element
$\Delta_p x:= (x, \ldots, x) \in \C^p$ and $\bs{y} \to \Delta_q x = (x, \ldots, x) \in \C^q$.

First, fix a branch of the logarithm near $x$.  We will assume throughout this section that
each $x_i$ and $y_j$ is located in a connected ball $U$ about $x$ contained in the domain of this
branch, and $x^z$, $x_i^z$, and $y_j^z$ will always be defined with respect to this branch.
Then the poles of $F(\bs{x},\bs{y},z)$
for $|z|<1$ occur at $z = \zeta x_i^{1/a}$ for $i = 1, \ldots, p$ and $\zeta$ an $a$th root of unity
or $z = \eta y_j^{1/b_j}$ for $j = 1, \ldots, q$ and $\eta$ a $b_j$th root of unity.  Hence,
the integral in Equation \eqref{eq:HilbSeriesDegen2Integral} is given by
\[
    \sum\limits_{\zeta^a=1} \sum\limits_{i=1}^p
        \operatorname{Res}_{z=\zeta x_i^{1/a}}
        F(\bs{x},\bs{y},z)
    +
    \sum\limits_{j=1}^q \sum\limits_{\eta^{b_j}=1}
        \operatorname{Res}_{z=\eta y_j^{1/b_j}}
        F(\bs{x},\bs{y},z).
\]
It is easy to see that for a specific $i$ and $\zeta$,
$\lim_{\bs{x}\to\Delta x} \operatorname{Res}_{z=\zeta x_i^{1/a}} F(\bs{x},\bs{y},z)$ is not defined;
see Equation \eqref{eq:HilbSeriesDegen2Residue} below.  However, we have the following.

\begin{lemma}
\label{lem:HilbSeriesDegen2Removable}
For a fixed $a$th root of unity $\zeta_0$ and fixed $y_j$ for $j=1,\ldots, q$,
\[
    \sum\limits_{i=1}^p
        \operatorname{Res}_{z=\zeta_0 x_i^{1/a}}
        F(\bs{x},\bs{y},z)
\]
as a function of $(\bs{x}, z)$ admits an analytic continuation to the set
\[
    \{ (\bs{x}, z) : |z| < 1, |x_i| < 1, (\zeta_0 x_i^{1/a})^{b_j} \neq y_j
    \;\forall i, j\}
    \subseteq \C^{p+2}.
\]
\end{lemma}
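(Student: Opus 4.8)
The plan is to write out the residue at a single pole $z = \zeta_0 x_i^{1/a}$ explicitly and then recognize the singular behavior in $\bs{x}$ as poles along the diagonals $x_i = x_{i'}$ that cancel when one sums over $i$. First I would compute, exactly as in Subsection \ref{subsec:HilbSeriesDegen1} but now with the $x_i$ distinct, the residue of $F(\bs{x},\bs{y},z)$ at the simple pole $z = \zeta_0 x_i^{1/a}$. Writing $F$ with numerator $z^{-1+pa+\sum b_j}$ and denominator a product of $(1-x_{i'}z^a)(z^a - x_{i'})$ over $i'$ and $(1-y_jz^{b_j})(z^{b_j}-y_j)$ over $j$, the pole at $z = \zeta_0 x_i^{1/a}$ comes from the single factor $(z^a - x_i)$; its residue picks up a factor $\tfrac{1}{a}\,(\zeta_0 x_i^{1/a})^{1-a}$ from differentiating $z^a - x_i$, times the remaining factors evaluated at $z = \zeta_0 x_i^{1/a}$. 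Among the surviving denominator factors are the terms $(z^a - x_{i'})$ for $i' \neq i$, which evaluate to $x_i - x_{i'}$; these are the sole source of singularities on the diagonal, since the factors $(1-x_{i'}z^a)$ evaluate to $1 - x_{i'}x_i$, which is fine for $|x_{i'}|,|x_i|<1$, and the $y_j$-factors are nonzero precisely by the hypothesis $(\zeta_0 x_i^{1/a})^{b_j} \neq y_j$. So the $i$th residue is a rational function of $\bs{x}$ whose only poles in the region $\{|x_i|<1,\ (\zeta_0 x_i^{1/a})^{b_j}\neq y_j\}$ lie along the hyperplanes $x_i = x_{i'}$, $i'\neq i$, each of order one.

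Next I would argue that the sum $\sum_{i=1}^p \operatorname{Res}_{z=\zeta_0 x_i^{1/a}} F$ has no pole along any diagonal $x_i = x_{i'}$. Fix a pair $i \neq i'$ and look at the Laurent expansion of the sum in the variable $x_i - x_{i'}$ near $x_i = x_{i'}$: only the $i$th and $i'$th summands contribute a $(x_i - x_{i'})^{-1}$ term, and I would check that the two residues (in the $x_i - x_{i'}$ sense) are negatives of each other. This is a clean symmetry check: in the $i$th summand the offending factor is $(x_i - x_{i'})^{-1}$ with the rest evaluated at $z = \zeta_0 x_i^{1/a}$, and in the $i'$th summand it is $(x_{i'} - x_i)^{-1}$ with the rest evaluated at $z = \zeta_0 x_{i'}^{1/a}$; in the limit $x_i \to x_{i'}$ the "rest" factors coincide (the two evaluation points merge, and all remaining factors of each summand agree term-by-term once one notes the numerator powers and the matching denominator products), so the simple poles cancel. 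Hence the sum extends holomorphically across $\{x_i = x_{i'}\}$ for every pair, i.e. it is holomorphic on the whole stated set; since that set is obtained from an open subset of $\C^{p+2}$ by deleting the union of the hypersurfaces $(\zeta_0 x_i^{1/a})^{b_j} = y_j$, and the sum is already rational (hence meromorphic) there with no poles once the diagonals are removed, the analytic continuation is immediate.

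One technical point worth being careful about: the appearance of fractional powers $x_i^{1/a}$ means "rational function" should be read after the substitution fixing the chosen branch of the logarithm on the ball $U$; on $U$ the functions $x_i^{1/a}$ are single-valued holomorphic, so all the expressions above are honestly holomorphic (not just formally rational) on the relevant open set, and the diagonal-cancellation computation goes through verbatim with $x_i^{1/a}$ treated as a holomorphic coordinate. I expect the main obstacle to be bookkeeping the cancellation in the previous paragraph cleanly — matching up which denominator factor of the $i$th summand corresponds to which factor of the $i'$th summand so that the two simple-pole coefficients are manifestly opposite — rather than any conceptual difficulty; once the residue is written in the symmetric form above, the cancellation is forced. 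The explicit formula for the analytically continued function (which is what is actually used later, and which specializes at $\bs{x}=\Delta_p x$ to the $i$-sum of terms appearing in Theorem \ref{thrm:HilbSeriesGeneral}) will be recorded in the course of this computation.
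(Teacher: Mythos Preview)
Your argument is correct and arrives at the same conclusion as the paper, but by a different route. You show pairwise cancellation of simple poles along each diagonal $\{x_i=x_{i'}\}$: only the $i$th and $i'$th summands carry a factor $(x_i-x_{i'})^{-1}$, and their coefficients agree in the limit $x_i\to x_{i'}$ with opposite signs, so the sum has no pole along any diagonal and hence extends holomorphically. The paper instead puts the sum over a common denominator, checks that the resulting numerator is \emph{alternating} in $x_1,\dots,x_p$ (via $\alpha_i(x_{\sigma(1)},\dots,x_{\sigma(p)})=-\alpha_{\sigma(i)}(x_1,\dots,x_p)$ for odd $\sigma$), and then invokes divisibility by the Vandermonde determinant $\prod_{j<k}(x_j-x_k)$ to cancel the diagonal factors in the denominator.

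The two arguments encode the same symmetry: your pairwise cancellation is exactly the statement that the common-denominator numerator changes sign under each transposition $(i\;i')$. Your approach is more elementary in that it avoids the Vandermonde divisibility fact and works directly in the analytic category (no need to worry about whether the $\beta_k(y_k,\zeta_0 x_i^{1/a})$ factors, which involve fractional powers $x_i^{b_k/a}$, are genuinely polynomial). The paper's approach, on the other hand, yields as a byproduct an explicit closed form for the continuation, namely
\[
\frac{S(x_1,\dots,x_p)}{a\prod_j(1-x_j^2)\prod_{j<k}(1-x_jx_k)\prod_{j,k}\beta_k(y_k,\zeta_0 x_j^{1/a})}
\]
with $S$ symmetric; this formula is not strictly needed downstream (only the existence of the limit as $\bs{x}\to\Delta_p x$ is used), so your argument suffices for the applications in the paper.
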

In other words, for suitable values of the $y_k$, the singularities at points where
$x_i = x_j$ for $i \neq j$ are removable.
\begin{proof}
For brevity, set
\[
    \beta_j(y,z) = (1 - y z^{b_j})(1 - y z^{-b_j})
\]
for $j = 1, \ldots, q$.  Fixing a value of $i$ and an $a$th root of unity $\zeta_0$
and using manipulations similar to those in Subsection \ref{subsec:HilbSeriesGeneric},
we express $F(\bs{x},\bs{y},z)$ as
\[
    \left(
        \frac{1}{z - \zeta_0 x_i^{1/a}}
    \right)
    \frac{z^{a-1}}{(1 - x_i z^a)
        \prod\limits_{\substack{\zeta^a=1 \\ \zeta\neq\zeta_0}}(z - \zeta x_i^{1/a})
        \prod\limits_{\substack{j=1 \\ j \neq i}}^p (1 - x_j z^a)(1 - x_j z^{-a})
        \prod\limits_{k=1}^q \beta_k (y_k,z)}.
\]
Then when $x_i \neq x_j$ for $i \neq j$, again following the computations in Subsection \ref{subsec:HilbSeriesGeneric},

\begin{align}
\label{eq:HilbSeriesDegen2Residue}
    &\operatorname{Res}_{z=\zeta_0 x_i^{1/a}} F(\bs{x},\bs{y},z)
    \\\nonumber
    &\quad\quad\quad\quad
    =
    \frac{x_i^{p-1}}{a(1 - x_i^2)
        \prod\limits_{\substack{j=1 \\ j \neq i}}^p (1 - x_j x_i )(x_i - x_j)
        \prod\limits_{k=1}^q \beta_k (y_k,\zeta_0 x_i^{1/a})}.
\end{align}
Summing over $i$ and simplifying yields
\begin{align*}
    &\sum\limits_{i=1}^p \operatorname{Res}_{z=\zeta_0 x_i^{1/a}} F(\bs{x},\bs{y},z)
    \\
    &=
    \frac{
        \sum\limits_{i=1}^p
        (-1)^{p-i} x_i^{p-1}
            \prod\limits_{\substack{j=1 \\ j\neq i}}^p (1 - x_j^2)
            \prod\limits_{\substack{1\leq j < k \leq p \\ j,k \neq i}} (1 - x_j x_k)(x_k - x_j)
            \prod\limits_{\substack{j=1 \\ j\neq i}}^p\prod\limits_{k=1}^q \beta_k(y_k,\zeta_0 x_i^{1/a})
    }{
            a
            \prod\limits_{j=1}^p (1 - x_j^2)
            \prod\limits_{1\leq j < k \leq p} (1 - x_j x_k) (x_k - x_j)
            \prod\limits_{j=1}^p\prod\limits_{k=1}^q \beta_k(y_k,\zeta_0 x_j^{1/a})
    }.
\end{align*}

As a polynomial in $x_1, \ldots, x_p$, the numerator of this expression is alternating.  To see this, define
\begin{align*}
    &\alpha_i(x_1,\ldots,x_p)
    \\&\quad\quad=
        (-1)^{p-i} x_i^{p-1}
            \prod\limits_{\substack{j=1 \\ j\neq i}}^p (1 - x_j^2)
            \prod\limits_{\substack{1\leq j < k \leq p \\ j,k \neq i}} (1 - x_j x_k)(x_k - x_j)
            \prod\limits_{\substack{j=1 \\ j\neq i}}^p\prod\limits_{k=1}^q \beta_k(y_k,\zeta_0 x_i^{1/a})
\end{align*}
so that the numerator is equal to $\sum_{i=1}^p \alpha_i(x_1,\ldots, x_p)$.
For an odd permutation $\sigma\in\mathcal{S}_p$, an elementary computation yields that for each $i$,
\[
    \alpha_i(x_{\sigma(1)}, \ldots, x_{\sigma(p)}) = - \alpha_{\sigma(i)}(x_1, \ldots, x_p),
\]
from which it follows that the numerator is alternating.
Then as every alternating polynomial is divisible by the Vandermonde determinant
$\prod_{1\leq j < k \leq p} (x_j - x_k)$, see \cite[Section I.3]{MacdonaldBook},
it follows that there is a polynomial
$S(x_1, \ldots, x_p)$ (whose coefficients are functions of $a$, $b_1, \ldots, b_q$,
$y_1, \ldots, y_q$, and $\zeta_0$)
such that the alternating numerator can be expressed as the product
$S(x_1, \ldots, x_p)\prod_{1\leq j < k \leq p} (x_k - x_j)$.  Therefore, the singularities at $x_k = x_j$
in the sum of the residues are removable, and
$\sum_{i=1}^p \operatorname{Res}_{z=\zeta_0 x_i^{1/a}} F(\bs{x},z)$
admits the continuation
\begin{equation}
\label{eq:HilbSeriesDegen2SumUsingS}
    \frac{S(x_1, \ldots, x_p)}
        {a
        \prod\limits_{j=1}^p (1 - x_j^2)
        \prod\limits_{1\leq j < k \leq p} (1 - x_j x_k)
        \prod\limits_{j=1}^p\prod\limits_{k=1}^q \beta_k(y_k,\zeta_0 x_j^{1/a})}.
\end{equation}
This function is clearly analytic on the required domain, completing the proof.
\end{proof}

Fix an $a$th root of unity $\zeta_0$.  For each $i$, using Equation \eqref{eq:HilbSeriesDegen2Residue}
and setting $x_i = x^{t_i}$ where each $t_i$ is positive real and
$\bs{t} = (t_1, \ldots, t_p)$, we can express the limit
as $(\bs{x},\bs{y})\to(\Delta_p x,\Delta_q x)$ of
$\sum_{i=1}^p \operatorname{Res}_{z=\zeta_0 x_i^{1/a}} F(\bs{x},\bs{y},z)$ as
\[
    \lim\limits_{\bs{t}\to \Delta_p 1}\quad
    \sum\limits_{i=1}^p
        \frac{1}{a(1 - x^{2t_i})
            \prod\limits_{\substack{j=1 \\ j \neq i}}^p (1 - x^{t_j+t_i})(1 - x^{t_j-t_i})
            \prod\limits_{k=1}^q \beta_k (x,\zeta_0 x^{t_i/a})}
\]
Setting $\bs{s} = (s_1, \ldots, s_p)$, we rewrite this as
\[
    \lim\limits_{\substack{\bs{t}\to \Delta_p 1 \\ \bs{s} \to \Delta_p 1}}
    \sum\limits_{i=1}^p
        \frac{1}{(a/s_i)(1 - x^{2t_i/s_i})
            \prod\limits_{\substack{j=1 \\ j \neq i}}^p (1 - x^{(t_j+t_i)/s_j})(1 - x^{(t_j-t_i)/s_j})
            \prod\limits_{k=1}^q \beta_k (x,\zeta_0 x^{t_i/a})}
\]
so that setting $s_j = t_j$ for $j=1,\ldots,p$ yields
\begin{equation}
\label{eq:HilbSeriesDegen2Tlim}
    \lim\limits_{\bs{t}\to \Delta_p 1}\quad
    \sum\limits_{i=1}^p
        \frac{1}{(a/t_i)(1 - x^2)
            \prod\limits_{\substack{j=1 \\ j \neq i}}^p (1 - x^{(t_j+t_i)/t_j})(1 - x^{(t_j-t_i)/t_j})
            \prod\limits_{k=1}^q \beta_k (x,\zeta_0 x^{t_i/a})}
\end{equation}
where we note that
\begin{align*}
    (t_j + t_i)/t_j
    &=
    (a/t_i + a/t_j)/(a/t_i),
    \\
    (t_j - t_i)/t_j
    &=
    (a/t_i - a/t_j)/(a/t_i),
    \quad\quad\mbox{and}
    \\
    \beta_k (x,\zeta_0 x^{t_i/a})
    &=
    (1 - \zeta_0^{b_k} x^{(a + t_i b_k)/a})
    (1 - \zeta_0^{-b_k} x^{(a - t_i b_k)/a})
    \\&=
    (1 - \zeta_0^{b_k} x^{(a/t_i + b_k)/(a/t_i)})
    (1 - \zeta_0^{-b_k} x^{(a/t_i - b_k)/(a/t_i)}).
\end{align*}
Hence, setting $c_j = a/t_j$ for each $1,\ldots,p$ and $\bs{c} = (c_1, \ldots, c_p)$ yields
\begin{equation}
\label{eq:HilbSeriesDegen2Clim}
    \lim\limits_{\bs{c}\to \Delta_p a}\quad
    \sum\limits_{i=1}^p
        \frac{1}{c_i(1 - x^2)
            \prod\limits_{\substack{j=1 \\ j \neq i}}^p
                (1 - x^{(c_i + c_j)/c_i})(1 - x^{(c_i - c_j)/c_i})
            \prod\limits_{k=1}^q \beta_k (x,\zeta_0 x^{1/c_i})}
\end{equation}
where
\[
    \beta_k (x,\zeta_0 x^{1/a_i})
    =
    (1 - \zeta_0^{b_k} x^{(c_i + b_k)/c_i})
    (1 - \zeta_0^{-b_k} x^{(c_i - b_k)/c_i}).
\]

With this, we have
\begin{align*}
    &\lim\limits_{(\bs{x},\bs{y})\to(\Delta_p x,\Delta_q x)}
        \frac{1}{2\pi i}\int_{z \in \Sp^1} F(\bs{x},\bs{y},z) \; dz
    \\& \quad\quad\quad\quad
    =
    \sum\limits_{\zeta^a=1}
    \lim\limits_{\bs{x}\to\Delta_p x}
    \sum\limits_{i=1}^p
        \operatorname{Res}_{z=\zeta x_i^{1/a}}
        F(\bs{x},\Delta_q x,z)
    \\& \quad\quad\quad\quad\quad\quad\quad\quad
    +
    \lim\limits_{(\bs{x},\bs{y})\to(\Delta_p x,\Delta_q x)}
    \sum\limits_{j=1}^q \sum\limits_{\eta^{b_j}=1}
        \operatorname{Res}_{z=\eta y_j^{1/b_j}}
        F(\bs{x},\bs{y},z),
\end{align*}
where the limit of the sum over $i$ exists as above.  Switching the roles of $a$ and each
value of $b_j$ to apply the above argument to each integer that appears as a weight in $A$,
the limit of the sum over $j$ exists as well, so that the limit of integrals
on the left side of the equation exists.

Finally, it remains only to show that
\begin{equation}
\label{eq:HilbSeriesDegen2Limit}
    \lim\limits_{(\bs{x},\bs{y})\to(\Delta_p x,\Delta_q x)}
        \frac{1}{2\pi i}\int_{z \in \Sp^1} F(\bs{x},\bs{y},z) \; dz
    =
    \frac{1}{2\pi i}\int_{z \in \Sp^1}
        F(\Delta_p x,\Delta_q x,z) \; dz
    =
    \Hilb_A^{\scriptsize\mbox{off}}(x).
\end{equation}
However, note that if $\epsilon = 1 - |x|$ and each $x_i$ and $y_j$ are chosen to have
modulus bounded above by $1 - \epsilon/2$, then for $|z|=1$, $|F(\bs{x},\bs{y},z)|$ is bounded
by $2^{p+q}/\epsilon^{p+q}$.  Choose sequences
$\bs{x}_m\to\Delta_p x$ and $\bs{y}_m\to\Delta_q x$ such that for each $m$, the coordinates
of $\bs{x}_m$ and $\bs{y}_m$ are all distinct and contained in the ball of radius
$1 - \epsilon/2$ about the origin as well as the connected subset $U$ in the
domains of each of the branches of $z^{1/a}$ and $z^{1/b_j}$.  Then an application
of the Dominated Convergence Theorem yields
\[
    \lim\limits_{m\to\infty}
        \frac{1}{2\pi i}\int_{z \in \Sp^1} F(\bs{x}_m,\bs{y}_m,z) \; dz
    =
    \frac{1}{2\pi i}\int_{z \in \Sp^1}
        F(\Delta_p x,\Delta_q x,z) \; dz,
\]
so that as the limit on the left side of Equation \eqref{eq:HilbSeriesDegen2Limit}
has been shown to exist, Equation \eqref{eq:HilbSeriesDegen2Limit} follows.

With this, applying Equation \eqref{eq:HilbSeriesDegen2Clim} to
each degeneracy in the weight vector $A$ completes the proof of
Theorem \ref{thrm:HilbSeriesGeneral}.  Note that for $x$ such
that $|x| > 1$, applying the above argument to $1/x$
demonstrates that Equation \eqref{eq:HilbSeriesGeneral} holds
on the domain of the rational function
$\Hilb_A^{\scriptsize\mbox{on}}(x)$.

\section{An algorithm for computing the Hilbert series}
\label{sec:Algorithm}

In this section, we present a simple algorithm to calculate the
on-shell Hilbert series of a unitary circle representation. The
algorithm plays an important role in the logic of this work as
it provides us with plenty of empirical data. For simplicity, we
restrict to the case of a generic weight vector
$A=(a_1,\dots,a_n)$. Without loss of generality, we assume that
the weights are non-negative.

First let us fix a number $a\in\N$ and introduce an operation $U_a:\Q[\![x]\!]\to\Q[\![x]\!]$ that assigns to a
formal power series $F(x)=\sum_{\i\ge 0}F_i\: x^i$  the series
\[
    (U_a F)(x):=F_{(a)}(x):=\sum_{i\ge 0}F_{ia}\:x^i\in \Q[\![x]\!].
\]

\begin{lemma}
\label{rationalrecognition}
If $F(x)$ is a rational power series, then $(U_a F)(x)=F_{(a)}(x)$ is rational as well.
\end{lemma}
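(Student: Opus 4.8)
The plan is to exploit the structure of rational power series via partial fractions over $\overline{\mathbb{Q}}$. First I would recall the standard characterization: a power series $F(x) = \sum_{i \ge 0} F_i x^i \in \mathbb{Q}[\![x]\!]$ is rational if and only if the sequence $(F_i)$ satisfies a linear recurrence with constant coefficients, equivalently if and only if $F(x) = P(x)/Q(x)$ for polynomials $P, Q \in \mathbb{Q}[x]$ with $Q(0) \neq 0$. Passing to an algebraic closure, such an $F$ admits a partial fraction decomposition $F(x) = p(x) + \sum_{k} \frac{c_k}{(1 - \lambda_k x)^{m_k}}$ with $p \in \mathbb{Q}[x]$ a polynomial and finitely many terms with $\lambda_k \in \overline{\mathbb{Q}}^\times$, $m_k \ge 1$, $c_k \in \overline{\mathbb{Q}}$. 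The coefficients are then of the mixed form $F_i = (\text{polynomial in } i) \cdot \lambda^i$ summed over the roots, for $i$ large.

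Next I would check that $U_a$ acts nicely on each building block. For the polynomial part $p(x)$, $U_a p$ is again a polynomial, hence rational. For a term $\frac{1}{(1-\lambda x)^m}$, whose $i$-th coefficient is $\binom{i+m-1}{m-1}\lambda^i$, applying $U_a$ replaces $i$ by $ia$, giving coefficient $\binom{ia+m-1}{m-1}\lambda^{ia} = \binom{ia+m-1}{m-1}(\lambda^a)^i$; since $\binom{ia+m-1}{m-1}$ is a polynomial in $i$ of degree $m-1$, this is the coefficient sequence of a rational function with a single pole at $x = \lambda^{-a}$ of order $m$ — concretely a $\overline{\mathbb{Q}}$-linear combination of $\frac{1}{(1-\lambda^a x)^j}$ for $j = 1, \ldots, m$. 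By linearity, $U_a F$ is a finite $\overline{\mathbb{Q}}$-linear combination of such terms plus a polynomial, hence rational over $\overline{\mathbb{Q}}$.

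It remains to descend the field of definition: a priori I have only shown $U_a F \in \overline{\mathbb{Q}}[\![x]\!]$ is the Taylor series of a rational function over $\overline{\mathbb{Q}}$, but $U_a F$ clearly has coefficients in $\mathbb{Q}$ (indeed in whatever ring contains the $F_i$). The cleanest fix is to avoid the closure entirely: instead of partial fractions, use the recurrence description. If $F_i$ satisfies $\sum_{j=0}^{d} q_j F_{i+j} = 0$ for all $i \ge i_0$ with $q_j \in \mathbb{Q}$, then the subsequence $(F_{ia})$ satisfies the linear recurrence whose characteristic polynomial has roots the $a$-th powers of the roots of $\sum q_j T^j$ — equivalently, one can write down an explicit $\mathbb{Q}$-coefficient recurrence for $(F_{ia})$ by taking the resultant-type polynomial $\prod(T - \mu^a)$ over roots $\mu$, which is again in $\mathbb{Q}[T]$ since it is symmetric in the roots. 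Then $U_a F$ satisfies a $\mathbb{Q}$-linear recurrence and is therefore rational over $\mathbb{Q}$. I expect the main obstacle to be precisely this bookkeeping step: making sure the recurrence for the decimated sequence is both correct and has rational coefficients, and correctly handling the finitely many initial terms $i < i_0$ where the recurrence may fail (these only affect the polynomial part and are harmless). An alternative, perhaps slicker route is the roots-of-unity filter: $\sum_{i} F_{ia} x^{ia} = \frac{1}{a}\sum_{\zeta^a = 1} F(\zeta x^{1/a})$... but this introduces fractional powers and branch issues, so I would present the recurrence argument as the main proof.
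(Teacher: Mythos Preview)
Your approach is essentially the same as the paper's: both use the characterization (cited from Stanley) that a power series is rational if and only if its coefficient sequence has the form $F_n=\sum_i P_i(n)\mu_i^n$ for finitely many polynomials $P_i$ and scalars $\mu_i$, and observe that replacing $n$ by $an$ preserves this form with $\mu_i$ replaced by $\mu_i^a$. The paper's proof is simply terser---it works over $\mathbb{C}$ and does not fuss over the descent to $\mathbb{Q}$, which is in any case automatic since a series in $\mathbb{Q}[\![x]\!]$ that is rational over $\mathbb{C}$ satisfies a $\mathbb{C}$-linear recurrence and hence (the recurrence conditions being $\mathbb{Q}$-linear in the unknown coefficients) a $\mathbb{Q}$-linear one.
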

\begin{proof}
Recall the well known fact (cf. \cite[Theorem 4.1.1]{StanleyVolume1}) that $F(x)$ is rational if and only if
there exists a finite collection of polynomials $P_1,P_2,\dots P_k$ and complex numbers $\mu_1,\mu_2,\dots \mu_k$
such that $F_n=\sum_{i=1}^k P_i(n)\mu_i^n$.  Moreover, if we write $F(x)=P(x)/Q(x)$, then $Q(x)$ factors as
$Q(x)=\prod_{i=1}^k(1-\mu_i x)$. It follows that $F_{(a)}(x)$ is rational, and we can write  $F_{(a)}(x)=P_a(x)/Q_a(x)$
with  $Q_a(x)=\prod_{i=1}^k(1-\mu_i^a x)$.
\end{proof}

It is easy to see that the operation $U_a$ can be understood in terms of averaging over the cyclic groups of order $a$, i.e.
\[
    (U_a F)(x)=F_{(a)}(x)=\frac{1}{a}\sum_{\zeta^a=1}F(\zeta \sqrt[a]{x}).
\]
This operation has been used before in the context of invariant theory calculations \cite{SpringerSU2}.

Let us introduce for each $i=1,\dots,n$ the function
\[
    \widetilde{\Phi_i}(x):=\frac{1}{\prod_{j\ne i}(1-x^{a_i-a_j})(1-x^{a_i+a_j})}.
\]
Note that $\widetilde{\Phi_i}(x)$ is analytic at $x=0$. Reinterpreting formula
\eqref{eq:HilbSeriesGenericOn}, we find that
\[
    \Hilb_A^\on(x)=\sum_{i=1}^n (\widetilde{\Phi_i})_{(a_i)}(x).
\]
We observe that each $(\widetilde{\Phi_i})(x)$ can be written in the form $P(x)/Q(x)$ where $P(x)$ is a monomial and $Q(x)$
is a product of factors of the form $(1-x^m)$ with $m>0$.
We use the idea of the proof of Lemma \ref{rationalrecognition} to guess the denominator of
$(\widetilde{\Phi_i})_{(a_i)}(x)$. Namely we have to  replace each factor according to the rule
\begin{equation}
\label{eq:manipulateDenom}
    (1-x^m)\mapsto (1-x^{\lcm(a_i,m)/a_i})^{\gcd(a_i,m)}.
\end{equation}
Knowing the Taylor expansion of the rational function $(\widetilde{\Phi_i})_{(a_i)}(x)$ and its denominator polynomial,
the numerator polynomial can be determined by multiplying out. To make this an algorithm, it remains only to understand how far
we have to calculate the Taylor expansion of $\widetilde{\Phi_i}(x)$. To this end, we use Kempf's bound
(see \cite[Theorem 4.3]{Kempf79}). If we write $\Hilb_A^{\off}(x)$ as a fraction $P(x)/Q(x)$,  the bound says that
$\deg(P)\le\deg(Q)$. Consequently, we need to determine the Taylor expansion of $\widetilde{\Phi_i}(x)$ merely until
degree $a_i(d + 2)$ where $d$ is the degree of the denominator of $(\widetilde{\Phi_i})(x)$ (which coincides with the
degree of the denominator of $(\widetilde{\Phi_i})_{(a_i)}(x)$).

As an example, consider the weight vector $(1, 2, 3)$.  We have
\begin{align*}
    \widetilde{\Phi_1}(x)
        &=  \frac{1}{(1 - x^{-1})(1 - x^{3})(1 - x^{-2})(1 - x^{4})}
        &&= \frac{x^3}{(1 - x)(1 - x^{2})(1 - x^{3})(1 - x^{4})},
    \\
    \widetilde{\Phi_2}(x)
        &=  \frac{1}{(1 - x)(1 - x^{3})(1 - x^{-1})(1 - x^{5})}
        &&= \frac{-x}{(1 - x)^2(1 - x^{3})(1 - x^{5})},
    \\
    \widetilde{\Phi_3}(x)
        &=  \frac{1}{(1 - x)(1 - x^{2})(1 - x^{4})(1 - x^{5})}.
\end{align*}
Clearly $(\widetilde{\Phi_1})_{(1)}(x) = \widetilde{\Phi_1}(x)$.  Using Equation \eqref{eq:manipulateDenom},
the denominator of $(\widetilde{\Phi_2})_{(2)}(x)$ is $(1 - x)^2(1 - x^3)(1 - x^5)$ and that of
$(\widetilde{\Phi_3})_{(3)}(x)$ remains $(1 - x)(1 - x^{2})(1 - x^{4})(1 - x^{5})$.  To compute the numerator of $(\widetilde{\Phi_2})_{(2)}(x)$,
we compute the Taylor series of $\widetilde{\Phi_2}(x)$ to degree $a_2 (d + 2) = 24$ (where $d = 10$ is the degree of the denominator),
apply $U_2$, and multiply by $(1 - x)^2(1 - x^3)(1 - x^5)$.
The numerator of $(\widetilde{\Phi_2})_{(2)}(x)$ is the sum of the terms of degree at most $d = 10$ in the result,
\[
    -2 z - z^2 - 2 z^3 - z^4 - 2 z^5.
\]
Applying the same process to compute $(\widetilde{\Phi_3})_{(3)}(x)$ yields
the numerator
\[
    1 + z + 4 z^2 + 5 z^3 + 5 z^4 + 5 z^5 + 4 z^6 + z^7 + z^8,
\]
and hence the Hilbert series is given by
\begin{align*}
    &(\widetilde{\Phi_1})_{(1)}(x) + (\widetilde{\Phi_2})_{(2)}(x) + (\widetilde{\Phi_3})_{(3)}(x)
    \\&\quad\quad =
    \frac{x^3}{(1 - x)(1 - x^{2})(1 - x^{3})(1 - x^{4})}
    +
    \frac{-2 z - z^2 - 2 z^3 - z^4 - 2 z^5}{(1 - x)^2(1 - x^3)(1 - x^5)}
    \\&\quad\quad\quad\quad\quad\quad +
    \frac{1 + z + 4 z^2 + 5 z^3 + 5 z^4 + 5 z^5 + 4 z^6 + z^7 + z^8}
    {(1 - x)(1 - x^{2})(1 - x^{4})(1 - x^{5})}
    \\&\quad\quad =
    \frac{1 + z^2 + 3 z^3 + 4 z^4 + 4 z^5 + 4 z^6 + 3 z^7 + z^8 + z^{10}}
        {(1 - z^2)(1 - z^3)(1 - z^4)(1 - z^5)}.
\end{align*}

We have implemented the algorithm in a \emph{Mathematica} \cite{Mathematica} notebook available at
\texttt{http:$\backslash\backslash$faculty.rhodes.edu$\backslash$seaton$\backslash$symp\_red$\backslash$}
(use the link to the file \texttt{HilbertSeriesS1.nb}).
To give the reader an impression of what can be achieved with the notebook, let us consider
the weight vector $A=(191,192,193)\in \Z^3$. The calculation of the on-shell Hilbert series takes about
$90$ minutes on a PC. The numerator polynomial is of degree $1150$, and it takes two A4 pages to write it down.
The denominator polynomial is $(1 - z^2)(1 - z^{383})(1 - z^{384})(1 - z^{385})$.
For $n \leq 5$ and weights bounded by $10$, the evaluation takes a few seconds.

\section{Laurent expansion of $\Hilb_A^{\on}(x)$}
\label{sec:Laurent}

Throughout this section,
\[
    \Hilb_A^{\on}(x)
    =
    \sum\limits_{k=0}^\infty \gamma_k (1 - x)^{k + 2 - 2n}
\]
denotes the Laurent expansion of the on-shell Hilbert series. We shall occasionally write $\gamma_k(A)$ in order to stress the dependence of $\gamma_k$ on the data $A$. Our aim in this section is to prove the following statement.

\begin{theorem}
\label{thrm:Laurent}
Let $A = (a_1, \ldots, a_n)\in \Z^n$ be a nonzero weight vector such that $\gcd(A):=\gcd(a_1,\dots,a_n)=1$. Put $\alpha_j:=|a_j|$ for $j=1,\dots,n$ and write $\bs \alpha=(\alpha_1,\dots,\alpha_n)\in \mathbb N^n$. Let $\bs \alpha_{\widehat j}\in \mathbb N^{n-1}$
be the vector obtained from $\bs \alpha$ by omitting  $\alpha_j$ and set $g_j:=\gcd(\bs \alpha_{\widehat j})$.
 Then
\begin{align*}
    \gamma_0(A)
    &=
    \frac{s_{(n-2,n-2,n-3,\ldots,1,0)}(\bs \alpha)}
        {s_{(n-1,n-2,\ldots,1,0)}(\bs \alpha)},
    \quad\quad
    \gamma_1(A) = 0,
    \quad\quad\mbox{and}
    \\
    \gamma_2(A)= \gamma_3(A)
    &=
    \frac{\gamma_0}{12}
    +
    \frac{s_{(n-3,n-3,n-3,n-4,\ldots,1,0)}(\bs \alpha)}
        {12\:s_{(n-1,n-2,\ldots,1,0)}(\bs \alpha)}S_{\bs \alpha}
    \\
    &\quad\quad +
    \sum\limits_{j=1}^n
        \frac{(g_j^2 - 1)s_{(n-3,n-3,n-4,\ldots,1,0)}(\bs\alpha_{\widehat j})}
        {12\:s_{(n-2,n-3,\ldots,1,0)}(\bs \alpha_{\widehat j})},
\end{align*}
where $S_{\bs \alpha} = \sum_{i=1}^n \alpha_i^2$, and
$s_\lambda$ denotes the Schur polynomial associated to the
partition $\lambda$ (see Appendix \ref{ap:schur}).
\end{theorem}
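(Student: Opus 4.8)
\emph{Plan of proof.} Since the Laurent coefficients $\gamma_k$ do not depend on the signs of the weights, I work throughout with $a_i=\alpha_i>0$. The strategy is to read $\gamma_0,\gamma_1,\gamma_2,\gamma_3$ off the explicit double sum of Theorem \ref{thrm:HilbSeriesGeneral}: in the generic case $|\alpha_i|\neq|\alpha_j|$ one may take $c_i=\alpha_i$ and no limit is needed, and I will present the argument there; the degenerate case follows by carrying out the same computation on the summands of Theorem \ref{thrm:HilbSeriesGeneral} for real generic $C$ and letting $C\to A$, which is legitimate because the closed forms below are ratios of Schur polynomials and are continuous in the weights (the denominator $s_{(n-1,\ldots,1,0)}(\bs\alpha)=\prod_{i<j}(\alpha_i+\alpha_j)$ being nonzero). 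Two of the four assertions come for free: replacing $x$ by $x^{-1}$ in the formula of Theorem \ref{thrm:HilbSeriesGeneral}, clearing each factor $1-\zeta^{\pm a_j}x^{-(\cdots)}$ (which produces a monomial $x^2$ per index $j\neq i$ and interchanges $\zeta\leftrightarrow\zeta^{-1}$) and re-indexing the inner sum over $a_i$-th roots of unity shows $\Hilb_A^{\on}(x^{-1})=x^{2n-2}\,\Hilb_A^{\on}(x)$ (equivalently, the on-shell ring $\R[V]^{\Sp^1}/I_J^{\Sp^1}$ is Gorenstein with the appropriate $a$-invariant). Substituting $\Hilb_A^{\on}(x)=\sum_k\gamma_k(1-x)^{k+2-2n}$ into this functional equation and using $1-x^{-1}=-(1-x)/x$ shows that $\sum_k\gamma_k w^k$ is invariant under $w\mapsto -w/(1-w)$; comparing the coefficients of $w^1$ and $w^3$ forces $\gamma_1=0$ and $\gamma_2=\gamma_3$, so it remains only to compute $\gamma_0$ and $\gamma_2$.

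For the local analysis at $x=1$ I set $u=1-x$ and use $\dfrac{1}{1-(1-u)^p}=\dfrac{1}{pu}\Bigl(1+\tfrac{p-1}{2}u+\tfrac{p^2-1}{12}u^2+O(u^3)\Bigr)$. The summand indexed by $(i,\zeta)$ has a pole at $u=0$ of order $2\,\#\{j\neq i:\zeta^{a_j}=1\}$. Because $\gcd(A)=1$, the $\zeta=1$ summands have the maximal pole order $2n-2$, while every summand with $\zeta\neq 1$ has pole order at most $2n-4$, with equality exactly when the order $d>1$ of $\zeta$ divides $a_j$ for all $j$ except a single index $\ell$ (necessarily $\neq i$); in that case $d\mid g_\ell$ and $\gcd(g_\ell,\alpha_\ell)=\gcd(A)=1$. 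Consequently only the $\zeta=1$ terms reach orders $u^{-(2n-2)}$ and $u^{-(2n-3)}$; the leading coefficient of the $i$-th such term is $C_i:=\alpha_i^{2n-3}\big/\prod_{j\neq i}(\alpha_i^2-\alpha_j^2)$, and its $u^1$-correction vanishes (the pairwise cross terms cancel). Summing the $C_i$ and invoking the bialternant evaluation
\[
    \sum_{i=1}^m\frac{\beta_i^{2m-3}}{\prod_{j\neq i}(\beta_i^2-\beta_j^2)}
    =\frac{s_{(m-2,m-2,m-3,\ldots,1,0)}(\bs\beta)}{s_{(m-1,m-2,\ldots,1,0)}(\bs\beta)}
\]
(which I would prove, with $m=n$, by the same ``an alternating polynomial is divisible by the Vandermonde'' device used in the proof of Lemma \ref{lem:HilbSeriesDegen2Removable}, together with $s_{(m-1,\ldots,1,0)}(\bs\beta)=\prod_{i<j}(\beta_i+\beta_j)$, see \cite[Section I.3]{MacdonaldBook}) gives the stated formula for $\gamma_0$.

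For $\gamma_2$ I collect the $u^{-(2n-4)}$-coefficients of all summands. The $\zeta=1$ terms contribute, through the $u^2$-coefficient of the expansion above, $\sum_i C_i\bigl(\tfrac1{12}-\tfrac{S_{\bs\alpha}}{12\alpha_i^2}\bigr)$ with $S_{\bs\alpha}=\sum_i\alpha_i^2$; here the quadratic terms and all cross terms in the product of $2(n-1)$ factors conspire into this clean expression. Using $\sum_i C_i=\gamma_0$ together with the companion identity
\[
    \sum_{i=1}^m\frac{\beta_i^{2m-5}}{\prod_{j\neq i}(\beta_i^2-\beta_j^2)}
    =-\frac{s_{(m-3,m-3,m-3,m-4,\ldots,1,0)}(\bs\beta)}{s_{(m-1,m-2,\ldots,1,0)}(\bs\beta)}
\]
(with $m=n$; the minus sign comes from the single column transposition needed to order the exponents decreasingly) produces the first two terms of $\gamma_2$, namely $\tfrac{\gamma_0}{12}+\tfrac{s_{(n-3,n-3,n-3,n-4,\ldots,1,0)}(\bs\alpha)}{12\,s_{(n-1,n-2,\ldots,1,0)}(\bs\alpha)}S_{\bs\alpha}$. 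The $\zeta\neq 1$ terms of pole order exactly $2n-4$ are organized by triples $(\ell,\zeta,i)$ with $\zeta^{g_\ell}=1$, $\zeta\neq1$, $i\neq\ell$; such a term has leading coefficient $\alpha_i^{2n-5}\big/\bigl((1-\zeta^{\alpha_\ell})(1-\zeta^{-\alpha_\ell})\prod_{j\neq i,\ell}(\alpha_i^2-\alpha_j^2)\bigr)$. Summing over $i\neq\ell$ is the first bialternant identity in the $n-1$ variables $\bs\alpha_{\widehat\ell}$, giving $s_{(n-3,n-3,n-4,\ldots,1,0)}(\bs\alpha_{\widehat\ell})\big/s_{(n-2,n-3,\ldots,1,0)}(\bs\alpha_{\widehat\ell})$, and summing over $\zeta$ collapses via the classical identity $\sum_{k=1}^{g-1}\frac{1}{4\sin^2(\pi k/g)}=\frac{g^2-1}{12}$ (valid because $\gcd(g_\ell,\alpha_\ell)=1$, so $\zeta\mapsto\zeta^{\alpha_\ell}$ permutes the nontrivial $g_\ell$-th roots of unity) to $\frac{g_\ell^2-1}{12}$. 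Adding the two contributions yields the stated $\gamma_2$, and then $\gamma_3=\gamma_2$.

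The main obstacle is the bookkeeping behind $\gamma_2$: pinning down the constant $\tfrac1{12}$ requires carefully tracking the quadratic coefficients and all pairwise cross terms in a product of $2(n-1)$ factors and then recognizing the resulting symmetric rational function as a ratio of Schur polynomials, and the $\zeta\neq1$ contributions must be enumerated without omission or double counting --- the key structural fact being that $\gcd(A)=1$ makes the exceptional index $\ell$ unique and forces $\gcd(g_\ell,\alpha_\ell)=1$, which is precisely the hypothesis that both the bialternant reduction and the trigonometric sum require. Getting the sign right in the second bialternant identity is the other delicate point on which the final formula depends.
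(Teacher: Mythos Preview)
Your argument is correct and tracks the paper's proof closely: you expand each summand of Theorem~\ref{thrm:HilbSeriesGeneral} at $x=1$, classify the $(i,\zeta)$-terms by the order of their pole (using $\gcd(A)=1$ exactly as in Lemma~\ref{lem:LaurentC}), evaluate the root-of-unity sums via the identity $\sum_{\zeta^{g}=1,\zeta\neq 1}\zeta/(1-\zeta)^2=-(g^2-1)/12$, and recognize the resulting rational symmetric functions as ratios of Schur polynomials (this is Lemma~\ref{lem:LaurentCSchur}). The continuity step for degenerate $A$ is also the paper's, though the paper spells out why $\lim_{C\to A}\gamma_k(C)$ really equals the Laurent coefficient of $\Hilb_A^{\on}$ (a dominated-convergence argument on the contour integral for $\gamma_k$); your appeal to Theorem~\ref{thrm:HilbSeriesGeneral} plus continuity of the Schur ratios is the right idea but glosses over this interchange of limits.

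The one genuine difference is how you obtain $\gamma_1=0$ and $\gamma_2=\gamma_3$. The paper gets both by brute force in the Laurent expansion (observing that the $u^1$-correction vanishes for each $\zeta=1$ term and that the $u^2$ and $u^3$ coefficients coincide term-by-term, including after the Gessel sums). You instead derive the functional equation $\Hilb_A^{\on}(x^{-1})=x^{2n-2}\Hilb_A^{\on}(x)$ directly from the formula and read both relations off the invariance of $\sum_k\gamma_k w^k$ under $w\mapsto -w/(1-w)$. This is a clean shortcut: it avoids the cross-term bookkeeping you flag as delicate for $\gamma_3$, and it explains conceptually (Gorenstein symmetry) why those two relations hold---indeed it yields the whole sequence $(\mathrm{S}_m)$ of Definition~\ref{def:SymplecticType} for circle quotients, which the paper only conjectures in general.
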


We will also establish the following.


\begin{corollary}
\label{cor:LaurentPositive}
Under the assumptions of Theorem \ref{thrm:Laurent} we have $\gamma_0(A) > 0$ and $\gamma_2(A) = \gamma_3(A) > 0$.
\end{corollary}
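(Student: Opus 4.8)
The plan is to establish positivity of each of the two quantities $\gamma_0(A)$ and $\gamma_2(A)=\gamma_3(A)$ directly from the explicit formulas in Theorem \ref{thrm:Laurent}, reducing everything to positivity statements about Schur polynomials evaluated at positive real arguments. The key elementary fact is that for a partition $\lambda$, the Schur polynomial $s_\lambda$ has non-negative integer coefficients (being a sum of monomials over semistandard Young tableaux), so $s_\lambda(\bs\alpha)>0$ whenever all $\alpha_i>0$; since $\gcd(A)=1$ forces $A\neq 0$, after discarding any zero weights (which, as noted in Section \ref{sec:Background}, do not affect the Hilbert series) we may assume every $\alpha_i\geq 1$. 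For $\gamma_0(A)$ this already finishes the job: it is a ratio of two Schur polynomials, both positive, hence $\gamma_0(A)>0$.

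For $\gamma_2(A)=\gamma_3(A)$ the plan is to observe that the displayed formula exhibits it as a sum of three manifestly non-negative contributions: $\gamma_0/12$ (positive by the previous paragraph), the term $\frac{1}{12}\frac{s_{(n-3,n-3,n-3,n-4,\ldots,1,0)}(\bs\alpha)}{s_{(n-1,n-2,\ldots,1,0)}(\bs\alpha)}S_{\bs\alpha}$ (a ratio of positive Schur polynomials times the strictly positive quantity $S_{\bs\alpha}=\sum\alpha_i^2$), and the sum $\sum_{j=1}^n \frac{(g_j^2-1)s_{(n-3,n-3,n-4,\ldots,1,0)}(\bs\alpha_{\widehat j})}{12\,s_{(n-2,n-3,\ldots,1,0)}(\bs\alpha_{\widehat j})}$, in which each summand is non-negative because $g_j=\gcd(\bs\alpha_{\widehat j})\geq 1$ gives $g_j^2-1\geq 0$, while the two Schur polynomials are again positive. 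Since the first contribution $\gamma_0/12$ is strictly positive, the whole sum is strictly positive. One must check the low-dimensional edge cases: for $n=1$, $\gcd(A)=1$ forces $a_1=\pm1$ and $M_0$ is a point, so one verifies the formulas degenerate correctly (reading the ``missing'' Schur polynomials as $1$); for $n=2$ one similarly checks that the partition indices make sense and the argument still applies.

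The main obstacle, such as it is, is bookkeeping rather than mathematics: one must be careful that the partitions indexing the Schur polynomials are genuine partitions (weakly decreasing, which one reads off from the displayed tuples) and that for small $n$ the conventions make the stated expressions well-defined, in particular that $s_{(n-3,n-3,n-3,n-4,\ldots,1,0)}$ is interpreted as the empty-partition Schur polynomial $1$ when $n\leq 2$ and the corresponding term contributes harmlessly. Once these conventions are pinned down, positivity is immediate from non-negativity of Schur coefficients together with the strict positivity of the single term $\gamma_0/12$. No deeper input — no use of Theorem \ref{thrm:HilbSeriesGeneral} or the Laurent machinery — is needed beyond the already-proved Theorem \ref{thrm:Laurent}.
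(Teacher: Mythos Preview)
Your proposal is correct and matches the paper's own argument essentially verbatim: the paper deduces Corollary \ref{cor:LaurentPositive} immediately from Theorem \ref{thrm:Laurent} together with the tableau description of Schur polynomials (Lemma \ref{lem:SchurYoung}), which shows $s_\lambda(\bs\alpha)>0$ whenever all $\alpha_i>0$. Your additional remarks on zero weights and low-$n$ conventions are sensible housekeeping but not needed beyond what the paper already assumes (``$a_i>0$ for each $i$'' is adopted at the start of Section \ref{sec:Laurent}).
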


For the rest of the section, we assume without
loss of generality that $a_i > 0$ for each $i$ (hence $A=\bs\alpha$) and $\gcd(A) = 1$;
see Section \ref{sec:Background}, and recall that the last condition corresponds to the
representation being effective.

Let $C=(c_1, \ldots, c_n)$, where each $c_i$ is a positive real number and $c_i \neq c_j$ for $i \neq j$.  Define
\[
    H_i(C, \zeta) =
    \frac{1}
        {c_i
        \prod\limits_{\substack{j=1 \\ j \neq i}}^n
        (1 - \zeta^{a_j} x^{(c_i + c_j)/c_i}) (1 - \zeta^{-a_j} x^{(c_i - c_j)/c_i})}
\]
for $i = 1, \ldots, n$ and
\[
    H(C)
    =
    \sum_{i=1}^n \sum_{\zeta^{a_i}=1} H_i (C, \zeta)
\]
so that by Equation \eqref{eq:HilbSeriesGeneral},
\[
    \Hilb_A^{\on}(x)
    =
    \lim\limits_{C\to A}
    H(C).
\]
Let $\gamma_k(i, \zeta, C)$ denote the degree $k + 2 - 2n$ coefficient in the Laurent series of
$H_i(C,\zeta)$ and let $\gamma_k(C)$ denote the degree $k + 2 - 2n$ coefficient in the
Laurent series of $H(C)$.

In Subsection \ref{subsec:LaurentGeneric}, we will compute $\gamma_k(C)$ for $k = 0, 1, 2, 3$.
This yields a formula for the coefficients
of the Hilbert series in the case that $A$ is generic by evaluating at $C = A$.
In Subsection \ref{subsec:LaurentDegen}, we will demonstrate that the Laurent series
coefficients are continuous functions of $C$ and use this to derive formulas in the general case.

\begin{remark}
\label{rem:LaurentOff}
Using the relation
\[
    \Hilb_A^{\on}(x)
    =
    (1 - x^2)\Hilb_A^{\off}(x)
\]
and setting
\[
    \Hilb_A^{\off}(x)
    =
    \sum\limits_{k=0}^\infty \delta_k (1 - x)^{k + 1 - 2n},
\]
it is easy to see that $\gamma_0 = 2\delta_0$ and $\gamma_k = 2\delta_k - \delta_{k-1}$ for $k \geq 1$.
Hence, the first four coefficients of the off-shell Laurent series can be computed directly
from the first four coefficients of the on-shell Laurent series. In view of Theorem \ref{thrm:GFinLaurent}
we would like to make the following observation.
\end{remark}

\begin{corollary}
\label{cor:LaurentOffShell}
As $\delta_1=\delta_0/2=\gamma_0/4>0$, there cannot exist a $\Z$-graded regular symplectomorphism
from the  full circle quotient $V/\Sp^1$  to some quotient $\C^{n-1}/\Gamma$ of a
finite subgroup $\Gamma \subset \U_{n-1}$.
\end{corollary}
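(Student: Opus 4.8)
The plan is to compare the orders of pole at $x=1$, and then the leading Laurent coefficients, of the two Hilbert series in question. First I would recall that $V/\Sp^1$ is an affine variety of Krull dimension $2n-2$ (the generic orbit of the free $\Sp^1$-action on the open dense locus of $V=\C^n$ being one-dimensional, since $\gcd(A)=1$ makes the action effective and the weights are nonzero). Likewise $\C^{n-1}/\Gamma$ has Krull dimension $2(n-1)=2n-2$. For a finitely generated graded algebra of Krull dimension $d$, its Hilbert series is a rational function whose only pole on the unit circle is at $x=1$, and that pole has order exactly $d$; equivalently the Laurent expansion at $x=1$ begins in degree $-d$. So both $\Hilb^{\off}_A(x)$ and $\Hilb_{\C^{n-1}/\Gamma|\R}(x)$ have a pole of order $2n-2$ at $x=1$, i.e. their Laurent expansions in powers of $(1-x)$ start at $(1-x)^{-(2n-2)} = (1-x)^{1-2n+ \text{(shift)}}$ — matching the indexing $\delta_k(1-x)^{k+1-2n}$ used in Remark \ref{rem:LaurentOff}, whose leading term $\delta_0 (1-x)^{1-2n}$ already has the wrong exponent unless... — here I must be careful: $\R[V]$ has the grading where $\langle v,v\rangle$ has degree $2$, so a quadratic generator contributes a pole factor $1/(1-x^2)$, and $n$ pairs $z_j,\bar z_j$ give $\Hilb^{\off}_A$ a pole of order $2n$ at $x=1$, reduced by... no: the $\Sp^1$-invariants form a ring of Krull dimension $2n-1$, so the pole order is $2n-1$, and the leading term is indeed $(1-x)^{-(2n-1)} = (1-x)^{1-2n}$, consistent with $\delta_0\ne 0$. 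The key point for the argument is that a \emph{graded} regular isomorphism $\R[V/\Sp^1]\xrightarrow{\sim}\R[\C^{n-1}/\Gamma]$ would force equality of Hilbert series \emph{after accounting for the grading shift}, and in particular equality of the leading Laurent coefficients.

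The next step is the arithmetic heart of the matter. Suppose for contradiction such a $\Z$-graded regular symplectomorphism exists; then in particular the graded algebras $\R[V/\Sp^1]$ and $\R[\C^{n-1}/\Gamma]$ are isomorphic as graded rings, so their Hilbert series coincide, hence their Laurent expansions at $x=1$ coincide term by term, so $\delta_0 = \delta_0^\Gamma$ where $\delta_0^\Gamma$ is the leading Laurent coefficient of $\Hilb_{\C^{n-1}/\Gamma|\R}(x)$. But the leading Laurent coefficient of the Hilbert series of the ring of real invariants $\R[\C^{n-1}]^\Gamma \cong \R[\C^{n-1}/\Gamma]$ equals $1/|\Gamma|$ times the reciprocal of the product of the degrees of a homogeneous system of parameters... more precisely, by the standard computation (or by Molien's formula, taking the residue at $x=1$) the leading Laurent coefficient of $\Hilb_{\C^{n-1}/\Gamma|\R}(x)$ in powers of $(1-x)$ is $\frac{2^{-(2n-2)}}{|\Gamma|}$ or, in the appropriate normalization, a \emph{strictly negative} quantity because the sign of the leading Laurent coefficient of a pole of order $2n-2$ in the variable $(1-x)$ alternates with the parity of the pole order — that is the content I would extract from Theorem \ref{thrm:GFinLaurent} (the finite-group Laurent expansion computed in Section \ref{sec:GFin}). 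The conclusion then follows by sign: Corollary states $\delta_1 = \delta_0/2 = \gamma_0/4 > 0$ (using Corollary \ref{cor:LaurentPositive} for $\gamma_0>0$ and the relation $\gamma_0 = 2\delta_0$ from Remark \ref{rem:LaurentOff}), whereas the corresponding leading coefficient on the finite-group side has the opposite sign, so the two Hilbert series cannot agree, contradiction.

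Thus the skeleton is: (i) both quotients have the same Krull dimension, so a graded isomorphism would equate the full Laurent expansions at $x=1$ — in particular the two leading coefficients $\delta_0$, and indeed the two \emph{second} coefficients $\delta_1$; (ii) by Remark \ref{rem:LaurentOff} together with Corollary \ref{cor:LaurentPositive}, on the symplectic side $\delta_0 = \gamma_0/2>0$ and $\delta_1=\delta_0/2=\gamma_0/4>0$; (iii) on the finite-group side the analogous coefficient (whichever of $\delta_0,\delta_1$ one pins down via Theorem \ref{thrm:GFinLaurent}, or more robustly the sign pattern forced by Molien/residue considerations) cannot match this positivity. The main obstacle — and the step I would spend the most care on — is (iii): one must be sure that the relevant leading/subleading Laurent coefficient of $\Hilb_{\C^{n-1}/\Gamma|\R}$ is pinned down with the correct sign for \emph{every} finite $\Gamma\subset\U_{n-1}$, not just for special ones; this is exactly why the corollary invokes the companion computation in Theorem \ref{thrm:GFinLaurent}, where the first several finite-group Laurent coefficients are expressed explicitly and their signs read off. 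A secondary point requiring attention is verifying that a $\Z$-graded regular symplectomorphism really does induce a grading-preserving ring isomorphism on real regular functions with \emph{matching} grading conventions, so that the Hilbert series are literally equal (and not merely equal up to a substitution $x\mapsto x^c$); this is where one uses that the symplectomorphism is required to be \emph{$\Z$-graded} in the sense of \cite{FarHerSea}, together with the fact that the degree-$2$ graded piece on each side is nonzero and identifiable (it contains the squared norm / its image), fixing the grading normalization uniquely.
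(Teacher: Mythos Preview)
Your skeleton (i)--(ii) is fine, but step (iii) contains a genuine error that makes the argument fail. You claim the leading Laurent coefficient on the finite-group side is ``a strictly negative quantity because the sign of the leading Laurent coefficient of a pole of order $2n-2$ in the variable $(1-x)$ alternates with the parity of the pole order.'' This is false: the Hilbert series of an $\N$-graded algebra has non-negative Taylor coefficients, and its leading Laurent coefficient at $x=1$ is always strictly positive---indeed Theorem~\ref{thrm:GFinLaurent} gives $\gamma_0 = 1/|\Gamma| > 0$. There is no sign alternation, so the ``positivity vs.\ negativity'' contradiction you propose simply does not exist.

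The point you are missing is that the obstruction is \emph{vanishing}, not sign. Theorem~\ref{thrm:GFinLaurent} says that for every finite $\Gamma\subset\U_m$ the second Laurent coefficient of $\Hilb_{\R[\C^m]^\Gamma|\R}(x)$ satisfies $\gamma_1=0$; in the language of Definition~\ref{def:SymplecticType}, the Hilbert series of a finite quotient always satisfies condition $(\operatorname{S}_1)$. On the off-shell circle side, Remark~\ref{rem:LaurentOff} together with $\gamma_1(A)=0$ from Theorem~\ref{thrm:Laurent} gives $2\delta_1-\delta_0=0$, hence $\delta_1=\delta_0/2=\gamma_0/4$, and Corollary~\ref{cor:LaurentPositive} yields $\gamma_0>0$, so $\delta_1>0$. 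Thus $\Hilb_A^{\off}(x)$ violates $(\operatorname{S}_1)$ while every finite-quotient Hilbert series satisfies it; a $\Z$-graded regular symplectomorphism would force the two Hilbert series to coincide, contradiction. That is the entire content of the corollary---no residue computations, no sign bookkeeping, and no need to analyze pole orders separately.
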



\subsection{The Laurent series coefficients of $H(C)$.}
\label{subsec:LaurentGeneric}

Here, we demonstrate the following.
\begin{lemma}
\label{lem:LaurentC}
Let $A = (a_1, \ldots, a_n)$ be an effective weight vector with each $a_i > 0$ and let
$C = (c_1, \ldots, c_n)$ where each $c_i$ is a positive real number and
$c_i \neq c_j$ for $i \neq j$.  For each $j = 1, \ldots, n$, let $g_j = \gcd\{a_k : k\neq j\}$.
Then the first four coefficients of the Laurent series of
\[
    \sum\limits_{i=1}^n
    \sum\limits_{\zeta^{a_i}=1} H_i (C, \zeta).
\]
at $x = 1$ are given by
\begin{align}
    \label{eq:LaurentGamma01}
    \gamma_0(C)
    &=
    \sum\limits_{i=1}^n \frac{c_i^{2n-3}}
        {\prod\limits_{\substack{j=1 \\ j \neq i}}^n
        (c_i^2 - c_j^2)},
    \quad\quad\quad\quad
    \gamma_1(C) = 0,
    \quad\quad\mbox{and}
    \\
    \label{eq:LaurentGamma23}
    \gamma_2(C) = \gamma_3(C)
    &=
    -\frac{1}{12}
    \sum\limits_{i=1}^n
    \frac{c_i^{2n-5}}
        {\prod\limits_{\substack{j=1 \\ j \neq i}}^n
        (c_i^2 - c_j^2)}
    \sum\limits_{\substack{j=1\\ j\neq i}}^n c_j^2
    +
    \frac{1}{12}
    \sum\limits_{j=1}^n
    \sum\limits_{\substack{i=1 \\ i \neq j}}^n
        \frac{c_i^{2n-5}(g_j^2 - 1)}
            {\prod\limits_{\substack{k=1 \\ k \neq i,j}}^n
            (c_i^2 - c_k^2)}.
\end{align}
\end{lemma}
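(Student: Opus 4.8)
The plan is to compute the Laurent expansion of each summand $H_i(C,\zeta)$ at $x=1$ directly, substituting $x = e^{-u}$ (so $1-x = u - u^2/2 + \cdots$) and expanding in powers of $u$, then summing over $i$ and over the roots of unity $\zeta$. The first reduction I would make is to separate the contribution of $\zeta = 1$ from that of the nontrivial $a_i$th roots of unity. For $\zeta = 1$, the factor $H_i(C,1)$ has a pole of order $2n-2$ at $x=1$ coming from all $2(n-1)$ factors $(1-x^{(c_i\pm c_j)/c_i})$; substituting $x=e^{-u}$ turns each factor $1 - x^{r}$ into $ru(1 - ru/2 + \cdots)$, so $H_i(C,1)$ becomes $c_i^{-1} u^{2-2n} \prod_{j\neq i}\big[(c_i+c_j)(c_i-c_j)/c_i^2\big]^{-1}$ times a power series in $u$; the leading term produces exactly $c_i^{2n-3}/\prod_{j\neq i}(c_i^2-c_j^2)$ after rewriting $u^{2-2n}$ in terms of $(1-x)^{2-2n}$, which accounts for $\gamma_0(C)$ once I check the roots-of-unity contributions don't affect the leading or subleading coefficients.

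The key structural point is that for $\zeta \neq 1$, the factor $H_i(C,\zeta)$ has a pole of strictly lower order at $x=1$: a factor $1 - \zeta^{a_j}x^{(c_i\pm c_j)/c_i}$ is singular at $x=1$ only when $\zeta^{a_j}=1$. So the order of the pole of $H_i(C,\zeta)$ at $x=1$ equals $1 + \#\{j \neq i : \zeta^{a_j}=1\}$ (the $1$ coming from $c_i$ in the denominator — wait, $c_i$ is a nonzero constant, so actually the pole order is just $\#\{j\neq i : \zeta^{a_j} = 1\}$, counted with the two factors $(c_i\pm c_j)$ each contributing when $\zeta^{a_j}=1$, hence $2\cdot\#\{j\neq i:\zeta^{a_j}=1\}$). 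For this to reach order $2n-2$, $k+2-2n$ coefficient to be nonzero for small $k$, we need $\zeta^{a_j}=1$ for all $j\neq i$, i.e. $\zeta$ is a $g_i$th root of unity where $g_i = \gcd\{a_j : j\neq i\}$. So the full sum over $\zeta^{a_i}=1$ collapses: the only terms contributing to $\gamma_k(C)$ for $k \leq 3$ (actually $k\le 2n-4$ or so) come from $\zeta$ ranging over $g_i$th roots of unity, and there are $\gcd(a_i, g_i) = \gcd(a_1,\dots,a_n) = 1$ such roots since $A$ is effective — hence only $\zeta=1$ contributes to $\gamma_0$ and $\gamma_1$. For $\gamma_2, \gamma_3$ the second-order terms of the expansion matter and I would need to track, for each $j$, the terms where $\zeta$ runs over $g_j$th roots of unity but $\zeta^{a_j}\neq 1$ in general — this is where the $(g_j^2-1)$ factors enter: summing $\sum_{\zeta^{g_j}=1}\zeta^{a_j} + \zeta^{-a_j}$-type expressions and using $\sum_{\zeta^{m}=1}1 = m$ together with the identity $\sum_{\zeta^m=1, \zeta\neq 1}$ contributions producing $(m^2-1)/12$-style constants (reminiscent of $\sum_{k=1}^{m-1} k(m-k) = (m^3-m)/6$ or the cotangent-sum identity $\sum_{\zeta^m=1,\zeta\ne1}(1-\zeta)^{-1} = (m-1)/2$ and its higher analogues).

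Concretely I would carry out: (1) write $x = e^{-u}$, expand each $1 - \zeta^{a_j}x^{r}$ as a power series in $u$, distinguishing $\zeta^{a_j}=1$ (linear leading term) from $\zeta^{a_j}\neq 1$ (nonzero constant term $1-\zeta^{a_j}$); (2) for the generic-pole term $\zeta=1$, expand $c_i^{-1}\prod_{j\neq i}\big((c_i+c_j)(c_i-c_j)c_i^{-2}\big)^{-1}\cdot u^{2-2n}\cdot(\text{series})$ to order $u^{2-2n+3}$, using $\log$ expansions to get the first four coefficients, and re-express powers of $u$ in powers of $(1-x)$ via $u = -\log x = -\log(1-(1-x)) = (1-x) + (1-x)^2/2 + \cdots$; (3) sum over the lower-order poles $\zeta\neq 1$ that still contribute, organizing by which single index $j$ fails $\zeta^{a_j}=1$, and evaluate the resulting root-of-unity sums using the standard power-sum-of-roots identities to produce the $(g_j^2-1)$ coefficients; (4) combine, check $\gamma_1(C) = 0$ and $\gamma_2(C) = \gamma_3(C)$ as a consequence of the particular shape of the $u\mapsto(1-x)$ change of variables (this equality is essentially the statement that the relevant Laurent coefficients of $(1-x^2)/(1-x)^{?}$-type expressions coincide, or follows from a functional-equation symmetry $x\leftrightarrow 1/x$ of $\Hilb_A^{\on}$). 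The main obstacle I anticipate is step (3): bookkeeping the roots-of-unity sums cleanly enough to see the $(g_j^2-1)$ pattern emerge, and in particular justifying that no cross-terms with two or more indices $j$ failing $\zeta^{a_j}=1$ survive into $\gamma_2,\gamma_3$ — this requires the pole-order count above to be sharp and requires the effectiveness hypothesis $\gcd(A)=1$ in an essential way. A secondary technical nuisance will be the $x\mapsto 1/x$ ambiguity in the fractional powers $x^{(c_i-c_j)/c_i}$ (which can be negative), but since everything is assembled into the rational function $\Hilb_A^{\on}(x)$ the branch choice washes out, exactly as noted after the proof of Theorem \ref{thrm:HilbSeriesGeneral}.
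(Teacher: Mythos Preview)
Your proposal follows essentially the same route as the paper: count pole orders of each $H_i(C,\zeta)$ at $x=1$, use effectiveness ($\gcd(A)=1$) to see that only $\zeta=1$ contributes to $\gamma_0,\gamma_1$, and for $\gamma_2,\gamma_3$ organize the remaining $\zeta\neq 1$ by which single index $j$ fails $\zeta^{a_j}=1$, then evaluate the resulting root-of-unity sums. The paper is slightly more direct in two places: it expands $\frac{1}{1-x^t}$ straight in powers of $(1-x)$ (the first few coefficients being $\frac{1}{t}$, $\frac{t-1}{2t}$, $\frac{t^2-1}{12t}$, $\frac{t^2-1}{24t}$) rather than detouring through $x=e^{-u}$ and then converting $u$-powers back to $(1-x)$-powers; and it evaluates the sums $\sum_{\zeta^m=1,\,\zeta\ne1}\zeta^k/(1-\zeta)^\ell$ by citing Gessel's closed formulas, which produce exactly the $(g_j^2-1)/12$ and $(g_j^2-1)/24$ constants you anticipated and make $\gamma_2=\gamma_3$ drop out of the computation directly, without appealing to any $x\leftrightarrow 1/x$ symmetry.
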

\begin{proof}
Using the Laurent series
\begin{equation}
\label{eq:LaurentProto}
    \frac{1}{1 - x^t}
    =
    \frac{1}{t}(1 - x)^{-1}
    + \frac{t - 1}{2t}
    + \frac{t^2 - 1}{12t}(1 - x)
    + \frac{t^2 - 1}{24t}(1 - x)^2 + \cdots,
\end{equation}
the Taylor series
\[
    \frac{1}{1 - \zeta x^t}
    =
    \frac{1}{1 - \zeta}
    - \frac{t\zeta}{(\zeta - 1)^2} (1 - x)  + \cdots,
\]
and the Cauchy product formula,
it is easy to see that for each $i$ and $a_i$th root of unity $\zeta$,
$H_i(\zeta, C)$ has a pole at $x = 1$ of order $2n - 2(k + 1)$ where
$k$ is the number of $j \in \{ 1,\ldots, n\}$ such that $\zeta^{a_j}\neq 1$.
Hence, if $\zeta^{a_j} \neq 1$ for two or more values of $j$, then $H_i(\zeta, C)$ has a pole
of order strictly smaller than $2n - 6$, implying that it does not contribute to the first
four terms of the Laurent series.

As $\gcd(a_1,\ldots,a_n) = 1$, it follows that for each $a_i$th root of unity
$\zeta\neq 1$, we have $\zeta^{a_j} \neq 1$ for at least one value of $j$, and hence
$\gamma_0(i,\zeta,C) = \gamma_1(i,\zeta,C) = 0$.  So let $\zeta = 1$, and then
using Equation \eqref{eq:LaurentProto}, the first Laurent series coefficients of
\[
    H_i(1,C)
    =
    \frac{1}
        {c_i
        \prod\limits_{\substack{j=1 \\ j \neq i}}^n
        (1 - x^{(c_i + c_j)/c_i}) (1 - x^{(c_i - c_j)/c_i})}
\]
for each $i$ are given by
\begin{align}
    \nonumber
    \gamma_0(i, 1, C)
    &=
    \frac{c_i^{2n-3}}
    {\prod\limits_{\substack{j=1 \\ j \neq i}}^n
        c_i^2 - c_j^2},
    \quad\quad\quad
    \gamma_1(i, 1, C)
    =   0,
    \quad\quad\quad\mbox{and}
    \\
    \label{eq:LaurentGamma23ZetaIs1}
    \gamma_2(i, 1, C)
    &=
    \gamma_3(i, 1, C) =
    \frac{-c_i^{2n-5}}
    {a_i \prod\limits_{\substack{j=1 \\ j \neq i}}^n
        c_i^2 - c_j^2}
    \sum\limits_{\substack{j=1\\ j\neq i}}^n c_j^2.
\end{align}
Summing over $i$ yields Equation \eqref{eq:LaurentGamma01}.

Now, suppose for some fixed $j$ that $\zeta\neq 1$ is an $a_k$th root of unity for each $k \neq j$.
The set of such $\zeta$ is precisely the set of nonunit $g_j$th roots of unity where $g_j:= \gcd\{a_k : k \neq j \}$.
This set of course may be empty, and in particular is empty if $a_j$ is a degenerate weight.

In this case, $H_i(\zeta,C)$ is given for each $i \neq j$ by
\[
    \frac{1}
        {c_i
        (1 - \zeta^{a_j} x^{(c_i + c_j)/c_i})
        (1 - \zeta^{-a_j} x^{(c_i - c_j)/c_i})
        \prod\limits_{\substack{k=1 \\ k \neq i,j}}^n
        (1 - x^{(c_i + c_k)/c_i}) (1 - x^{(c_i - c_k)/c_i})},
\]
whereby one computes that
\[
    \gamma_2(i, \zeta, C)
    =
    -\frac{c_i^{2n-5}}
        {\prod\limits_{\substack{k=1 \\ k \neq i,j}}^n
            c_i^2 - c_k^2}
    \left(
        \frac{\zeta^{a_j}}{(1 - \zeta^{a_j})^2}
    \right)
\]
and
\[
    \gamma_3(i, \zeta, C)
    =
    \frac{c_i^{2n-6}(c_j - c_i)}
        {\prod\limits_{\substack{k=1 \\ k \neq i,j}}^n
            c_i^2 - c_k^2}
    \left(
        \frac{\zeta^{a_j}}{(1 - \zeta^{a_j})^3}
    \right)
    +
    \frac{c_i^{2n-6}(c_j + c_i)}
        {\prod\limits_{\substack{k=1 \\ k \neq i,j}}^n
            c_i^2 - c_k^2}
    \left(
        \frac{\zeta^{2a_j}}{(1 - \zeta^{a_j})^3}
    \right).
\]
To compute the sum over the nonunit $g_j$th roots of unity, note the following.
As $\gcd(a_1,\ldots,a_n) = 1$, it must be that $a_j$ is coprime to $g_j = \gcd\{a_k:k\neq j\}$
and hence $\zeta\mapsto\zeta^{a_j}$ is a permutation of the nonunit $g_j$th roots of unity.
Applying a Gessel's formula \cite[Corollary 3.3]{Gessel}, one obtains
\begin{align*}
    \sum\limits_{\substack{\zeta^{g_j}=1 \\ \zeta\neq 1}}
        \frac{\zeta}{(1 - \zeta)^2}
    &=
    - \frac{g_j^2 - 1}{12},
    \quad\quad\quad
    \sum\limits_{\substack{\zeta^{g_j}=1 \\ \zeta\neq 1}}
        \frac{\zeta}{(1 - \zeta)^3}
    =
    - \frac{g_j^2 - 1}{24},
    \\
    \mbox{and}\quad&\quad
    \sum\limits_{\substack{\zeta^{g_j}=1 \\ \zeta\neq 1}}
        \frac{\zeta^2}{(1 - \zeta)^3}
    =
    \frac{g_j^2 - 1}{24}.
\end{align*}
Hence
\[
    \sum\limits_{\substack{\zeta^{g_j}=1 \\ \zeta\neq 1}}
        \gamma_2(i, \zeta, C)
    =
    \sum\limits_{\substack{\zeta^{g_j}=1 \\ \zeta\neq 1}}
        \gamma_3(i, \zeta, C)
    =
    \frac{c_i^{2n-5}(g_j^2 - 1)}
        {12\prod\limits_{\substack{k=1 \\ k \neq i,j}}^n
            c_i^2 - c_k^2}.
\]
Summing over all $i$ and $j$ yields
\[
    \sum\limits_{i=1}^n
    \sum\limits_{\substack{j=1 \\ j \neq i}}
        \gamma_2(i, \zeta, C)
    =
    \sum\limits_{i=1}^n
    \sum\limits_{\substack{j=1 \\ j \neq i}}
        \gamma_3(i, \zeta, C)
    =
    \sum\limits_{i=1}^n
    \sum\limits_{\substack{j=1 \\ j \neq i}}
        \frac{c_i^{2n-5}(g_j^2 - 1)}
            {12\prod\limits_{\substack{k=1 \\ k \neq i,j}}^n
                c_i^2 - c_k^2},
\]
which may vanish if $g_j = 1$ for each $j$.  Reordering the sum
and combining this with Equation \eqref{eq:LaurentGamma23ZetaIs1} yields Equation
\eqref{eq:LaurentGamma23}, completing the proof.
\end{proof}

\begin{remark}
\label{rem:LaurentHigherCoeffs}
Computations of higher Laurent series coefficients using the above method
become more complicated and lead to sums of the form
\[
    \sum\limits_{\substack{\zeta^{a_j} = 1 \\ \zeta\neq 1}}
    \frac{\zeta^{a_j + a_k}}{a_i(1 - \zeta^{a_j})^2(1 - \zeta^{a_k})^2}.
\]
These are special cases of \emph{Fourier--Dedekind sums}, see \cite[Section 4]{BeckDiazRobins}.
In particular, the above sum corresponds to $\sigma_{a_j + a_k}(a_j, a_j, a_k, a_k; a_i)$ in the
notation of \cite{BeckDiazRobins}, and the authors are not aware of methods of computing them
in general.
\end{remark}


\subsection{Continuity of the Laurent series coefficients}
\label{subsec:LaurentDegen}

In the case that $A$ is a generic weight vector, Lemma \ref{lem:LaurentC} can be used
to compute the Laurent series coefficients by evaluating $\gamma_k(A)$ for $k = 0, 1, 2, 3$.
If $A$ is degenerate, on the other hand, then the expressions in Equations
\eqref{eq:LaurentGamma01} and \eqref{eq:LaurentGamma23} are undefined.  In this subsection,
we demonstrate that $\lim_{C\to A} \gamma_k(C)$ is defined for a degenerate weight vector
$A$ and yields the Laurent series coefficient $\gamma_k$ of $\Hilb_A^{\on}(x)$.
Specifically, we demonstrate the following.

\begin{lemma}
\label{lem:LaurentCSchur}
With notation as above,
\begin{equation}
\label{eq:LaurentSchurGamma0}
    \gamma_0(C)
    =
    \frac{s_{(n-2,n-2,n-3,\ldots,1,0)}(C)}
        {s_{(n-1,n-2,\ldots,1,0)}(C)},
\end{equation}
and
\begin{align}
\label{eq:LaurentSchurGamma23}
    \gamma_2(C) = \gamma_3(C)
    &=
    \frac{\gamma_0(C)}{12}
    +
    \frac{s_{(n-3,n-3,n-3,n-4,\ldots,1,0)}(C)}
        {12s_{(n-1,n-2,\ldots,1,0)}(C)}S_C
    \\  \nonumber
    &\quad\quad +
    \sum\limits_{j=1}^n
        \frac{(g_j^2 - 1)s_{(n-3,n-3,n-4,\ldots,1,0)}(C_j)}
        {12s_{(n-2,n-3,\ldots,1,0)}(C_j)},
\end{align}
where $S_C = \sum_{i=1}^n c_i^2$, and $C_j \in \R^{n-1}$ is given by $C$ with $c_j$ removed.
\end{lemma}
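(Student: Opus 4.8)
The plan is to rewrite the rational-function expressions for $\gamma_0(C)$ and $\gamma_2(C)=\gamma_3(C)$ from Lemma \ref{lem:LaurentC} as ratios of Schur polynomials, thereby making manifest that they extend continuously across the diagonals $c_i = c_j$; the identification with $\gamma_k$ of $\Hilb_A^{\on}(x)$ then follows from Theorem \ref{thrm:HilbSeriesGeneral}, since $\Hilb_A^{\on}(x) = \lim_{C\to A} H(C)$ and the Laurent coefficients of a limit of meromorphic functions (with uniformly bounded pole order near $x=1$, as established in the proof of Lemma \ref{lem:LaurentC}) are the limits of the Laurent coefficients. So the real content is the three algebraic identities relating the sums over $i$ in \eqref{eq:LaurentGamma01} and \eqref{eq:LaurentGamma23} to the Schur quotients in \eqref{eq:LaurentSchurGamma0} and \eqref{eq:LaurentSchurGamma23}.

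For $\gamma_0(C)$, the key observation is that $\sum_{i=1}^n c_i^{2n-3}\big/\prod_{j\neq i}(c_i^2 - c_j^2)$ is, after the substitution $u_i := c_i^2$, exactly the bialternant expression $\sum_i u_i^{n-2}/\prod_{j\neq i}(u_i - u_j)$, which is a classical Schur-polynomial (Jacobi--Trudi / Cauchy bialternant) evaluation: it equals $\det(u_i^{\lambda_j + n - j})/\det(u_i^{n-j})$ for the partition $\lambda = (n-2, n-3, \ldots, 1, 0, ?)$ — more precisely, pulling a common factor $c_i$ out front and matching exponents, one gets the Schur polynomial $s_{(n-2,n-2,n-3,\ldots,1,0)}$ evaluated at $C$ over $s_{(n-1,n-2,\ldots,1,0)}(C)$, the denominator being the odd-Vandermonde $\prod_{i<j}(c_i^2 - c_j^2)\cdot\prod_i c_i = \prod_{i<j}(c_i - c_j)(c_i + c_j)\cdot\prod_i c_i$ written as a Schur polynomial. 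I would verify the exponent bookkeeping by writing the sum over a common denominator, recognizing the numerator as an alternating polynomial in the $c_i$ (hence divisible by the full Vandermonde $\prod_{i<j}(c_i - c_j)$) and also antisymmetric under $c_i \mapsto -c_i$ up to the right parity (hence divisible by $\prod_i c_i$ and by $\prod_{i<j}(c_i + c_j)$), and then matching total degrees to pin down which Schur polynomial the quotient is. The same strategy handles the two new terms in $\gamma_2(C)$: the term $\sum_i \frac{c_i^{2n-5}}{\prod_{j\neq i}(c_i^2 - c_j^2)}\sum_{j\neq i} c_j^2$ combines with $\gamma_0(C)/12$ after writing $\sum_{j\neq i} c_j^2 = S_C - c_i^2$, and the resulting $S_C$-coefficient is the lower-degree Schur quotient $s_{(n-3,n-3,n-3,n-4,\ldots,1,0)}(C)/s_{(n-1,\ldots,1,0)}(C)$; the $g_j$-sum $\sum_i \sum_{j\neq i} \frac{c_i^{2n-5}(g_j^2-1)}{\prod_{k\neq i,j}(c_i^2-c_k^2)}$ is reorganized by summing the inner sum over $i$ first, which for each fixed $j$ is again a bialternant in the $n-1$ variables $C_j$ and evaluates to $(g_j^2-1)\, s_{(n-3,n-3,n-4,\ldots,1,0)}(C_j)/s_{(n-2,n-3,\ldots,1,0)}(C_j)$.

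Once the Schur-quotient formulas are proved for generic $C$ (where Lemma \ref{lem:LaurentC} applies verbatim), continuity is immediate: Schur polynomials are polynomials, and the denominators $s_{(n-1,n-2,\ldots,1,0)}(C)$ and $s_{(n-2,n-3,\ldots,1,0)}(C_j)$ are products of the $c_i$ and of $(c_i \pm c_j)$, which are nonzero and continuous in a neighborhood of any $A$ with positive entries and $\gcd(A) = 1$ — note $g_j = 1$ automatically whenever $a_j$ is a degenerate weight, so the problematic $C_j$-denominator is multiplied by a factor $g_j^2 - 1 = 0$ exactly in those cases and no spurious singularity appears. Hence $\lim_{C\to A}\gamma_k(C)$ exists and equals the Schur quotient evaluated at $A$, which is \eqref{eq:LaurentSchurGamma0}–\eqref{eq:LaurentSchurGamma23}; combined with the interchange-of-limits argument this is the assertion of Lemma \ref{lem:LaurentCSchur}, and evaluating at $C = A = \bs\alpha$ gives Theorem \ref{thrm:Laurent}. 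The main obstacle I anticipate is purely bookkeeping: correctly identifying the partitions $\lambda$ from the exponent patterns in the bialternant formulas — in particular tracking the extra $\prod_i c_i$ factors and the doubled parts (the repeated $n-2$, the tripled $n-3$) that arise because the natural variables are $c_i^2$ rather than $c_i$, so the "odd" Schur polynomials $s_\lambda$ with $\lambda$ having a staircase-with-repeats shape appear. I would set this up carefully for small $n$ first to fix the indexing conventions, then give the general argument via the alternating-polynomial divisibility plus degree count.
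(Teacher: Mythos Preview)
Your proposal is essentially the same approach as the paper's: put the sums from Lemma~\ref{lem:LaurentC} over a common denominator, recognize the numerator as an alternant, and identify the resulting ratio as a quotient of Schur polynomials; the $S_C$ trick $\sum_{j\neq i}c_j^2 = S_C - c_i^2$ and the reduction to $n-1$ variables for the $g_j$-terms are exactly what the paper does.

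One concrete bookkeeping slip: the common denominator $\prod_{j<k}(c_j^2-c_k^2)$ does \emph{not} carry an extra factor $\prod_i c_i$, and correspondingly $s_{(n-1,n-2,\ldots,1,0)}(C)=\prod_{j<k}(c_j+c_k)$ with no $\prod_i c_i$. Your parity argument ``antisymmetric under $c_i\mapsto -c_i$, hence divisible by $\prod_i c_i$'' is false here --- only the $i$th summand flips sign under $c_i\mapsto -c_i$, not the whole numerator. The paper sidesteps this by recognizing the numerator directly as $\det(c_i^{\rho_j+n-j})$ via cofactor expansion along the first column (the minors being Vandermonde determinants in the $c_k^2$), which pins down the partition $\rho=(n-2,n-2,n-3,\ldots,1,0)$ without any divisibility-plus-degree-count. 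This is cleaner than your proposed route and avoids exactly the indexing trap you anticipated; I'd recommend adopting it. The same cofactor trick, expanding along the second column, handles the $S_C$-coefficient.

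Finally, note that the continuity and interchange-of-limits discussion you include is placed by the paper \emph{after} the lemma, not inside its proof; the lemma itself is a purely algebraic identity for generic $C$.
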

\begin{proof}
Rewrite Equation \eqref{eq:LaurentGamma01} as
\[
    \gamma_0(C)
    =
    \frac{1}{\prod\limits_{1\leq j < k \leq n} c_j^2 - c_k^2}
    \sum\limits_{i=1}^n (-1)^{i-1} c_i^{2n-3}
    \prod\limits_{\substack{1 \leq j < k \leq n \\ j,k\neq i}}
        c_j^2 - c_k^2,
\]
and letting $\rho = (n-2,n-2,n-3,\ldots,1,0)$, observe that
\[
    \sum\limits_{i=1}^n (-1)^{i-1} c_i^{2n-3}
    \prod\limits_{\substack{1 \leq j < k \leq n \\ j,k\neq i}}
        c_j^2 - c_k^2
    =
    \det(c_i^{\rho_j + n-j})_{1\leq i,j \leq n}.
\]
This can be seen by considering the expansion of the determinant along the first
row of the matrix and noting that the minors that appear are Vandermonde determinants in the
variables $c_j^2$.  Hence
\[
    \gamma_0(C)
    =
    \frac{\det(c_i^{\rho_j + n - j})_{1\leq i,j \leq n}}
        {\prod\limits_{1\leq j < k \leq n} (c_j + c_k)(c_j - c_k)}
    =
    \frac{s_\rho(C)}
        {\prod\limits_{1\leq j < k \leq n} (c_j + c_k)}.
\]
A simple computation demonstrates that
\[
    s_{(n-1,n-2,\ldots,1,0}(C)
    =
    \prod\limits_{1\leq j < k \leq n} c_j + c_k,
\]
from which Equation \eqref{eq:LaurentSchurGamma0} follows.

Similarly, recalling that $S_C = \sum_{i=1}^n c_i^2$, we use Equation
\eqref{eq:LaurentGamma23ZetaIs1} to express
\begin{align*}
    \sum\limits_{i=1}^n \gamma_2(i,1,C)
    &=
    \sum\limits_{i=1}^n \gamma_3(i,1,C)
    =
    \frac{1}{12}
    \sum\limits_{i=1}^n
    \frac{c_i^{2n-5} (c_i^2 - S_C)}
        {\prod\limits_{\substack{j=1 \\ j \neq i}}^n
        c_i^2 - c_j^2}
    \\
    &=
    \frac{\gamma_0(C)}{12}
         -
        \frac{S_C}{12\prod\limits_{1\leq j < k \leq n} c_j^2 - c_k^2}
        \sum\limits_{i=1}^n
            (-1)^{i-1} c_i^{2n-5}
            \prod\limits_{\substack{1 \leq j < k \leq n \\ j,k\neq i}}
            c_j^2 - c_k^2.
\end{align*}
Interpreting the sum as a determinant as above, this time along the second row, yields
\[
    \sum\limits_{i=1}^n \gamma_2(i,1,C)
    =
    \sum\limits_{i=1}^n \gamma_3(i,1,C)
    =
    \frac{1}{12}
    \left(
        \gamma_0(C) +
        \frac{s_{(n-3,n-3,n-3,n-4,\ldots,1,0)}(C)}
            {s_{(n-1,n-2,\ldots,1,0)}(C)}S_C
    \right).
\]
Finally, applying a computation identical to that for $\gamma_0(C)$ to the final sum in Equation \eqref{eq:LaurentGamma23}
(in this case applied to $n - 1$ variables) yields that for each $j$,
\[
    \sum\limits_{\substack{i=1 \\ i\neq j}}^n
        \frac{c_i^{2n-5}}
            {\prod\limits_{\substack{k=1 \\ k \neq i,j}}^n
            c_i^2 - c_k^2}
    =
    \frac{s_{(n-3,n-3,n-4,\ldots,1,0)}(C_j)}
        {s_{(n-2,n-3,\ldots,1,0)}(C_j)}.
\]
Combining the above
yields Equation \eqref{eq:LaurentSchurGamma23}, completing the proof.
\end{proof}

Noting that the denominators in Equations \eqref{eq:LaurentSchurGamma0} and
\eqref{eq:LaurentSchurGamma23} are always positive when each $c_i > 0$, it is
clear that for $k = 0,1,2,3$,  $\gamma_k(A)$ is defined for a degenerate weight
matrix $A$.  To see that $\lim_{C\to A} \gamma_k(C)$ is equal to the Laurent
series coefficient $\gamma_k$ of $\Hilb_A^{\on}(x)$, note the following.
For fixed $k$,
\[
    \gamma_k(C) = \frac{1}{2\pi i} \int\limits_P H(C)(x - 1)^{k+3-2n} dx
\]
where $P$ can be taken to be a positively oriented circle with center $1$
and radius $\epsilon$ for sufficiently small $\epsilon$, requiring in particular that each $x \in P$
is contained in the domain of the branch of the logarithm used to define $x^z$.
Let $R$ denote the finite set $R =\{\zeta\neq 1 : \zeta^{\pm a_i} = 1, i=1,\ldots, n \}$.
Restricting the values of $(c_1, \ldots, c_n)$ to the compact set $\prod_{i=1}^n [a_i-1/2, a_i+1/2]$,
we may choose $\epsilon$ small enough so that for each $z$ with $|z - 1| < \epsilon$ and each
$i$ and $j$, $z^{(c_i \pm c_j)/c_i} \notin R$.  Then because the singularities of $H(C)$ at $c_i = c_j$
for $i \neq j$ are removable by Lemma \ref{lem:HilbSeriesDegen2Removable},
the integrand $H(C)(x - 1)^{k+3-2n}$ is a continuous function of $x$ and the $c_i$ on the set
$P \times R$, and hence is bounded.  Choosing a sequence $C(k) \to A$ and applying the
Dominated Convergence Theorem as in Subsection \ref{subsec:HilbSeriesDegen2}, one obtains
\begin{align*}
    \lim\limits_{k\to\infty}\frac{1}{2\pi i} \int\limits_P H(C(k))(x - 1)^{k+3-2n} dx
    &=
    \frac{1}{2\pi i} \int\limits_P \lim\limits_{k\to\infty} H(C(k))(x - 1)^{k+3-2n} dx
    \\
    &=
    \frac{1}{2\pi i} \int\limits_P H(A)(x - 1)^{k+3-2n} dx.
\end{align*}
Theorem \ref{thrm:Laurent} follows.  Moreover, it is an obvious consequence of
Lemma \ref{lem:SchurYoung} that each Schur polynomial has nonnegative coefficients
so that Corollary \ref{cor:LaurentPositive} follows immediately.


\subsection{The completely degenerate case}
\label{subsec:LaurentCompletDegen}

Suppose the absolute values of the weights all coincide; we assume without loss of generality that $A=(1,\ldots, 1)$.
We refer the reader to \cite{EgilssonMinHilbBases} for a
discussion of generators of the invariants. The Hilbert series of the corresponding invariants has a
particularly nice form, as we will see below.

We count the invariants directly, cf. \cite[Remark 1]{Hochster}.  It is easy to see that a monomial in
$z_1, \ldots, z_n$ and $\cc{z_1},\ldots,\cc{z_n}$ is invariant if and only if it is the product
of a monomial $p(z_1, \ldots, z_n)$ and a monomial $q(\cc{z_1},\ldots,\cc{z_n})$ such that $p$
and $q$ have the same degree.  Using the fact that the Hilbert series of polynomials in $n$
variables is given by
\[
    \frac{1}{(1 - x)^n}
    =
    \sum\limits_{k=0}^\infty {n + k - 1 \choose k} x^k,
\]
it follows that the Hilbert series of invariant polynomials in $z_1, \ldots, z_n, \cc{z_1},\ldots,\cc{z_n}$
is given by
\[
    \Hilb_A^{\off}(x)
    =
    \sum\limits_{k=0}^\infty {n + k - 1 \choose k}^2 x^k.
\]
This is easily seen to be equal to the hypergeometric function $_2F_1 (n, n, 1, x^2)$,
see e.g. \cite{BatemanTransFunBook}.  Hence, the on-shell Hilbert series is given by
\[
    \Hilb_A^{\on}(x)
    =
    \,_2F_1 (n, n, 1, x^2)(1 - x^2).
\]
Applying Euler's transformation \cite[I. 2.1.4 (23)]{BatemanTransFunBook}, we have
\[
    _2F_1 (n, n, 1, x^2)
    =
    \,_2F_1 (1-n, 1-n, 1, x^2)(1 - x^2)^{2n-1},
\]
which along with the definition of $_2F_1$ yields
\[
    \Hilb_A^{\on}(x)
    =
    \frac{1}{(1 - x^2)^{2n-2}}
    \sum\limits_{k=0}^{n-1} {n - 1 \choose k}^2 x^{2k}.
\]
In particular, a direct computation yields that
\[
    \gamma_0(A)
    =
    \frac{1}{2^{2n}} {2n \choose n},
\]
which by a simple induction argument is seen to be equal to the
$(n-1)$st coefficient in the Taylor series of $1/\sqrt{1 - x}$.
That is, letting, for each $n\ge 1$,  $A_n\in \Z^n$
be a completely degenerate weight vector, we have
\[
    \sum\limits_{n=0}^\infty \gamma_0(A_{n+1}) x^n
    =
    \frac{1}{\sqrt{1 - x}}.
\]

\section{Laurent expansion in the case of a finite subgroup of $\U_n$}
\label{sec:GFin}
The purpose of this section is to prove the following statement about the lowest Laurent coefficients of the ring of real invariants of a finite  subgroup of $\U_n$.
\begin{theorem}
\label{thrm:GFinLaurent}
Let $\Gamma$ be a finite subgroup of $\U_n$.  Let $\{ g_1, \ldots, g_r \}$
be a set of primitive pseudoreflections of $\Gamma$ (cf. Definition \ref{def:PrimPseudref}), let $\mathcal{Q}$ denote the
set of $g\in \Gamma$ with eigenvalue $1$ of multiplicity $n - 2$, and let
$\lambda_g$ and $\mu_g$ denote the two non-unit eigenvalues of $g \in \mathcal{Q}$.
Then the Laurent expansion of
the Hilbert series of the real invariants $\R[\C^n]^\Gamma$ at $x = 1$ is given by
\[
    \Hilb_{\R[\C^n]^\Gamma|\R}(x)
    =
    \sum\limits_{k=0}^\infty \gamma_k (1 - x)^{k - 2n}.
\]
where
\begin{align}
    \label{eq:GFinGamma01}
    \gamma_0    &=
        \frac{1}{|\Gamma|},  \quad\quad\quad\quad
    \gamma_1    =  0,
    \\
    \label{eq:GFinGamma23}
    \gamma_2    &=   \gamma_3    =
        \frac{1}{12 |\Gamma|} \sum\limits_{i=1}^r |g_i|^2 - 1,
    \\
    \label{eq:GFinGamma4}
    \gamma_4    &=
        \frac{1}{|\Gamma|}
        \left(
            \sum\limits_{i=1}^r
            \frac{-|g_i|^4 + 50|g_i|^2 - 49}{720}
            +
            \sum\limits_{g\in\mathcal{Q}}
            \frac{\lambda_g\mu_g}{(1-\lambda_g)^2(1-\mu_g)^2}
        \right),\quad\mbox{and}
    \\
    \label{eq:GFinGamma5}
    \gamma_5    &=
        \frac{1}{|\Gamma|}
        \left(
            \sum\limits_{i=1}^r
            \frac{-2|g_i|^4 + 40|g_i|^2 - 38}{720}
            +
            \sum\limits_{g\in\mathcal{Q}}
            \frac{2\lambda_g\mu_g}{(1-\lambda_g)^2(1-\mu_g)^2}
        \right).
\end{align}
In particular,
\begin{equation}
\label{eq:GFinHigherRelation}
    \gamma_3 - 2\gamma_4 + \gamma_5 = 0.
\end{equation}
\end{theorem}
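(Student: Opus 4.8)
The plan is to start from Molien's formula. After complexification $\R[\C^n]^\Gamma$ becomes $\C[\C^n\oplus\cc{\C^n}]^\Gamma$, and since an element $g\in\Gamma$ with eigenvalues $\lambda_1(g),\dots,\lambda_n(g)$ on $\C^n$ acts on $\cc{\C^n}$ with eigenvalues $\lambda_1(g)^{-1},\dots,\lambda_n(g)^{-1}$, one gets
\[
    \Hilb_{\R[\C^n]^\Gamma|\R}(x)
    =
    \frac{1}{|\Gamma|}\sum_{g\in\Gamma}
        \frac{1}{\prod_{i=1}^n (1 - x\lambda_i(g))(1 - x\lambda_i(g)^{-1})}.
\]
The summand indexed by $g$ has a pole at $x=1$ of order exactly $2m(g)$, where $m(g)$ is the multiplicity of the eigenvalue $1$; hence it can affect $\gamma_k$ only when $2m(g)\ge 2n-k$. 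For the coefficients $\gamma_0,\dots,\gamma_5$ this restricts attention to $m(g)\in\{n,\,n-1,\,n-2\}$, i.e.\ to the identity, the pseudoreflections, and the elements of $\mathcal Q$, and I would compute the contribution of each of these three families to the first six Laurent coefficients and then add.

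The identity contributes $\tfrac{1}{|\Gamma|}(1-x)^{-2n}$, which gives $\gamma_0=1/|\Gamma|$ and, since no pseudoreflection reaches a pole of order $2n-1$, forces $\gamma_1=0$. For a pseudoreflection $g$ with non-unit eigenvalue $\lambda$ the summand is $(1-x)^{-(2n-2)}\big((1-x\lambda)(1-x\lambda^{-1})\big)^{-1}$; setting $u=1-x$ and expanding in $u$, one uses $\tfrac{\lambda}{1-\lambda}+\tfrac{\lambda^{-1}}{1-\lambda^{-1}}=-1$ and $\tfrac{1}{(1-\lambda)(1-\lambda^{-1})}=\tfrac{-\lambda}{(1-\lambda)^2}$ to find that the coefficients of $u^0,u^1,u^2,u^3$ are $\tfrac{-\lambda}{(1-\lambda)^2}$, $\tfrac{-\lambda}{(1-\lambda)^2}$, $\tfrac{-\lambda}{(1-\lambda)^2}-\tfrac{\lambda^2}{(1-\lambda)^4}$ and $\tfrac{-\lambda}{(1-\lambda)^2}-\tfrac{2\lambda^2}{(1-\lambda)^4}$; the equality of the first two already yields $\gamma_2=\gamma_3$. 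A choice of primitive pseudoreflections $g_1,\dots,g_r$ (Definition~\ref{def:PrimPseudref}) amounts to a generator for each of the cyclic groups into which the pseudoreflections, together with the identity, decompose; over the cyclic group of order $d=|g_i|$ the non-unit eigenvalue runs through all non-trivial $d$-th roots of unity, so the pseudoreflection contributions reduce to the Gessel-type sums
\[
    \sum_{\substack{\zeta^d=1\\ \zeta\ne1}}\frac{\zeta}{(1-\zeta)^2}=-\frac{d^2-1}{12},
    \qquad
    \sum_{\substack{\zeta^d=1\\ \zeta\ne1}}\frac{\zeta^2}{(1-\zeta)^4}=\frac{(d^2-1)(d^2+11)}{720}
\]
(in the spirit of \cite{Gessel} and Lemma~\ref{lem:LaurentC}), which produce the $|g_i|$-dependent parts of $\gamma_2,\dots,\gamma_5$. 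Finally, for $g\in\mathcal Q$ with non-unit eigenvalues $\lambda_g,\mu_g$ the summand is $(1-x)^{-(2n-4)}$ times the reciprocal of a product of four linear factors: its $u^0$-coefficient is $\tfrac{1}{(1-\lambda_g)(1-\lambda_g^{-1})(1-\mu_g)(1-\mu_g^{-1})}=\tfrac{\lambda_g\mu_g}{(1-\lambda_g)^2(1-\mu_g)^2}$, giving the $\mathcal Q$-term of $\gamma_4$, and (since each conjugate pair of linear factors contributes $-1$ to the sum of the linear terms) its $u^1$-coefficient is exactly twice this, giving the $\mathcal Q$-term of $\gamma_5$. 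Collecting the three families gives \eqref{eq:GFinGamma01}–\eqref{eq:GFinGamma5}.

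For the concluding relation \eqref{eq:GFinHigherRelation} I would give the structural argument, which also explains why such relations come in an infinite family (consistent with Conjecture~\ref{conj:SymplecticTypeCompact}). The termwise identity $\big((1-\lambda_i/x)(1-\lambda_i^{-1}/x)\big)^{-1}=x^2\big((1-x\lambda_i)(1-x\lambda_i^{-1})\big)^{-1}$, coming from $(x-\lambda_i)(x-\lambda_i^{-1})=(1-x\lambda_i)(1-x\lambda_i^{-1})$, yields the functional equation $\Hilb_{\R[\C^n]^\Gamma|\R}(1/x)=x^{2n}\,\Hilb_{\R[\C^n]^\Gamma|\R}(x)$. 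Writing $\Hilb_{\R[\C^n]^\Gamma|\R}(1-t)=t^{-2n}f(t)$ with $f(t)=\sum_k\gamma_k t^k$, this is equivalent to the invariance $f(t)=f\!\left(\tfrac{-t}{1-t}\right)$ under the involution $t\mapsto -t/(1-t)=-t-t^2-t^3-\cdots$, and comparing coefficients of $t^1$, $t^3$ and $t^5$ gives exactly $\gamma_1=0$, $\gamma_2=\gamma_3$ and $\gamma_3-2\gamma_4+\gamma_5=0$. (One may also verify \eqref{eq:GFinHigherRelation} directly from the formulas just obtained: the $\mathcal Q$-contributions cancel, and for each $i$ the polynomial $60(|g_i|^2-1)-2(-|g_i|^4+50|g_i|^2-49)+(-2|g_i|^4+40|g_i|^2-38)$ vanishes identically.)

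The main obstacle is the local analysis of the pseudoreflection and $\mathcal Q$ summands to the required order together with the evaluation of the root-of-unity sums above — in particular $\sum_{\zeta^d=1,\,\zeta\ne1}\zeta^2/(1-\zeta)^4$ — and the bookkeeping that isolates exactly the $m(g)\in\{n-1,\,n-2\}$ contributions and partitions the pseudoreflections by their fixed hyperplanes. As Remark~\ref{rem:LaurentHigherCoeffs} indicates, beyond degree $5$ the analogous sums become genuine Fourier--Dedekind sums, which is why the explicit computation halts at $\gamma_5$.
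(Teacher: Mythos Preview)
Your approach to computing $\gamma_0,\dots,\gamma_5$ is essentially identical to the paper's: Molien's formula, stratifying $\Gamma$ by the multiplicity $m(g)$ of the eigenvalue~$1$, computing the local Laurent expansion of each summand, and evaluating the pseudoreflection sums via Gessel's root-of-unity identities after grouping by primitive pseudoreflections. Your intermediate expressions $\gamma_4(g)=-\lambda/(1-\lambda)^2-\lambda^2/(1-\lambda)^4$ and $\gamma_5(g)=-\lambda/(1-\lambda)^2-2\lambda^2/(1-\lambda)^4$ are just rewritten forms of the paper's $(-\lambda^3+\lambda^2-\lambda)/(1-\lambda)^4$ and $(-\lambda^3-\lambda)/(1-\lambda)^4$, and your single sum $\sum_{\zeta\ne1}\zeta^2/(1-\zeta)^4=(d^2-1)(d^2+11)/720$ packages the paper's three separate Gessel evaluations.

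Where you genuinely diverge is in the proof of the relation \eqref{eq:GFinHigherRelation}. The paper verifies $\gamma_3-2\gamma_4+\gamma_5=0$ by direct substitution of the formulas just derived. Your argument via the functional equation $\Hilb(1/x)=x^{2n}\Hilb(x)$, translated into the involution invariance $f(t)=f(-t/(1-t))$ of the Laurent numerator, is more conceptual: it derives $\gamma_1=0$, $\gamma_2=\gamma_3$, and $\gamma_3-2\gamma_4+\gamma_5=0$ simultaneously from a single symmetry, and in fact yields the full infinite family of constraints $(\mathrm{S}_m)$ for finite $\Gamma\subset\U_n$ --- which the paper only formulates as part of Conjecture~\ref{conj:SymplecticTypeCompact} and supports empirically. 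So your route to \eqref{eq:GFinHigherRelation} actually proves more than what the paper establishes, at the cost of invoking the palindromic symmetry of the Molien integrand rather than staying purely within the explicit coefficient formulas.
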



For $g \in \Gamma$, we let $g_V$ denote the corresponding element of $\mathrm{U}(V)$ and $g_W$ the
corresponding element acting on $W:=V\times \cc V$.
As in the case of reduced spaces by $\Sp^1$-actions, we use the notation
\[
    \Hilb_{\R[\C^n]^\Gamma|\R}(x)
    =
    \sum\limits_{k=0}^\infty \gamma_k (1 - x)^{k - 2n}
\]
for the Laurent expansion of the Hilbert series so that $\gamma_k$ denotes the degree
$k - 2n$ coefficient.
Recall that an element $h\in\mathrm{U}(V)$ is called a \emph{pseudoreflection} if
$V^{h}$ has complex codimension $1$, or equivalently if $h$ has eigenvalue $1$ with multiplicity
$n-1$.

The Hilbert series of $\R[V]^\Gamma$ can be computed using Molien's formula for finite groups
(see e.g. \cite{SturmfelsBook}),
which for $\Gamma$ finite expresses
\[
    \Hilb_{\R[V]^\Gamma|\R}(x) =
    \frac{1}{|\Gamma|} \sum\limits_{g \in \Gamma} \frac{1}{\det(\mbox{id} - g_W^{-1}x)}.
\]
Fixing $g \in \Gamma$, choose a basis for $V$ with respect to which $g_V$ is diagonal, say
$g_V = \diag(\lambda_1, \ldots, \lambda_n)$. The ordered set of complex conjugates
of the basis elements yields a basis for $\cc{V}$, and concatenating these bases yields a basis for
$W$ with respect to which $g_W = \diag(\lambda_1, \ldots, \lambda_n, \lambda_1^{-1},\ldots,\lambda_n^{-1})$.
It follows that the corresponding term in the above sum is given by
\begin{equation}
\label{eq:GFinMolienTerm}
    \frac{1}{\det(\mbox{id} - g_W^{-1}x)}
    =
    \prod\limits_{i=1}^{n} \frac{1}{(1 - \lambda_i x)(1 - \lambda_i^{-1} x)}.
\end{equation}
For $k = 0, 1, \ldots$, we let
\[
    \prod\limits_{i=1}^{n} \frac{1}{(1 - \lambda_i x)(1 - \lambda_i^{-1} x)}
    =
    \sum\limits_{k=0}^\infty \gamma_k(g) (1 - x)^{k - 2n}
\]
denote the Laurent series expansion of the term corresponding to $g$ so that
$\gamma_k(g)/|\Gamma|$ denotes the contribution to $\gamma_k$ of the term in Molien's
formulas corresponding to $g$.  That is, for each $k$,
\[
    \gamma_k = \frac{1}{|\Gamma|} \sum\limits_{g \in \Gamma} \gamma_k(g).
\]

From Equation \eqref{eq:GFinMolienTerm}, it is clear that if $g_V$ has eigenvalue $1$ with multiplicity
$m$, then $\det(\mbox{id} - g_W^{-1}x)^{-1}$ has a pole at $x = 1$ of order $2m$.
Letting $e$ denote the identity element of $\Gamma$, it follows that
$\gamma_0(g) = \gamma_1(g) = 0$ for each $g \neq e$, $\gamma_0(e) = 1$, and $\gamma_1(e) = 0$.  Hence,
we recover the well-known fact that $\gamma_0 = 1/|\Gamma|$ as well as $\gamma_1 = 0$, proving
Equation \eqref{eq:GFinGamma01}.

Note that each $W^{g_W}$ has even complex dimension.
Therefore, there are no $g \in \Gamma$ such that $g_W$ is a pseudoreflection.  With this observation,
Equation \eqref{eq:GFinGamma01} also follow from \cite[II. Theorem 3.23]{EncMathSciAlgGeom}.

Let $\mathcal{P} = \{ g \in \Gamma : \dim_\C(V^{g_v}) = n-1 \}$, i.e. the collection of $g$ such that $g_V$ is a pseudoreflection.
We make the following.

\begin{definition}
\label{def:PrimPseudref}
A subset $\{ g_1, \ldots, g_r \} \subseteq \mathcal{P}$ will be called \emph{a set of primitive pseudoreflections}
(for the representation of $\Gamma$ on $V$) if
\begin{itemize}
\item   for each $g \in \mathcal{P}$, $g = g_i^k$ for some $i \in \{1,\ldots,r\}$ and $k\in\Z$, and
\item   $g_i^k = g_j^\ell \neq e$ implies $i = j$ and $k \cong \ell \mod |g_i|$.
\end{itemize}
\end{definition}
It is easy to see that each $\Gamma$ admits a set of primitive pseudoreflections, which is not necessarily unique.
Given a choice $\{ g_1, \ldots, g_r\}$ of primitive pseudoreflections, we have
\[
    \mathcal{P}
    =
    \bigcup\limits_{i=1}^r
    \{ g_i^k : 1 \leq k \leq |g_i|-1 \},
\]
and for each $i$, there is a basis for $V$ with respect to which
\[
    \{ g_i^k : 1 \leq k \leq |g_i|-1 \}
    =
    \bigcup\limits_{\substack{\zeta^{|g_i|} = 1\\ \zeta\neq 1}}
    \{ \diag(\zeta, 1,\ldots,1) : \zeta^{|g_i|} = 1, \zeta \neq 1 \}.
\]

For each $g \in \mathcal{P}$ with $g_V = \diag(\lambda_g,1,\ldots,1)$ (for an appropriately chosen basis for $V$), we have
\[
    \frac{1}{\det(\mbox{id} - g_W^{-1}x)}
    =
    \frac{1}{(1 - \lambda_g x)(1 - \lambda_g^{-1} x)}(1 - x)^{2 - 2n}
\]
from which it follows that
\[
    \gamma_2(g) = \gamma_3(g) = \frac{-\lambda_g}{(1 - \lambda_g)^2},
    \quad
    \gamma_4(g) = \frac{-\lambda_g^3 + \lambda_g^2 - \lambda_g}{(1 - \lambda_g)^4},
    \quad\mbox{and}\quad
    \gamma_5(g) = \frac{-\lambda_g^3 - \lambda_g}{(1 - \lambda_g)^4}.
\]
Noting that $\gamma_2(g) = \gamma_3(g) = 0$ for $g \not\in\mathcal{P}$, and choosing a set of
primitive pseudoreflections $g_1, \ldots, g_r$, we have
\begin{align*}
    \gamma_2 = \gamma_3
    &=
    \frac{1}{|\Gamma|} \sum\limits_{g\in\mathcal{P}}
        \frac{-\lambda_g}{(1 - \lambda_g)^2}
    \\
    &=
    \frac{1}{|\Gamma|} \sum\limits_{i=1}^r \sum\limits_{\substack{\zeta^{|g_i|}=1 \\ \zeta\neq 1}}
        \frac{-\zeta}{(1 - \zeta)^2}
    \\
    &=
    \frac{1}{12 |\Gamma|} \sum\limits_{i=1}^r |g_i|^2 - 1,
\end{align*}
where the last equation follows from applying Gessel's formula \cite[Theorem 4.2]{Gessel} to compute the
sum of $\zeta/(1 - t\zeta)^2$ over all $g_i$th roots of unity $\zeta$, subtracting the term corresponding
to $\zeta = 1$, and taking the limit as $t \to 1$.  This demonstrates Equation \eqref{eq:GFinGamma23}.

Similarly, we may compute the sums of $\gamma_4(g)$ and $\gamma_5(g)$ over $g\in\mathcal{P}$ by applying
Gessel's \cite[Corollary 3.3, (3.7)]{Gessel}, which yields that for each $i$,
\begin{align*}
    \sum\limits_{\substack{\zeta^{|g_i|}=1 \\ \zeta\neq 1}}
        \frac{\zeta}{(1 - \zeta)^4}
    =
    \sum\limits_{\substack{\zeta^{|g_i|}=1 \\ \zeta\neq 1}}
        \frac{\zeta^3}{(1 - \zeta)^4}
    &=
    \frac{|g_i|^4 - 20|g_i|^2 + 19}{720},
    \quad\mbox{and}
    \\
    \sum\limits_{\substack{\zeta^{|g_i|}=1 \\ \zeta\neq 1}}
        \frac{\zeta^2}{(1 - \zeta)^4}
    &=
    \frac{|g_i|^4 + 10 |g_i|^2 - 11}{720}.
\end{align*}
It follows that
\begin{align}
\label{eq:GFinGamma4PseudRef}
    \sum\limits_{g\in\mathcal{P}}
        \gamma_4(g)
    &=
    \sum\limits_{i=1}^r \sum\limits_{\substack{\zeta^{|g_i|}=1 \\ \zeta\neq 1}}
        \frac{-\zeta^3 + \zeta^2 - \zeta}{(1 - \zeta)^4}
    \\  \nonumber
    &=
    \sum\limits_{i=1}^r
        \frac{-|g_i|^4 + 50|g_i|^2 - 49}{720},
\end{align}
and
\begin{align}
\label{eq:GFinGamma5PseudRef}
    \sum\limits_{g\in\mathcal{P}}
        \gamma_5(g)
    &=
    \sum\limits_{i=1}^r \sum\limits_{\substack{\zeta^{|g_i|}=1 \\ \zeta\neq 1}}
        \frac{-\zeta^3 - \zeta}{(1 - \zeta)^4}
    \\  \nonumber
    &=
    \sum\limits_{i=1}^r
        \frac{-2|g_i|^4 + 40|g_i|^2 - 38}{720}.
\end{align}

To complete the computations of $\gamma_4$ and $\gamma_5$, we need to consider the contributions of
those $g \in \Gamma$ such that $V^{g_V}$ has complex dimension $n - 2$.  Let
$\mathcal{Q}$ denote the set of such elements, i.e. the set of $g\in \Gamma$ such that $g_V$ has eigenvalue
$1$ with multiplicity $n-2$. Then for $g \in\mathcal{Q}$ such that $g_V = \diag(\lambda_g, \mu_g, 1, \ldots, 1)$
(for an appropriate basis), we have
\[
    \frac{1}{\det(\mbox{id} - g_W^{-1}x)}
    =
    \frac{1}{(1 - \lambda_g x)(1 - \lambda_g^{-1} x)(1 - \mu_g x)(1 - \mu_g^{-1} x)}(1 - x)^{4 - 2n}
\]
so that
\[
    \gamma_4(g) = \frac{\lambda_g\mu_g}{(1-\lambda_g)^2(1-\mu_g)^2}
    \quad\mbox{and}\quad
    \gamma_5(g) = \frac{2\lambda_g\mu_g}{(1-\lambda_g)^2(1-\mu_g)^2}.
\]
Combining this with Equations \eqref{eq:GFinGamma4PseudRef} and \eqref{eq:GFinGamma5PseudRef} yields
Equations \eqref{eq:GFinGamma4} and \eqref{eq:GFinGamma5}.
As well, a simple computation from these expressions demonstrates the relation
$\gamma_3 - 2\gamma_4 + \gamma_5 = 0$ given in Equation \eqref{eq:GFinHigherRelation}.

\section{Diophantine conditions on the weights}
\label{sec:experiments}

Next, we comment on some number theoretic questions
that show up when comparing Theorems \ref{thrm:Laurent} and
\ref{thrm:GFinLaurent}. Obviously, in order for the symplectic
quotient of the circle action associated to the weight vector
$A=(a_1,\dots,a_n)$ to be $\Z$-graded regularly symplectomorphic to a
finite quotient $\C^n/\Gamma$, it is necessary that the
following Diophantine condition holds
\begin{eqnarray}\label{Diophant}
\frac{1}{\gamma_0(A)}\in \Z.
\end{eqnarray}
In a forthcoming paper, we will show that even if condition \eqref{Diophant} holds, the  symplectic
circle quotient cannot be regularly symplectomorphic to a finite quotient (after excluding the orbifold cases
treated in \cite[Section 4.3]{FarHerSea}).
Nonetheless it appears worthwhile to note that weight vectors fulfilling condition \eqref{Diophant} are rather the
exception than the rule. For simplicity, we concentrate on the case $n=3$ and assume that the  non-zero weight vector
$A=(a_1,a_2,a_3)$ has nonnegative weights. Then
\begin{align*}
\frac{1}{\gamma_0}=\frac{(a_1+a_2)(a_1+a_3)(a_2+a_3)}{a_1a_2+a_1a_3+a_2a_3}.
\end{align*}
For positive weights, this equals
\[a_1+a_2+a_3-\frac{1}{\frac{1}{a_1}+\frac{1}{a_2}+\frac{1}{a_3}},\]
and we conclude that $\gamma_0^{-1}$ is an integer if and only if the \emph{Egyptian fraction}
$ 1/a_1+1/a_2+1/a_3$ is the reciprocal of an integer. Moreover, we see that $1/\gamma_0<a_1+a_2+a_3$.
It is not difficult to show that if $1/\gamma_0\in \Z$, the weights $a_1,a_2,a_3$ cannot be
pairwise coprime. Let us introduce the \emph{level} $a_1+a_2+a_3$ and the probability $\mathcal P(L)$
to meet a weight vector with positive weights of level $\le L$ such that $\gamma_0^{-1} \in \Z$:
\begin{align*}
    &\mathcal P(L)
    =\frac{|\{(a_1,a_2,a_3)\in\N| a_1,a_2,a_3>0; a_1+a_2+a_3\le L; \gamma_0^{-1}\in\Z\}|}
        {|\{(a_1,a_2,a_3)\in\N| a_1,a_2,a_3>0; a_1+a_2+a_3\le L\}| }
    \\
    &=|\{(a_1,a_2,a_3)\in\N|  a_1,a_2,a_3>0; a_1+a_2+a_3\le L; \gamma_0^{-1}\in\Z\}|/{L\choose 3}.
\end{align*}
The graph of the function $\mathcal P(L)$ is depicted in Figure \ref{Fig:P(L)Plot} and was obtained by a case-by-case count up to level $L=3130$.
\begin{figure}[ht]
    \centering
  \includegraphics[scale=1]{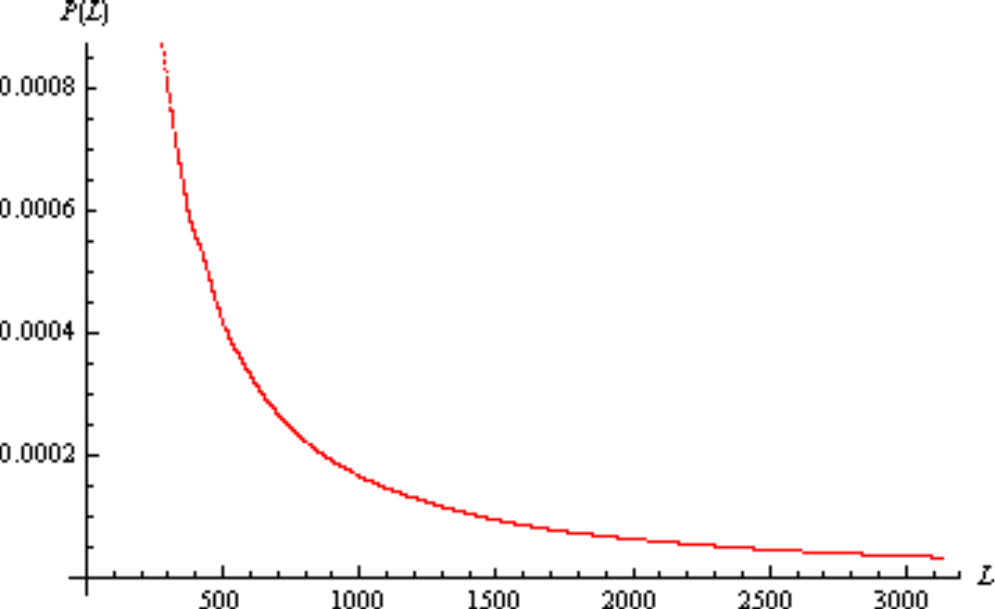}
    \caption{A plot of $\mathcal P(L)$ for small values of $L$.}
    \label{Fig:P(L)Plot}
\end{figure}
Under the assumption that, for large $L$, the probability $\mathcal P(L)$ follows approximately a power law $\mathcal P(L)\sim\alpha L^{-\beta}$,
we estimate the parameters to be $\alpha \approx 2.4105$ and $\beta \approx 1.3898$. Our experiments seem to indicate as well that
for $n\ge 4$, the probability $\mathcal P(L)$ goes to zero as $L$ goes to infinity.  We have no proof for, but believe that,
$1/\gamma_0<\sum_i a_i$ holds also for $n\ge 4$.
Due to our lack of expertise in the field, we have to leave the proof of our claims to the interested number theorist. Note
that using Equation \eqref{eq:LaurentGamma01}, it is not difficult to see that, for weight vector $A=(a,a,b)\in \Z^3$ with
$a,b>0$, $1/\gamma_0$ is never an integer. The same is true for the totally degenerate case when $n\ge 3$ (see Subsection  \ref{subsec:LaurentCompletDegen}).

Inspecting higher order Laurent coefficients (cf. Equations \eqref{eq:LaurentGamma23} and \eqref{eq:GFinGamma23}),
we note that for the symplectic circle quotient
with weight vector $A\in\Z^3$ to be regularly symplectomorphic to a $\C^2/\Gamma$ the condition
\begin{align}\label{uselessDiophant}
\frac{12\gamma_2(A)}{\gamma_0(A)}\in \Z
\end{align}
has to hold as well. This does not lead to any new insights, because our experiments suggest that
(\ref{Diophant}) implies (\ref{uselessDiophant}). However occasionally, when $1/\gamma_0$ has few
divisors, a $\Z$-graded regular symplectomorphism with an orbifold can be seen not to exist using
Equation \eqref{eq:GFinGamma23} as in the following example with $A=(4,5,20)$.
Here, $\gamma_0=1/27$ and $\gamma_2=23/162$, so that $12\gamma_2/\gamma_0=46$. It is actually
impossible to write $46$ as the sum of $(m_i^2-1)$ such that the $m_i|27$. Namely, the only
possibility is that all $m_i=3$; but $8=3^2-1$ does not divide $46$.


\section{Higher relations among the Laurent coefficients}
\label{sec:HigherRelat}

Finally, we would like to set our results in a more general context, state a conjecture,
and provide some empirical evidence.
\begin{definition}
\label{def:SymplecticType}
Let $f(x)$ be a univariate meromorphic function with pole of order $d$ at $x=1$ and consider its Laurent expansion
\[
    \sum\limits_{k=0}^\infty  \gamma_k (1 - x)^{k-d}.
\]
For each $m\ge 1$ let us introduce the linear constraint
\begin{equation}
\tag{$\operatorname S_m$}
    \sum\limits_{k=0}^{m-1} (-1)^k {m-1 \choose k} \gamma_{m+k} = 0
\end{equation}
on the coefficients $\gamma_k$. We say that $f(x)$ is \emph{symplectic of order $r$} if $(\operatorname S_m)$ holds for each $m\le r$; in this case we say for short that $f(x)$ fulfills
$(\operatorname S_{\le r})$. If $f(x)$ is symplectic of any order $r\ge 1$ we simply say that $f(x)$ is \emph{symplectic} and write $(\operatorname S_{\le \infty})$.
\end{definition}

Let us remark that odd Laurent coefficients $\gamma_3,\gamma_5,\gamma_7,\dots$ of a symplectic $f(x)$ can be reconstructed from the even Laurent
coefficients $\gamma_2,\gamma_4,\gamma_6,\dots$.

From  Theorem \ref{thrm:Laurent}, we conclude that the on-shell Hilbert series $\Hilb_{\Sp^1:V}^\on(x)$ of a unitary $\Sp^1$-representation is symplectic of order $2$.  Similarly, it follows from Theorem \ref{thrm:GFinLaurent}
that the Hilbert series $\Hilb_{\R[\C^n]^\Gamma|\R}(x)$ of real
invariants of a finite subgroup $\Gamma < \U_n$ is symplectic of order $3$.  Note that the Laurent expansion of the off-shell invariants of a
unitary circle representation do \emph{not} fulfill $(\operatorname S_1)$; see
Remark \ref{rem:LaurentOff} and Corollary \ref{cor:LaurentOffShell}.

In fact, our experiments suggest the following.

\begin{conjecture}
\label{conj:SymplecticTypeCompact}
Let $G$ be a compact Lie group, and let $G \to \U(V)$ be a unitary representation of $G$ on
a finite dimensional Hermitian vector space $V$.  Then the Hilbert series
$\Hilb_{\R[M_0]|\R}(x)$ of the graded algebra $\R[M_0]$ of regular functions on the symplectic quotient $M_0$ is symplectic.
\end{conjecture}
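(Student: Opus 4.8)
The plan is to reduce Conjecture \ref{conj:SymplecticTypeCompact} to a single \emph{graded Poincaré duality} for the algebra $R:=\R[M_0]$. Write $D$ for the order of the pole of $\Hilb_{R|\R}(x)$ at $x=1$; it equals the Krull dimension of $R$, hence $\dim_\R M_0$, and is therefore even. I claim it suffices to establish the functional equation
\[
    \Hilb_{R|\R}(1/x)=x^{D}\,\Hilb_{R|\R}(x),
\]
equivalently that $R$ is Gorenstein with canonical module $R$ shifted in degree by $-D$, the shift being trivialised by the top exterior power of the symplectic form $\omega=\Imaginary\langle\:,\:\rangle$ on the smooth locus of $M_0$. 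Granting this, write the Laurent expansion at $x=1$ as $\sum_{k\ge0}\gamma_k(1-x)^{k-D}$ and set $g(u):=\sum_{k\ge0}\gamma_k u^k$. Substituting $u=1-x$ and using $1-\tfrac1x=-u/(1-u)$, the functional equation becomes exactly the invariance $g(u)=g\big(-u/(1-u)\big)$ of $g$ under the involution $u\mapsto -u/(1-u)$; that the sign is $+$ and $D$ is even is essential here (for a minus sign, or for odd $D$, one gets a different system---compare the off-shell series of Remark \ref{rem:LaurentOff}, which fails already $(\operatorname{S}_1)$). Expanding $x^{-k}=(1-u)^{-k}$ binomially and matching the coefficients of $u^{m-D}$ for each $m\ge1$ gives a triangular system of linear relations on the $\gamma_k$; an elementary induction identifies it with the full family $(\operatorname{S}_1),(\operatorname{S}_2),\dots$, so $\Hilb_{R|\R}(x)$ is symplectic.

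I would then dispose of the two settings in which the functional equation is already within reach, as they are instructive. For $G$ finite it is immediate from Molien's formula: each $g\in\Gamma$ acts on $W=V\times\cc V$ with eigenvalues in reciprocal pairs, so $\det(\id-x^{-1}g_W)^{-1}=x^{2n}\det(\id-xg_W)^{-1}$ for every $g$, whence $\Hilb_{\R[\C^n]^\Gamma|\R}(1/x)=x^{2n}\Hilb_{\R[\C^n]^\Gamma|\R}(x)$; thus the Hilbert series of Theorem \ref{thrm:GFinLaurent} is in fact symplectic of \emph{every} order. For $G$ a torus, $\Hilb_{\R[V]^G|\R}$ is the Hilbert series of the normal affine semigroup ring of the cone $\N^{2n}\cap\Kern(A\mid -A)$; this ring is Gorenstein because its relative interior is the translate of the cone by $(1,\dots,1)$, of degree $2n$ (Hochster; Stanley), and since for abelian $G$ the invariant ideal $I_Z^G$ is---in the favourable cases---generated by the $\dim G$ quadratic components of the moment map, which form a regular sequence, $R$ remains Gorenstein and tracking the grading shifts yields $\Hilb_{R|\R}(1/x)=x^{D}\Hilb_{R|\R}(x)$. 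In particular this reproves, and upgrades to all orders, the constraints $\gamma_1=0$, $\gamma_2=\gamma_3$ of Theorem \ref{thrm:Laurent} for $G=\Sp^1$ (though not the explicit Schur-polynomial formulas, which still require the residue computation of Section \ref{sec:Laurent}).

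The genuine obstacle is the general non-abelian case, e.g.\ $G=\SU(2)$. There $Z$ need not be a complete intersection, $I_Z^G$ is in general strictly larger than the ideal generated by the components of $J$ and is not cut out by a regular sequence, and the passage from $\R[V]^G$ to $R$ breaks the naive ``Gorenstein plus regular sequence'' bookkeeping; even the Cohen--Macaulayness of $R$ is delicate here and entangled with the questions of \cite{HerbigIyengarPflaum}. I see two plausible routes. The homological one: show directly that the dualizing complex of $R$ is $R$, placed in cohomological degree $D$ with internal shift $-D$---that is, establish a graded Poincaré duality for $M_0$ regarded as a singular Poisson scheme whose smooth locus is symplectic, in the spirit of the theory of symplectic singularities; here the canonical module should be trivialised by the top power of $\omega=\Imaginary\langle\:,\:\rangle$, and the shift $-D$ is forced by the fact that $\omega$ pairs the two degree-one subspaces $V$ and $\cc V$. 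The analytic one: produce a Weyl-integration/residue formula for $\Hilb_{R|\R}(x)$---an integral over $G$ of a residue in auxiliary fugacity variables, with the quadratic moment map contributing a factor of its own that is symmetric under $x\mapsto1/x$---and read the functional equation off from $\det(\id-x^{-1}g)=x^{-2n}\det(\id-xg)$ together with the transformation of the moment-map factor; the difficulty is to make such a formula rigorous precisely when $Z$ is singular and the relation $\Hilb^{\on}=(1-x^2)^{\dim G}\Hilb^{\off}$ fails. In either approach the whole content of the conjecture is the single assertion that $\R[M_0]$ carries this graded duality for every compact $G$, and it is that---not any finite list of the relations $(\operatorname{S}_m)$---that I would aim to prove.
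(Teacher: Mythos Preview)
This statement is a \emph{conjecture} in the paper, not a theorem; the paper offers no proof. What the paper does establish is $(\operatorname{S}_{\le 2})$ for $G=\Sp^1$ (Theorem~\ref{thrm:Laurent}) and $(\operatorname{S}_{\le 3})$ for finite $G\subset\U_n$ (Theorem~\ref{thrm:GFinLaurent}), in both cases by direct residue computation of the first few Laurent coefficients, together with a substantial body of numerical evidence in Section~\ref{sec:HigherRelat}. So there is no ``paper's own proof'' to compare against, and your proposal should be read as a research strategy rather than as a rival argument.

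That said, your reduction is correct and goes well beyond what the paper proves. The equivalence between the functional equation $\Hilb_{R|\R}(1/x)=x^{D}\Hilb_{R|\R}(x)$ (with $D$ even) and the full family $(\operatorname{S}_m)_{m\ge 1}$ via the involution $u\mapsto -u/(1-u)$ on $g(u)=\sum_k\gamma_k u^k$ is valid; one checks, as you indicate, that the relations obtained by matching coefficients are exactly the $(\operatorname{S}_m)$ up to an invertible triangular change. Your finite-group argument is also complete: since every $g_W$ has eigenvalues in reciprocal pairs, each Molien term satisfies $\det(\id-x^{-1}g_W)^{-1}=x^{2n}\det(\id-xg_W)^{-1}$, and summing gives the functional equation with $D=2n$. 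This actually \emph{proves} Conjecture~\ref{conj:SymplecticTypeCompact} in full for finite $G\subset\U_n$, strictly stronger than Theorem~\ref{thrm:GFinLaurent}. For tori your sketch is essentially right but elides a sign: one finds $\Hilb_A^{\off}(1/x)=(-1)^\ell x^{2n}\Hilb_A^{\off}(x)$, and it is the passage through $(1-x^2)^\ell$ that absorbs the $(-1)^\ell$ to yield $\Hilb_A^{\on}(1/x)=x^{2n-2\ell}\Hilb_A^{\on}(x)$; this is consistent with the palindromic numerators visible in the paper's explicit examples. So your torus argument, once this sign bookkeeping is made explicit (and the case where all weights have one sign is disposed of as $M_0=\{\mathrm{pt}\}$), also proves the full conjecture for abelian $G$, again stronger than Theorem~\ref{thrm:Laurent}.

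Where you do \emph{not} have a proof---and you say so---is the general nonabelian case. Your two proposed routes (graded Poincar\'e duality for $\R[M_0]$ as a singular symplectic variety; a Weyl-integration residue formula respecting $x\mapsto 1/x$) are reasonable, but neither is carried out, and each hides the same genuine difficulty the paper flags: when $I_J\subsetneq I_Z$ (e.g.\ $\SU_2:2V_1$, Subsection~\ref{counterexample}) the on-shell ring is not Gorenstein and its Hilbert series violates $(\operatorname{S}_2)$, so any argument must use the passage to $I_Z^G$ in an essential way, and there is at present no general description of $I_Z^G$ or of the Gorenstein property of $\R[M_0]$ for arbitrary compact $G$. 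In short: your proposal correctly identifies the structural statement equivalent to the conjecture, settles it for finite and abelian $G$ by a route the paper does not take, and is honest that the remaining step---establishing the graded duality for $\R[M_0]$ in general---is open.
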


Note that if $G$ is finite, the symplectic quotient is simply $V/G$; hence the conjecture elaborates on the findings of
Theorem \ref{thrm:Laurent} as well as on those of Theorem \ref{thrm:GFinLaurent}. If it turns out that (possibly under some
topological assumptions) every unitary representation has this
property, then one should strive to find a conceptual (i.e. symplectic) proof that circumvents the
constructive problems of invariant theory.  For finite groups, we expect that the conjecture
might admit a simpler proof using computations similar to those in the proof of Theorem \ref{thrm:GFinLaurent}.

In the rest of the section, we describe the representations we
have tested to satisfy Conjecture \ref{conj:SymplecticTypeCompact}. For pragmatical reasons we have tested the conjecture up to order $\le 60$.  Computations were performed
using the software packages \emph{Mathematica} \cite{Mathematica},
\emph{Singular} \cite{Singular}, \emph{Normaliz} \cite{Normaliz}, and \emph{Macaulay2} \cite{M2}. In the case of binary forms of degree $d=3,4$, we used results of L. Bedratyuk based on his \emph{Maple} package \cite{BedratyukSL2}. The latter turned out to be more efficient than the algorithm of Derksen and Kemper \cite{DerskenKemperBook},
which we have implemented using \emph{Mathematica}.


\subsection{Tori}
\label{subsec:ExperimentsTorus}

For representations of $\Sp^1$ on a complex vector space of dimension $n$, we have
systematically checked $(\operatorname S_{\le 60})$ for generic weight vectors with small values of $n$ using the algorithm
outlined in Section \ref{sec:Algorithm} on \emph{Mathematica} \cite{Mathematica}.
For $n = 3$, we have checked all weight vectors with weights $\leq 15$; for $n = 4$, we have checked all weight vectors
with weights $\leq 12$; and for $n = 5$ and $n = 6$, we have checked all weight vectors
with weights $\leq 10$.  We have also checked several examples with larger weights.
The totally degenerate case has been tested for $n\le 50$ (and might be dealt with rigorously).
We also checked a few other degenerate cases such as $A=(-1,2,2)$, $A=(-2,1,1)$, $A=(-1,3,3)$ and $A=(-1,1,2,2)$
where we were able to calculate the Hilbert basis and its relations using the software packages
\emph{Singular} \cite{Singular} and \emph{Normaliz} \cite{Normaliz}.

Recall that a weight matrix is called \emph{simplicial} if $M_0$ is a rational homology manifold, which for circle
actions with $V^{\Sp^1} = \{ 0 \}$ corresponds to having one negative weight with all other weights positive (cf. \cite{FarHerSea});
note that symplectic orbifolds are necessarily rational homology manifolds.
As the signs of the weights of a circle representation do not affect the Hilbert series,
when $n \geq 4$, the on-shell Hilbert series of a circle representation may correspond to
several simplicial and non-simplicial weight vectors based on different choices of the signs of the weights.
For higher dimensional tori, there are more possibilities.
We tested a number of simplicial and non-simplicial cases. To name the simplest simplicial examples,
\[
    A=\left(\begin{array}{cccc}-1&0&1&1\\
        0&-1&0&1\end{array}\right) \mbox{ and }
    A=\left(\begin{array}{cccc}-1&0&1&1\\
        0&-1&1&1\end{array}\right),
\]
give the same Hilbert series
\[
    \frac{1 + 2x^2 + 2x^3 + 2x^4 + x^6}
    {(1 - x^2)^2 (1 - x^3)^2}
\]
with Laurent expansion
\[
    \frac{2}{9}\frac{1}{(1-x)^4}+\frac{11}{108}\frac{1}{(1-x)^2}+
        \frac{11}{108}\frac{1}{1-x}+\frac{49}{432}+ \frac{1}{8}(1-x)+\mathcal O\big((1-x)^2\big).
\]
To mention a non-simplicial weight matrix, we have tested
\[
    A=\left(\begin{array}{ccccc}-1& 0 &-1& 1& 1\\
    0& -1 &0 &-1& 1
    \end{array}\right)
\]
with Hilbert series
\[
    \frac{1 + 3x^2 + 6x^3 + 11 x^4 + 10 x^5 + 14 x^6 + 10 x^7 + 11 x^8 + 6 x^9 + 3x^{10} + x^{12}}
    {(1 - x^2)^2 (1 - x^3)^2 (1 - x^4)^2}
\]
and Laurent expansion
\[
    \frac{19}{144}\frac{1}{(1-x)^6}+ \frac{41}{864}\frac{1}{(1-x)^4}
    + \frac{41}{864}\frac{1}{(1-x)^3}+ \frac{407}{6912}\frac{1}{(1-x)^2}+ \frac{9}{128}\frac{1}{1-x}+\mathcal O(1).
\]


\subsection{Finite groups}
\label{subsec:ExperimentsFinite}

We tested that several finite subgroups of
$\U_1$ and $\U_2$ satisfy $(\operatorname S_{\le 60})$.  This included the subgroups of $\U_1$ (which are necessarily cyclic) of orders
$\leq 20$; the Hilbert series of these groups were computed using Molien's formula and the results of
\cite{Gessel} as in Section \ref{sec:GFin}.  In $\operatorname{SU}_2$, we tested the cyclic groups of orders
$\leq 20$ and binary dihedral groups of orders $4, 8, \ldots, 80$ using the formulas for the Hilbert
series given in \cite[Section 5.2]{FarHerSea}.  For finite subgroups of $\U_2$ not contained in $\operatorname{SU}_2$,
we checked twenty-two groups selected from types I, II, III, III$^\prime$, and IV according to the classification of finite subgroups of $\U_2$
in \cite[Chapter 10]{CoxeterBook}.  In these cases, the Hilbert bases and relations were computed using \emph{Singular} \cite{Singular}.


\subsection{Nonabelian non-discrete Lie groups}
\label{subsec:ExperimentsNonabel}

The case when $G$ is nonabelian is more intricate than the torus case for two reasons.
First, there is no easy way to pass from the off-shell to the on-shell Hilbert series.
Therefore, one has to calculate a Hilbert basis for the representation and then use elimination to calculate generators for the ideal $I_J^G\subset \R[V]^G$. Even if one manages to calculate the Hilbert basis,
the elimination might be practically very difficult.
Second, for most low dimensional representations, the inclusion $I_J \subset I_Z$
is proper, and the generators for $I_Z$ are usually unknown. Note that if the representation
of $G_\C$ on $V$ is $1$-large (see \cite{HerbigSchwarz}) we have $I_J=I_Z$, from which we can conclude that $I_J^G=I_Z^G$  and hence $\Hilb_{\R[M_0]|\R}(x)=\Hilb^\on_{G:V}(x)$.
We remark that, for dimensional reasons, most $1$-large representations are not amenable to computer calculations.
We expect that for a number of non-1-large representation $I_J^G=I_Z^G$ still holds.
There exist even  non-1-large cases (e.g. $\operatorname O_n:T^*\R^n$ for $n\ge 3$, cf. \cite{ArmsGotayJennings}) such that even $I_J=I_Z$ holds; these cases cannot be $0$-modular.
To our knowledge, the problem of when exactly we have $I_J^G=I_Z^G$ has not been systematically studied.


\subsubsection{$\operatorname{O}_n$ acting on $T^*(\R^n\oplus \R^n)$ for $n=2,3$}
\label{subsubsec:ExperimentsOn}

To explain an example of a nonabelian Lie group in detail, we consider the diagonal representation of  $\operatorname{O}_n$
on $V:=\C^{2n}=\R^{4n}=T^*(\R^n\oplus\R^n)$ for $n\ge 2$. By \cite[Theorem 3.5(4)]{HerbigSchwarz} the representation of $(\operatorname{O}_n)_\C$ on $V$ is $1$-large
 for $n=2,3$ and non-$1$-large for $n\ge 4$.
It has been observed by Huebschmann (see e.g. \cite{Hueb}) that for $n=3$,
this example appears as a local model for the stratification of the moduli space of flat
$\operatorname{SU}_2$-connection on a Riemann surface of genus $2$.
For the sake of simplicity, let us forget about the complex structure and work with real coordinates
$(\bs y_1,\bs y_2, \bs y_3,\bs y_4)\in\R^{4n}$, where $\bs y_i=(y_{i1},\dots, y_{in})\in \R^n$. The moment map $J$ of the representation is given by
\[\bs y_1\wedge \bs y_3+ \bs y_2\wedge \bs y_4:\R^{4n}\to \wedge^2 \R^n.\]
By the first fundamental theorem of invariant theory,
the Hilbert basis is given by the ten quadratic polynomials
$x_{ij}:=\bs y_i\cdot\bs y_j$ for $1\le i\le j\le 4$.
The inequalities defining $V/G$ have been worked out by Schwarz and Procesi \cite{SchwarzProcesi}.
In the case $n=3$, the second fundamental theorem tells us that there is  one relation, namely the determinant $D$ of the symmetric $4\times4$-matrix $X=(x_{ij})$. Hence $\R[V]^G$ is isomorphic to $\R[x_{ij}|\:1\le i\le j\le 4]/D$, where $x_{ij}$ have degree $2$ and $D$ has degree $8$. In the case $n=2$, the second fundamental theorem tells us that there are $10$ relations in degree $6$.

We use elimination theory in \emph{Macaulay2} \cite{M2} to compute the generators of the ideal $I_J^G\subset \R[V]^G$:
\begin{eqnarray*}
&x_{14}x_{33}-x_{13}x_{34}+x_{24}x_{34}-x_{23}x_{44}, \qquad x_{14}x_{23}+x_{24}^2-x_{12}x_{34}-x_{22}x_{44},\\
&x_{13}x_{23}+x_{23}x_{24}-x_{12}x_{33}-x_{22}x_{34}, \qquad x_{13}x_{14}+x_{14}x_{24}-x_{11}x_{34}-x_{12}x_{44},\\
&x_{13}^2-x_{24}^2-x_{11}x_{33}+x_{22}x_{44},  \qquad x_{12}x_{13}+x_{14}x_{22}-x_{11}x_{23}-x_{12}x_{24}, \\
& x_{24}^2x_{33}-2x_{23}x_{24}x_{34}+x_{22}x_{34}^2+x_{23}^2x_{44}-x_{22}x_{33}x_{44}, \\
&    x_{13}x_{24}^2+x_{24}^3+x_{14}x_{22}x_{34}-2x_{12}x_{24}x_{34}-x_{13}x_{22}x_{44}+x_{12}x_{23}x_{44}-x_{22}x_{24}x_{44}, \\
&     x_{11}x_{23}^2+2x_{12}x_{23}x_{24}+x_{22}x_{24}^2-x_{12}^2x_{33}-2x_{12}x_{22}x_{34}-x_{22}^2x_{44}, \\
 &    x_{14}^2x_{22}-2x_{12}x_{14}x_{24}+x_{11}x_{24}^2+x_{12}^2x_{44}-x_{11}x_{22}x_{44},\\
 &    x_{14}x_{22}x_{24}^2-x_{11}x_{23}x_{24}^2-2x_{12}x_{24}^3-x_{12}x_{14}x_{22}x_{34}
 +2x_{12}^2x_{24}x_{34}-x_{14}x_{22}^2x_{44}\\
 &-x_{12}^2x_{23}x_{44}+x_{11}x_{22}x_{23}x_{44}+2x_{12}x_{22}x_{24}x_{44}.
\end{eqnarray*}
We have six generators of degree $4$, four generators of degree $6$ and one generator of degree $8$.
Experimentation indicates that the result actually does not depend on $n\ge 2$, which is surprising,
because the off-shell quotient does depend on $n$.
Using \emph{Macaulay2}, we are able to compute the minimal free resolution of the ring homomorphism $\R[V]^G\to\R[V]^G/I_J\cap \R[V]^G$, which turns out to be pure.
Accordingly, the Hilbert series of the ring of on-shell invariants is
\[
    \Hilb_{\operatorname{O}_n:\R^{4n}}^\on(x)
    =\frac{1-6x^4+5x^6+5x^8-6x^{10}+x^{14}}{(1-x^2)^{10}}.
\]
The order of the pole at $x=1$ is $6$, and the first Laurent coefficients are
\[
    \gamma_0=5/32,\quad \gamma_1= 0,\quad \gamma_2=\gamma_3=\gamma_4=\gamma_5= 11/128.
\]
It is symplectic up to order $\le 60$.
We conclude that for $n=2,3$ the symplectic quotient $M_0$ is $6$-dimensional and cannot be $\Z$-graded symplectomorphic to a finite quotient. The question whether all these $M_0$ are symplectomorphic for $n\ge 2$ will be addressed elsewhere.

\subsubsection{$\operatorname{SU}_2:2V_1$}
\label{counterexample}
We include this example to illustrate that, in contrast to the Hilbert series of regular functions,
the on-shell Hilbert series is not necessarily
symplectic. We let $\operatorname{SU}_2$ act diagonally on $2V_1=V_1\oplus V_1$, where $V_1$ denotes the binary forms of degree $1$. The representation of $\operatorname{SL}_2=(\operatorname{SU}_2)_\C$ is \emph{not} $1$-large. Calculating the on-shell Hilbert series along the lines of Subsection \ref{subsubsec:ExperimentsOn}, we find
\begin{eqnarray*}
\Hilb^\on_{\operatorname{SU}_2:2V_1}(x)&=&\frac{1 - 11x^4  + 24x^6  - 21x^8  + 8x^{10}   - x^{12}}{(1 - x^2 )^6}\\
&=&\frac{1}{(1-x)^2} - \frac{3}{4} + \frac{1}{4}(1-x) + \frac{3}{16} (1-x)^2+\mathcal O((1-x)^3).
\end{eqnarray*}
This Hilbert series violates the condition $(\operatorname{S}_r)$ for $r=2$, but it does fulfill $(\operatorname{S}_r)$ for
$r\in\{1,\dots, N\}\backslash\{2\}$ when $N=100$ (and probably for arbitrary $N$). Moreover, it is not palindromic, hence the ring $\R[V]^G/I_J^G$ cannot be Gorenstein. It has been shown in \cite{ArmsGotayJennings} that the generators of $I_Z$ can be identified with the moment map of the $\operatorname{SO}_4$ action on $T^* \R^4\cong 2V_1$.
Based on this observation, it is easy to write down a $\Z$-graded regular symplectomorphism from the symplectic quotient
of the representation $\operatorname{SU}_2:2V_1$ to the finite quotient $\C/\Z_2$,
which has been checked to fulfill $(\operatorname{S}_{\le 60})$ (cf. Subsection \ref{subsec:ExperimentsFinite}).

\subsubsection{Rediscovering finite quotients}
\label{rediscover}
Let us give a (non-exhaustive) list of representations where $\Hilb_{\R[M_0]}(x)$ coincides with that of a finite quotient. For the representations $\operatorname{SO}_n:T^*\R^n$ and  $\operatorname{O}_n:T^*\R^n$ for  $n\ge 2$ as well as $\operatorname{SU}_2:2V_1$, the Hilbert series $\Hilb_{\R[M_0]}(x)$ coincides with that of the finite quotient $\C/\Z_2$.
The representations $\operatorname{SU}_2: V_3$ (respectively $\operatorname{SU}_2: V_4$) give rise to the same Hilbert series as the finite quotients $\C/\Z_4$ (respectively
$\C^2/\Gamma$ with $\Gamma$ the subgroup of $\U_2$ of type III$^\prime$ that is
denoted by $(\Z_4/1, \mathbb{D}_3/\Z_3)$ in \cite[Chapter 10]{CoxeterBook}).
These finite quotients have been checked to fulfill $(\operatorname{S}_{\le 60})$ (cf. Subsection \ref{subsec:ExperimentsFinite}).

\subsubsection{Non-orbifold symplectic quotients}
\label{nonorb}
The following $1$-large representations each give rise to a $\Hilb_{\R[M_0]}(x)$   that is symplectic of
order $\le 60$ and has a leading coefficient $\gamma_0$ such that  $1/\gamma_0$ is not an integer: $\operatorname {O}_n:T^*(2\R^n)$ for $n=2,3,4$ (cf. Subsection \ref{subsubsec:ExperimentsOn}),
$\operatorname {SO}_3:T^*(2\R^3)$, $\operatorname {O}_2:T^*(3\R^2)$, $\operatorname {U}_2:2V_1 \oplus 2V_1^*$ and $\operatorname{SU}_2:kV_1$ for $k=3,4,5,6$.
Let us remark that $\operatorname {SO}_3:T^*(2\R^3)$ gives rise to the Hilbert series that occurred in Subsection \ref{subsubsec:ExperimentsOn}.

\subsubsection{Non-1-large cases}

We have examined the on-shell Hilbert series for a couple of non-1-large representations. For example we considered the representations $\U_2: k V_1$ for $k=2,3,4,5$. For all of these  representations, the on-shell Hilbert series is palindromic and fulfills $(\operatorname{S}_{\le 60})$, which leads to the speculation  that we might have $I_Z^G=I_J^G$. For $k=2$, the on-shell Hilbert series coincides with the Hilbert series of $\C/\Z_2$. For $k=3$, it coincides with the on-shell Hilbert series of a totally degenerate $\Sp^1:\C^3$ (cf. Subsection \ref{subsec:LaurentCompletDegen}). For the representation $\SU_3: 2 V_1$, the on-shell Hilbert series is non-palindromic and violates $(\operatorname{S}_{r})$ for $r=2,3,4,5,6$ but fulfills $(\operatorname{S}_{r})$ for $r=1$ and $7\le r \le 200$. For the representation $\SU_3: 3 V_1$, the on-shell Hilbert series is non-palindromic and violates $(\operatorname{S}_{r})$ for $1\le r \le 1000$. For the representation $\SU_3: 4 V_1$, the on-shell Hilbert series is non-palindromic and fulfills  $(\operatorname{S}_{r})$ for $r=1,4$ but violates $(\operatorname{S}_{r})$ for $r\in\{1,...,200\}\backslash\{1,4\}$.

\appendix

\section{Schur polynomials}
\label{ap:schur}

Here, we briefly recall the definitions of the Schur polynomials
for the benefit of the reader; see \cite[Section I.3]{MacdonaldBook} and
\cite[Sections 4.4--6]{SaganBook} for more details.

Given $\mu = (\mu_1, \ldots, \mu_n) \in \Z^n$, the \emph{alternant} $a_\mu$ in the variables
$\bs{x} = (x_1, \ldots, x_n)$ is the alternating polynomial
\[
    a_\mu (\bs{x})
    =
    \det( x_i^{\mu_j} )_{1\leq i,j\leq n}.
\]
For $\delta = (n-1, n-2, \ldots, 0)$, the corresponding alternant $a_\delta(\bs{x})$
is called the \emph{Vandermonde determinant} and admits the factorization
\[
    a_\delta(\bs{x})
    =
    \det( x_i^{n-j})_{1\leq i,j \leq n}
    =
    \prod\limits_{1\leq j < k \leq n} x_j - x_k .
\]
In particular, it follows that $a_\delta(\bs{x})$ divides every alternating
polynomial in $\bs{x}$, and hence every alternant $a_\mu(\bs{x})$.

\begin{definition}
\label{def:SchurPoly}
Let $\rho = (\rho_1, \ldots, \rho_n) \in \Z^n$ with $\rho_1 \geq \rho_2 \geq \cdots \geq \rho_n$.
The \emph{Schur polynomial} associated to $\rho$ in the variables $x_1, \ldots, x_n$
is the symmetric polynomial defined by
\[
    s_\rho(\bs{x})
    =
    \frac{a_{\delta+\rho}(\bs{x})}{a_{\delta}(\bs{x})}
    =
    \frac{\det( x_i^{\rho_j + n - j})_{1\leq i,j \leq n}}
        {\det( x_i^{n-j})_{1\leq i,j \leq n}}.
\]
\end{definition}

Alternatively, the Schur polynomials can be characterized as follows.  Recall that a
\emph{generalized tableaux $T$ of shape $\mu = (\mu_1, \ldots, \mu_n) \in \Z^n$} is a left-justified
array with $n$ rows of lengths $\mu_i$ having positive integer entries $T_{i,j}$ with $1\leq i \leq n$
and $1\leq j \leq \mu_i$.  A generalized tableaux is \emph{semistandard} if its rows are nondecreasing
and its columns are increasing.  Given a generalized tableaux $T$ of shape $\mu$, let
\[
    \bs{x}^T
    =
    \prod\limits_{(i,j)\in \mu} x_{T_{i,j}}.
\]
That is, $\bs{x}^T$ is given by the product of all $x_t$ where $t$ ranges over the
entries of $T$.

\begin{lemma}
\label{lem:SchurYoung}
Let $\rho = (\rho_1, \ldots, \rho_n) \in \Z^n$ with $\rho_1 \geq \rho_2 \geq \cdots \geq \rho_n$.
Then
\[
    s_\lambda(\bs{x})   =   \sum\limits_T \bs{x}^T
\]
where the sum is over all semistandard $\lambda$-tableaux $T$ whose entries are elements of $\{ 1, \ldots, n \}$.
\end{lemma}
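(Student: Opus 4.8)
The plan is to prove the combinatorial (bialternant-vs-tableaux) description of Schur polynomials, i.e. that the ratio of alternants $a_{\delta+\rho}(\bs x)/a_\delta(\bs x)$ equals the generating function $\sum_T \bs x^T$ over semistandard $\rho$-tableaux with entries in $\{1,\dots,n\}$. This is the classical Jacobi--Trudi/bialternant identity, and since the paper only needs the consequence that Schur polynomials have nonnegative integer coefficients, even a reference-based treatment suffices; but I will sketch the standard self-contained argument. First I would set up the two families of symmetric functions: the alternant side $s_\rho = a_{\delta+\rho}/a_\delta$ from Definition \ref{def:SchurPoly}, and the tableau side $\widetilde s_\rho = \sum_T \bs x^T$, and show they coincide.

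The cleanest route is the \emph{lattice-path / Lindström--Gessel--Viennot} or, alternatively, the \emph{involution} approach. I would proceed as follows. Step one: expand the numerator $a_{\delta+\rho}(\bs x) = \det(x_i^{\rho_j+n-j})$ by the Leibniz formula as an alternating sum over $\mathcal S_n$ of monomials. Step two: expand the desired product $a_\delta(\bs x)\cdot\widetilde s_\rho(\bs x)$; here $a_\delta = \det(x_i^{n-j}) = \sum_{\sigma}\operatorname{sgn}(\sigma)\prod_i x_i^{n-\sigma(i)}$, and $\widetilde s_\rho$ is the sum over semistandard tableaux. Step three: interpret each term of the product $a_\delta\cdot\widetilde s_\rho$ as a weighted pair (a permutation-indexed monomial from $a_\delta$, a semistandard tableau $T$), or equivalently as a tuple of $n$ non-intersecting lattice paths after the standard shift $j\mapsto \rho_j+n-j$; step four: construct a sign-reversing, weight-preserving involution on the pairs that are \emph{not} non-intersecting (swap the tails of two paths past their first crossing), leaving as fixed points exactly the families of non-intersecting paths, which biject with semistandard tableaux of shape $\rho$. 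This yields $a_\delta\cdot\widetilde s_\rho = a_{\delta+\rho}$, hence $\widetilde s_\rho = s_\rho$.

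Concretely in the paper's language, I would (i) recall from Appendix \ref{ap:schur} that $a_\delta(\bs x) = \prod_{1\le j<k\le n}(x_j-x_k)$ is the Vandermonde and divides every alternant, so $s_\rho$ is a genuine polynomial; (ii) observe that $\widetilde s_\rho(\bs x) = \sum_T \bs x^T$ is manifestly symmetric (this requires the Bender--Knuth involutions on semistandard tableaux, which I would cite from \cite[Section 4.4--6]{SaganBook}); (iii) carry out the Lindström--Gessel--Viennot cancellation above to identify $a_\delta\widetilde s_\rho$ with $a_{\delta+\rho}$ term by term. Alternatively, one avoids (ii) entirely by noting that once $a_\delta\widetilde s_\rho=a_{\delta+\rho}$ is established as a polynomial identity, symmetry of $\widetilde s_\rho$ is automatic. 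Since all of this is entirely standard, I would in fact simply cite \cite[Section I.3]{MacdonaldBook} and \cite[Sections 4.4--6]{SaganBook} for the full argument and present only the reduction to the lattice-path picture.

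The main obstacle — really the only subtle point — is the bookkeeping of the shift $\rho_j \mapsto \rho_j + n - j$ that turns weakly decreasing partitions into strictly decreasing exponents, together with verifying that the non-intersecting path families are in \emph{bijection} (not merely surjection) with semistandard $\rho$-tableaux of entries $\le n$: one must check that the column-strictness and row-weak-increase conditions on $T$ correspond exactly to the non-crossing condition on the paths under this shift. Once that dictionary is in place, the sign-reversing involution is routine and the identity drops out; I expect the write-up to be short, deferring details to the cited references.
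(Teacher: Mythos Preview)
Your proposal is correct, and in fact gives considerably more than the paper does: the paper's ``proof'' of this lemma is a one-line citation to \cite[Corollary 4.6.2]{SaganBook}, noting that Sagan takes the tableau expansion as the \emph{definition} of $s_\lambda$ and then proves the bialternant formula. Your plan to sketch the Lindstr\"om--Gessel--Viennot argument and then defer to the same references is entirely compatible with this, just more detailed; for the purposes of the paper (which only needs the nonnegativity of the coefficients to deduce Corollary~\ref{cor:LaurentPositive}), the bare citation is all that is used.
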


For the proof, see \cite[Corollary 4.6.2]{SaganBook}, and note that Sagan uses this description
as the definition of the Schur polynomials.  In particular, note that is obvious from this definition
that $s_\lambda(\bs{x}) > 0$ for any $\lambda$ when $x_i > 0$, proving Corollary \ref{cor:LaurentPositive}.

\bibliographystyle{amsplain}
\bibliography{HS}

\end{document}